\newcommand{\comm}[1]{}
\newtheorem{Theorem}{Theorem}[section]
\newtheorem{Corollary}[Theorem]{Corollary}
\newtheorem{Lemma}[Theorem]{Lemma}
\newtheorem{Question}[Theorem]{Question}
\newtheorem{Proposition}[Theorem]{Proposition}
\newtheorem{Definition}[Theorem]{Definition}
\newtheorem{Remark}[Theorem]{Remark}
\numberwithin{equation}{section}
\DeclareMathOperator{\qu}{Quot_d(\mathcal{O}_{\mathbb{A}^{n}}^{\oplus \textit{r}})}
\DeclareMathOperator{\aff}{\mathbb{A}}
\DeclareMathOperator{\Gm}{\mathbb{G}_m}
\DeclareMathOperator{\Hilba}{Hilb_2(\mathbb{A}^n \times \mathbb{P}^{\textit{r}-1})}
\DeclareMathOperator{\qudwa}{Quot_2(\mathcal{O}_{\mathbb{A}^n}^{\oplus  \textit{r}})}
\DeclareMathOperator{\Qnrd}{Quot_d^{tr=0}(\mathcal{O}_{\mathbb{A}^{n}}^{\oplus \textit{r}})}
\DeclareMathOperator{\Qnrdin}{Quot_d^{tr=0}(\mathcal{O}_{\mathbb{A}^{\infty}}^{\oplus \textit{r}})}
\DeclareMathOperator{\qudwdw}{Quot_2(\mathcal{O}_{\mathbb{A}^2}^{\oplus  \textit{r}})}
\DeclareMathOperator{\quddw}{Quot_d(\mathcal{O}_{\mathbb{A}^2}^{\oplus  \textit{r}})}
\DeclareMathOperator{\qin}{Quot_d(\mathcal{O}_{\mathbb{A}^{\infty}}^{\oplus \textit{r}})}
\DeclareMathOperator{\qupludw}{Quot_d(\mathcal{O}_{\mathbb{A}^{n+2}}^{\oplus \textit{r}})}
\DeclareMathOperator{\quplu}{Quot_d(\mathcal{O}_{\mathbb{A}^{n+1}}^{\oplus \textit{r}})}
\DeclareMathOperator{\Yhat}{Y_{\textit{n,r}}^{tr=0}}
\DeclareMathOperator{\kk}{\Bbbk}
\DeclareMathOperator{\limi}{lim}
\DeclareMathOperator{\colim}{colim}
\DeclareMathOperator{\len}{length}
\DeclareMathOperator{\dime}{dim}
\DeclareMathOperator{\Supp}{Supp}
\DeclareMathOperator{\Hilb}{Hilb}
\DeclareMathOperator{\codime}{codim}
\DeclareMathOperator{\lendi}{End}
\DeclareMathOperator{\Hom}{Hom}
\DeclareMathOperator{\lker}{ker}
\DeclareMathOperator{\Grass}{Gr}
\DeclareMathOperator{\imag}{im}
\DeclareMathOperator{\Zmax}{Z_{\textit{n}}^{\textit{l}_{max}(\textit{r})}}
\DeclareMathOperator{\Znrl}{Z_{\textit{n}}^{\textit{l}}}
\DeclareMathOperator{\Znplurl}{Z_{\textit{n}+1}^{\textit{l}}}
\DeclareMathOperator{\Zinfrl}{Z_{\infty}^{\textit{l}}}
\DeclareMathOperator{\ZnrlwY}{\tilde{Z}_{\textit{n}}^{\textit{l}}}
\DeclareMathOperator{\ZnrlwYmax}{\tilde{Z}_{n}^{l_{max}(r)}}
\DeclareMathOperator{\quminus}{Quot_d(\mathcal{O}_{\mathbb{A}^{n-1}}^{\oplus \textit{r}})}
\DeclareMathOperator{\yuniv}{Y^{univ}_{\textit{n,r}}}
\DeclareMathOperator{\yunivplu}{Y^{univ}_{\textit{n+1,r}}}
\DeclareMathOperator{\yunivpludw}{Y^{univ}_{\textit{n+2,r}}}
\newcommand{\hooklongrightarrow}{\lhook\joinrel\longrightarrow}
\title{Cohomology of the Quot scheme of an infinite affine space}
\date{}
\author{Paweł Pielasa}
\address{University of Cambridge, Department of Pure Mathematics and Mathematical Statistics}
\email{pp554@cam.ac.uk}
\begin{document}

\begin{abstract}
    We study the Quot scheme of points $\qu$. We exhibit and compute the cohomology of explicit loci in $\qu$, whose complement has codimension diverging to infinity as $n\rightarrow \infty$. In the case $1<r<\frac{d+1}{2}$ this loci is an irreducible component. The main ingredient in our proof are classification results on maximal-dimensional spaces of commutative matrices satisfying certain generating conditions. Our primary motivation is the study of the ind-scheme \[ \qin := \underset{n\rightarrow \infty}{\colim}\qu . \]
    Finally, we compute the cohomology (with integral coefficients) of the Quot scheme $\qudwa$, confirming, in the case $d=2$, a conjecture of Pandharipande.
\end{abstract}

\maketitle

\tableofcontents

\section{Introduction}

\subsection{The Quot scheme of points and its colimit}

Given a $\kk$-scheme $X$ of finite type and a coherent sheaf $\mathcal{E}$ on $X$ there exists a Quot scheme $\mathrm{Quot}_{\mathcal{E}/X}$ parametrizing quotients of $\mathcal{E}$. We are often interested in studying a component of the scheme above corresponding to quotients of a fixed length. A perhaps most fundamental instance of this construction is the Quot scheme of points on $\aff^n$, denoted by
\[ \qu , \] which parametrizes finite length quotients of the $r$-th direct sum the structure sheaf on $\mathbb{A}^n$. Any Quot scheme of length $d$ quotients of a locally free sheaf $\mathcal{E}$ on a smooth scheme $X$ étale locally looks like $\qu$, making the latter a natural object of study for understanding the geometry and singularities of even more general Quot schemes. In the language of modules, a $\kk$-point of $\qu$ is defined as an equivalence class of surjections
\[ \kk[x_1,\dots,x_n]^{\oplus r} \twoheadrightarrow M \]
with $M$ a length $d$ module. This suggests that the Quot scheme $\qu$ carries algebraic information about the modules it parametrizes and studying its structure can lead to a deeper outlook on the theory of modules.\\
Quot schemes have been studied extensively ever since their introduction by Grothendieck, see for example \cite{ogolnequot1}, \cite{ogolnequot2}, \cite{ogolnequot3}, \cite{ogolnequot4}, \cite{ogolnequot5}, \cite{ogolnequot6}. In the work of \cite{JS22}, their geometry has been linked more closely to the geometry of varieties of commuting matrices. This outlook will be especially useful to us, allowing us to employ tools from linear and commutative algebra to study the geometry of $\qu$.\\

Say we are given a family of moduli spaces $\mathcal{X}_n$ depending on a parameter $n$, where $n$ signifies the dimension of some ambient space, bound to some invariant etc. One of the fundamental questions to ask when trying to understand such a moduli space is: what are its properties for large enough $n$? In the case of the projective space $\mathbb{P}^n$, or the Grassmanian $\Grass(d,n)$ for $d\leq n$, we have natural embeddings from the space with parameter $n$ to the space with parameter $n+1$, that is
\[ \Grass(d,n) \overset{\iota_n}{\hookrightarrow} \Grass(d,n+1),\ \text{ or }\ \mathbb{P}^n \overset{\iota_n}{\hookrightarrow} \mathbb{P}^{n+1}.\]
The direct limits
\[ \Grass(d,\infty) := \underset{n \rightarrow \infty}{\colim}\ \Grass(d,n), \text{ respectively } \mathbb{P}^{\infty} :=  \underset{n \rightarrow \infty}{\colim}\ \mathbb{P}^{n}\]
under the sequences of embeddings above are interesting spaces in their own right, parametrising $d$-dimensional subspaces (respectively, lines going through the origin) in vector spaces of arbitrary high dimension. They both play significant roles in algebraic geometry, being the classifying spaces of rank $d$ vector bundles (respectively, line bundles). Intuitively, the colimit construction glues the schemes $X_n$ along the prescribed embeddings and eliminates the constraint of parametrising objects lying in some space of bounded dimension (necessarily sacrificing the finite dimensionality of the scheme in the process). It is often the case that the cohomology of the space $\mathcal{X}_n$ we start from agrees with the cohomology of the colimit $\underset{n\rightarrow \infty}{\colim}\ \mathcal{X}_n$ in some stable range. Then the cohomology of such an ind-scheme can be considered as the stable cohomology of the sequence of spaces $\mathcal{X}_n$. Its study is an important and fruitful area of research, enhancing the understanding of the structure of the bounded moduli spaces themselves. It is often better behaved than the cohomology of the individual schemes in the colimit, for example
\[ H^*(\Grass(d,\infty),\mathbb{Z}) \simeq \mathbb{Z}[c_1,\dots,c_d], \]
where $c_i$ is a generator in degree $2i$ for $i=1,\dots,d$.\\

There has been significant progress made in the study of the stable cohomology of the moduli of curves $\mathcal{M}_g$, ever since the conjecture of Mumford \cite{Mumford83}, stating that the rational cohomology of $\mathcal{M}_g$ stabilizes as $g\rightarrow \infty$ to
\[ \mathbb{Q}[\kappa_1,\kappa_2,\dots], \]
where the $\kappa$-class $\kappa_i$ lies in degree $2i$ for $i=1,\dots,g$. Explicitly, the class $\kappa_i$ is defined as a pushforward $\pi_*(c_1(\omega_{\mathcal{C}/\mathcal{M}_g})^{i+1})$, where $\mathcal{C}$ is the universal curve over $\mathcal{M}_g$, and $\omega_{\mathcal{C}/\mathcal{M}_g}$ is the relative canonical. It was proved in \cite{Harermodulicurves} that the cohomology indeed stabilizes, in \cite{loopspace} that it in fact stabilizes to a cohomology of some infinite loop space, and the conjecture was finally resolved in \cite{mumfordconjproof}. However, understanding the stabilization maps is slightly more cumbersome. Namely, we use that the cohomology of $\mathcal{M}_g$ agrees with the cohomology of the moduli of curves with one boundary component $\mathcal{M}_g^1$ in some stable range \cite{Harermodulicurves}. Then the maps can be defined on the level of $\mathcal{M}_g^1$ and the map between $\mathcal{M}_g^1$ and $\mathcal{M}_{g+1}^1$ is the so called Harer stability map\footnote{We thank Sam Payne for explaining this during the Workshop on Combinatorics of Enumerative Geometry}. There has also been significant progress made in understanding the cohomology of $\mathcal{M}_g$ outside of the stable range, see \cite{Payneweight2}, \cite{Payneweight11}. Similar questions have been studied for many other moduli spaces, including Hurwitz spaces \cite{Aaronhurwitz}, moduli of degree $d$ branched covers of the square \cite{squarecover}, sheaves on surfaces \cite{modulisurfaces}, etc.\\

A particularly beautiful instance of the general problem above has been studied in \cite{HJ25} when $\mathcal{X}_n$ is the Hilbert scheme of points on $\aff^n$, denoted by $\Hilb_d(\mathbb{A}^n)$. This Hilbert scheme is a special instance of the Quot scheme of points $\qu$, when $r=1$. Its $\kk$-points are equivalence classes of length $d$ algebras $A$ together with a surjection
\[ \kk[x_1,\dots,x_n] \twoheadrightarrow A . \]
The embedding
\[ \Hilb_d(\mathbb{A}^n) \overset{\iota_n}{\hooklongrightarrow} \Hilb_d(\mathbb{A}^{n+1}) \]
is given by considering a quotient $\kk[x_1,\dots,x_n] \twoheadrightarrow A$ as a quotient of $\kk[x_1,\dots,x_{n+1}]$, where the variable $x_{n+1}$ is mapped to zero, and the surjection is the same on the subring generated by the first $n$ variables. The authors of \cite{HJ25} proved that the colimit
\[ \Hilb_d(\aff^{\infty}) := \underset{n\rightarrow \infty}{\colim} \Hilb_d(\mathbb{A}^n) \]
is $\aff^1$-homotopy equivalent to the infinite Grassmanian of $(d-1)$-planes $\Grass(d-1,\infty)$, which is a condition guaranteeing an isomorphism of many algebraic invariants, including cohomology, algebraic cobordism, homotopy invariant $K$-theory etc. In particular
\[ H^*(\Hilb_d(\aff^{\infty})) \simeq \mathbb{Z}[c_1,\dots,c_{d-1}]. \]
The main ingredient of their proof is a Rees algebra construction, which exhibits an $\aff^1$-homotopy deforming any length $d$ algebra to a local algebra with square zero maximal ideal. This was used to construct an equivalence of the stack $\mathcal{F}\mathcal{F}lat_d$ of finite flat length $d$ algebras, to the stack $\mathcal{V}ect_{d-1}$ of rank $d$ locally free sheaves. Explicitly, the morphism between the stacks sends a rank $d-1$ vector bundle $\mathcal{V}$ on $X$ to the sheaf of algebras given by the square-zero extension $\mathcal{O}_X \oplus \mathcal{V}$, that is with multiplication satisfying $\mathcal{V}^2=0$ \cite[Theorem 2.1]{HJ25}.\\
It is natural to ask if we can extend those results to the case of arbitrary $r$ and find the cohomology of the colimit of the Quot schemes of points
\[ \qin := \underset{n\rightarrow\infty}{\colim}\ \qu \]
with respect to an analogous sequence of embeddings
\[ \qu \overset{\iota_n}{\hooklongrightarrow} \quplu. \]
It turns out that the approach similar to \cite{HJ25} fails, as there is no analogue of the Rees algebra in the more general setting. Nevetherless we believe that other approaches may yield the general answer. In regard to this problem, the following question was posed by Pandharipande:
\begin{Question}[{\cite[Rahul Pandharipande]{Rahul}}] \label{panda}
    Consider the colimit
    \[ \qin := \underset{n \rightarrow \infty}{\colim} \qu . \]
    Is it the case that
    \[ H^*(\qin) \simeq \mathbb{Z}[c_1,\dots,c_d]/(c_d^r) \]
    where the variable $c_i$ has degree $2i$ for $i=1,\dots,d$?
\end{Question}
The case $r=1$ holds by the aforementioned work of \cite{HJ25}.\\
%In the future, we would be interested in identifying explicit candidates for the $c_i$ classes, for example, as certain Chern classes of the universal quotient bundle $\mathcal{Q}$ on $\qu$. \\
In this work we use classical cohomological techniques to obtain results about components in $\qu$, whose complement has codimension converging to infinity with $n \rightarrow \infty$. In light of purity theorems in algebraic geometry and motivic homotopy theory, this makes them relevant for understanding the general answer. Furthermore, we compute the cohomology of $\qu$ and $\qin$ in the case $d=2$, confirming the special case of the conjecture above.

\subsection{Outline of our work}

In Section \ref{Ch2prelim} we outline the preliminaries to this work. We start by defining the Quot scheme of points and the morphism $\pi: \yuniv \rightarrow \qu$, which will allow us to define open, respectively closed, subsets on $\qu$ using subsets of a better understood scheme $\yuniv$ invariant under a certain group action. We introduce the resolution of singularities of the Quot scheme $\qudwa$, following \cite{S24}. We describe a version of the Białynicki-Birula decompositon of \cite{HV15}, which we use in our cohomology computations for $\qudwa$.\\
%We describe tools which we will use for computing cohomology in this article, including the Leray-Hirsch theorem \cref{leray}, the long exact sequence of an abstract blowup \cref{blowupseq}, and the Białynicki-Birula decomposition \cref{ABBdointro}.\\

In Section \ref{sec1} we analyse the dimensions of loci in $\qu$ determined by the dimension of the linear span in $\lendi(M)$ of the endomorphisms defined by the action of the variables $x_i$ on the module $M$. We prove that one of this loci has complement of codimension diverging to infinity for $n\rightarrow \infty$.\\

In Section \ref{sec2subspaces} we study the commutative subspaces of $\lendi(V)$ for a $d$-dimensional space $V$ satisfying a certain spanning condition for a fixed number $1\leq r \leq d$. The condition arises from the surjectivity of a homomorphism $\kk[x_1,\dots,x_n]^{\oplus r} \twoheadrightarrow M$, defining a point of $\qu$. We cite the classical theorems of Schur and Jacobson in the $r \geq \frac{d}{2}$ case. In the case $1 < r < \frac{d+1}{2}$ we give new classification results, after introducing a number of technical lemmas from commutative and linear algebra. Lastly, we present a classification of subspaces of commutative endomorphisms in the case $d \leq 3$.\\

In Section \ref{sec3coh} we use the results of Section \ref{sec2subspaces} to explicitly describe certain loci in $\qu$, which by the results of Section \ref{sec1} have complement of codimension diverging to infinity for $n \rightarrow \infty$. We compute their cohomology using the Białynicki-Birula decomposition by interpreting the surjection $\kk[x_1,\dots,x_n]^{\oplus r} \twoheadrightarrow M$ as some geometric data. We hope that in the future, we will be able to use those results to compute the cohomology of $\qin$ for general $d$, at least in the case $1<r<\frac{d+1}{2}$. \\

In Section \ref{sec4quot2} we compute the Hilbert-Poincaré series of the cohomology ring of $\qudwa$. Our calculation confirms the conjecture of Pandharipande (Question \ref{panda}) in the cases $d=1$ and $d=2$. We reduce our computation from the singular Quot scheme to the smooth scheme $\Hilb_2(\aff^n \times \mathbb{P}^{r-1})$ using a resolution of singularities given in \cite{S24} and the long exact sequence in cohomology for abstract blowups. We then use a refined version of the Białynicki-Birula decomposition introduced in \cite{HV15} to find a cell decomposition of the subscheme of $\Hilb_2(\aff^n \times \mathbb{P}^{r-1})$ whose embedding induces an isomorphism on cohomology.

\subsection{Acknowledgements}

This article is based on the author's Bachelor thesis at the University of Warsaw, supervised by Joachim Jelisiejew. I would like to express my deepest gratitude to Joachim Jelisiejew for enlightening discussions on the subject and countless, invaluable remarks to help improve the previous drafts of the article.

\section{Preliminaries and notation} \label{Ch2prelim}

\subsection{Notation}
Throughout this work all schemes considered are $\kk$-schemes over a fixed algebraically closed field $\kk$ of characteristic zero. The assumption that $\kk$ is algebraically closed is used in the proof of Theorem \ref{proofkohRrn} and the study of $\qu$ when $d = 2$, more specifically for the classification of length $2$ modules and applying the Białynicki-Birula decomposition to $\qudwa$ in Section \ref{sec4quot2}.
%We denote by $\mathbb{A}^n$ the $n$-dimensional affine space over $\kk$ and by $\mathcal{O}_{\mathbb{A}^n}^{\oplus r}$ the $r$-th direct sum of its structure sheaf.
Let $S$ be the polynomial ring $\kk[x_1,\dots,x_n]$ in $n$ variables. We denote by $S^{\oplus r}$ the rank $r$ free module on $S$, and by $e_1,\dots,e_r$ its canonical basis. We denote by $\Grass(a,b)$ for $a \geq b$ the Grassmanian of $b$-dimensional quotients of an $a$-dimensional vector space. We denote by $\Grass(a,\infty)$ the infinite Grassmanian of $a$-planes. Unless stated otherwise, the cohomology $H^*(X)$ is assumed to be with integral coefficients.

\subsection{Quot schemes of points}

In this section we recall the notions fundamental to the study of the Quot Scheme $\qu$.

%\begin{Definition}
 %   For a $\kk$-schemes $X$, $T$, we denote by $X_T$ the base change of $X$ to $T$, that is $X_T := X \times_{\mathrm{Spec}(\kk)} T$. For a sheaf $\mathcal{E}$ on $X$ we denote by $\mathcal{E}_T$ its pullback to $T$. If $T = \mathrm{Spec}(R)$ is an affine $\kk$-scheme, then we denote the pullback by $\mathcal{E}_R$.
%\end{Definition}

\begin{Definition}
    Let $X$ be a quasi-projective $\kk$-scheme and let $\mathcal{E}$ be a quasi-coherent sheaf on $X$. We denote by $\mathrm{Quot}_{\mathcal{E}/X}$ the functor $Sch_{\kk} \rightarrow Set$ parametrizing quotients of $\mathcal{E}$. Explicitly, for a $\kk$-scheme $T$:
        \[
    \mathrm{Quot}_{\mathcal{E}/X}(T) = \left\{ 
    \mathcal{E}_T \twoheadrightarrow \mathcal{F} \;\middle|\;
    \begin{array}{l}
    \mathcal{F} \in \mathrm{QCoh}(X_T),\ \mathcal{F} \text{ is of finite presentation,} \\
    \mathcal{F} \text{ is flat over } T,\ \mathrm{\Supp}(\mathcal{F}) \text{ is proper over } T
    \end{array}
    \right\} \biggl/ \sim
    \]
    %\begin{align*}
        %Quot_{\mathcal{E}/X}(S) = \Biggl\{ \mathcal{E}_S \twoheadrightarrow \mathcal{F}\ \bigg| \begin{array}
           % \ \mathcal{F} \in QCoh(X_S),\ \mathcal{F}\text{ is of finite presentation}, \\
            %  \Supp(\mathcal{F}) \text{ is proper over S},\ \mathcal{E}_S \twoheadrightarrow \mathcal{F} \text{ is a surjection}
        %\end{array} \Biggr\} \biggl/ \sim
    %\end{align*}
    where $\sim$ denotes the obvious equivalence relation.
    %identifying quotients differing by an isomorphism commuting with the surjection from $\mathcal{E}_T$. Explicitly, $\pi_1: \mathcal{E}_T \twoheadrightarrow \mathcal{F}_1$ is identified with $\pi_2: \mathcal{E}_T \twoheadrightarrow \mathcal{F}_2$ if and only if there exist an isomorphism $\sigma : \mathcal{F}_1 \xrightarrow{\sim} \mathcal{F}_2$ such that $\pi_2 = \sigma \circ \pi_1$.
\end{Definition}

%Equivalently, the Quot scheme can be thought of as parametrizing kernels of the surjections above, under this interpretation we need not divide by any equivalence. It is a nontrivial result of Grothendieck, that the functor described above is indeed represented by a scheme \cite[Theorem 5.20]{N05}, which we will also denote by $\mathrm{Quot}_{\mathcal{E}/X}$. We are interested in a special case of the construction above. Let us recall the following basic fact from algebraic geometry.
%\begin{Lemma}[{\cite[6.1.1]{FOAG}}]
    Quasi-coherent sheaves over an affine scheme $\mathrm{Spec}(R)$ are in a one to one correspondence with modules over the ring $R$, with the correspondence being given by the global sections functor.
%\end{Lemma}
%Thus, the Quot scheme $\mathrm{Quot}_{\mathcal{O}_{\mathbb{A}^n}^{\oplus r}/\mathbb{A}^n}$ parametrizes the quotients of the module $S^{\oplus r}$.
%\begin{Definition} \label{deflength}
   % The \textit{length} of a module $M$ over a ring $R$ is defined as the largest $d$ such that there exists a sequence of $R$-submodules \[0 = M_0 \subsetneq M_1 \subsetneq \ldots \subsetneq M_d = M . \]
%\end{Definition}
Our main object of study is the subscheme of the Quot scheme of $\mathcal{E} = \mathcal{O}_{\mathbb{A}^n}^{\oplus r}$ on $X = \mathbb{A}^n$  consisting of quotients of fixed length $d$.
\begin{Definition}
    We denote by $\qu$ the subscheme of $\mathrm{Quot}_{\mathcal{O}_{\mathbb{A}^n}^{\oplus r}/\mathbb{A}^n}$ parametrizing quotients of length $d$. Explicitly, for a $\kk$-algebra $R$:
    \begin{align*}
        \qu(R) = \Biggl\{ S_R^{\oplus r} \twoheadrightarrow M\ \bigg|
        \begin{array}{ccc}
            M \in Mod_{R[x_1,\dots,x_n]},\ S^{\oplus r}_R \twoheadrightarrow M \text{ is a surjection,} \\ 
            M \text{ is a locally free }R \text{-module of rank }d
        \end{array}
        \Biggr\} \biggl/ \sim
    \end{align*}
    where $\sim$ denotes the obvious equivalence relation. %that identifies $\pi_1: R[x_1,\dots,x_n]^{\oplus r} \twoheadrightarrow M_1$ with $\pi_2: R[x_1,\dots,x_n]^{\oplus r} \twoheadrightarrow M_2$ if and only if there exist an isomorphism $\sigma : M_1 \xrightarrow[]{\sim} M_2$ such that $\pi_2 = \sigma \circ \pi_1$.
\end{Definition}
%Since all $\kk$-vector spaces are free as $\kk$-modules, 
The $\kk$-points of $\qu$ correspond to the quotients of $S^{\oplus r}$ that have dimension $d$ as a $\kk$-vector space.
%The module-theoretic definition above can be extended to more general locally trivial sheaves $\mathcal{E}$. An equivalent definition, via parametrizing kernels, is:
%    \[
%        \qu(R) = \Biggl\{ K \subset S_R^{\oplus r} \ \bigg|
%        \begin{array}{cc}
%             K \text{ is an } S_R\text{-submodule of }S_R^{\oplus r} \\
%              \ S_R^{\oplus r}/K \text{ is locally free }R\text{-module of rank }d
%        \end{array} \Biggr\}.
%    \]

Fix a $d$-dimensional $\kk$-vector space $V$. Understanding the topology of $\qu$ using Grothendieck's construction can be somewhat cumbersome. We instead use a morphism from another scheme, which we denote by $\yuniv$ and which has a straightforward description as a subscheme of $(\lendi (V))^{\times n} \times V^{\times r}$.

\begin{Definition}
    Fix a $d$-dimensional $\kk$-vector space $V$. We define $\yuniv$ to be the subscheme of $\lendi (V)^{\times n} \times V^{\times r}$ given by tuples $((X_1,\dots,X_n),(v_1,\dots,v_r))$ such that $X_i$ are pairwise commutative and $k[X_1,\dots,X_n] U=V$ for $U = \langle v_1,\dots,v_r \rangle$.
\end{Definition}

We can consider projections $pr_1: \yuniv \rightarrow \lendi (V)^{\times n}$ and $pr_2: \yuniv \rightarrow V^{\times r}$ induced by the respective projections from $\lendi (V)^{\times n} \times V^{\times r}$. There is an action of $GL(V)$ on $(\lendi (V))^{\times n} \times V^{\times r}$ given by
\begin{align*}
    \big(g,\big((X_1,\dots,X_n),(v_1,\dots,v_r)\big)\big) \mapsto \big((gX_1g^{-1},\dots,gX_ng^{-1}),(gv_1,\dots,gv_r)\big), 
\end{align*}
which preserves $\yuniv$. Any $\kk$-point of $\yuniv$ gives rise to a $\kk$-point of $\qu$ defined as the unique quotient $S^{\oplus r} \twoheadrightarrow V$ of $S$-modules sending $e_j \mapsto v_j$ for $j=1,\dots,r$ where the variables $x_j$ of $S$ act on $V$ by $X_j$. The commutativity condition ensures that this is gives a structure of an $S$-module on $V$ and the spanning condition ensures that the homomorphism is a surjection. In fact, this construction defines a morphism of schemes that is very well behaved. More concretely:
\begin{Proposition}[{\cite[Proposition 3.7]{JS22}}] \label{defpi}
    There is a smooth, surjective morphism
    \[ \pi: \yuniv \rightarrow \qu \]
    defined on $\kk$-points as above.
\end{Proposition}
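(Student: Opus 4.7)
The plan is to recognise $\pi$ as the quotient morphism for a free $GL(V)$-action on $\yuniv$, so that smoothness follows from the general fact that a torsor under a smooth group scheme is a smooth morphism, while surjectivity reduces to a direct check on $\kk$-points.

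First I would upgrade the recipe in the statement to a natural transformation of functors of points. Given a $\kk$-scheme $T$ and a $T$-valued point of $\yuniv$, that is, pairwise commuting endomorphisms $X_1,\dots,X_n$ of $V_T := V \otimes_{\kk} \mathcal{O}_T$ together with sections $v_1,\dots,v_r \in V_T$ satisfying the generation condition, one defines an $\mathcal{O}_T[x_1,\dots,x_n]$-module structure on $V_T$ by letting $x_i$ act as $X_i$, and a surjection $S_T^{\oplus r} \twoheadrightarrow V_T$ by $e_j \mapsto v_j$. Since $V_T$ is free of rank $d$ over $\mathcal{O}_T$ and the spanning hypothesis is exactly the surjectivity of this map, the result is a $T$-valued point of $\qu$; naturality in $T$ gives a morphism $\pi$ of schemes.

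For surjectivity on $\kk$-points, given any class $[S^{\oplus r} \twoheadrightarrow M] \in \qu(\kk)$, I would choose a $\kk$-linear isomorphism $\phi: M \xrightarrow{\sim} V$ and transport the data, setting $X_i := \phi \circ (x_i \cdot) \circ \phi^{-1}$ and $v_j := \phi(\bar e_j)$ where $\bar e_j$ denotes the image of $e_j$ in $M$. The tuple $((X_1,\dots,X_n),(v_1,\dots,v_r))$ satisfies the defining conditions of $\yuniv$ and its image under $\pi$ is the original quotient modulo the equivalence relation on $\qu$.

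For smoothness I would exhibit $\pi$ as a $GL(V)$-torsor. The $GL(V)$-action described in the excerpt preserves the fibres of $\pi$: simultaneous conjugation of the $X_i$ by $g$ combined with $v_j \mapsto g v_j$ produces an isomorphic quotient. Conversely, two lifts of the same quotient in $\qu(\kk)$ differ by the unique $g \in GL(V)$ identifying the two $S$-module structures on $V$, so the action on each fibre is simply transitive. The same argument runs in families, so $\pi$ realises $\qu$ as the fppf (in fact \'etale, since $GL(V)$ is special in the sense of Serre) quotient $\yuniv / GL(V)$; the quotient map for a free action of a smooth affine group is itself smooth, of relative dimension $\dim GL(V) = d^2$. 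The main obstacle, and the point treated by the cited Grothendieck-style construction in \cite{JS22}, is checking that the naive fppf quotient functor is represented by the Quot scheme; once this representability is in hand, the torsor structure and hence smoothness are formal, and surjectivity is the $\kk$-point check above.
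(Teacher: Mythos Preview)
The paper does not supply its own proof of this proposition; it is quoted from \cite[Proposition~3.7]{JS22} without argument. Your sketch is the standard one and is correct in outline: one checks surjectivity on $\kk$-points by transporting structure along a chosen isomorphism $M\simeq V$, and smoothness by recognising $\pi$ as a $GL(V)$-torsor, since the action is free (any $g$ fixing a point $((X_i),(v_j))$ must fix every $v_j$ and commute with every $X_i$, hence fix all of $V$ by the generation hypothesis) and transitive on fibres. You are also right that the substantive content is identifying the fppf quotient $\yuniv/GL(V)$ with $\qu$; this is exactly what the cited proposition in \cite{JS22} establishes, so your proposal is faithful to how the result is actually proved there.
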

In the other direction, if for a quotient $S^{\oplus r} \twoheadrightarrow M$ we fix an isomorphism $M \simeq V$ of $\kk$-vector spaces, we can define a point of $\yuniv$ by the inverse of the above construction.
\begin{Proposition}[{\cite[Lemma A.2(ii)]{BaranovskyADHM}}]
    The points in the fiber of $\pi$ over $[M] \in \qu$ differ by a change of basis in $V$. More precisely, let $y$ be any element such that $\pi(y) = [M]$. Then $\pi^{-1}([M])$ is exactly the orbit of $y$ under the $GL(V)$-action on $\yuniv$.
\end{Proposition}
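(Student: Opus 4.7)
The plan is to show the two inclusions $\pi^{-1}([M]) \supseteq GL(V)\cdot y$ and $\pi^{-1}([M]) \subseteq GL(V)\cdot y$ by unwrapping the definition of equivalence in the Quot scheme and matching it with the formula for the $GL(V)$-action on $\yuniv$.

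First I would handle the easier direction, namely that $GL(V)$ acts within the fiber. Given $y = ((X_1,\dots,X_n),(v_1,\dots,v_r))$ and $g \in GL(V)$, the point $g\cdot y$ defines a surjection $S^{\oplus r} \twoheadrightarrow V$ sending $e_j \mapsto gv_j$, where $x_i$ acts by $gX_ig^{-1}$. I would observe that $g : V \to V$ is then an $S$-module isomorphism between the two structures, since $g \circ X_i = (gX_ig^{-1}) \circ g$, and it intertwines the two surjections because it sends $v_j$ to $gv_j$. Hence $\pi(g\cdot y) = \pi(y) = [M]$.

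For the converse, suppose $y_1 = ((X_1,\dots,X_n),(v_1,\dots,v_r))$ and $y_2 = ((X_1',\dots,X_n'),(v_1',\dots,v_r'))$ both satisfy $\pi(y_i) = [M]$. Each $y_i$ endows the fixed vector space $V$ with the structure of an $S$-module (via $X_i$ or $X_i'$) and yields a surjection $q_i : S^{\oplus r} \twoheadrightarrow V$ with $q_i(e_j) = v_j$, respectively $v_j'$. Equality of the two equivalence classes in $\qu$ amounts, by definition of the Quot functor, to the existence of an $S$-module isomorphism $\phi : V \to V$ with $\phi \circ q_1 = q_2$. Since both source and target have the same underlying $\kk$-vector space $V$ and $\phi$ is a $\kk$-linear bijection, $\phi$ is an element of $GL(V)$. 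The $S$-linearity of $\phi$ means precisely that $\phi X_i = X_i' \phi$, i.e.\ $X_i' = \phi X_i \phi^{-1}$, and the compatibility $\phi \circ q_1 = q_2$ applied to $e_j$ gives $v_j' = \phi(v_j)$. Comparing with the action formula, this is exactly $y_2 = \phi \cdot y_1$.

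There is no real obstacle here beyond carefully tracking the data: the only thing one needs to note is that the isomorphism produced by the definition of the Quot scheme a priori lives between two abstract $S$-modules, but in our parametrisation both modules share the fixed underlying vector space $V$, so the isomorphism automatically becomes an element of $GL(V)$. Proposition \ref{defpi} and the description of $\pi$ on $\kk$-points, already recalled above, supply all the ingredients; the statement is essentially a translation between the two descriptions of the same equivalence relation.
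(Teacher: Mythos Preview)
The paper does not supply its own proof of this proposition; it is quoted directly from \cite{BaranovskyADHM} without argument. Your proof is correct and is exactly the standard unwinding of the definitions: the equivalence relation defining the Quot functor matches, on the underlying fixed vector space $V$, the orbit relation for the $GL(V)$-action on $\yuniv$.
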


Since smooth morphisms are open \cite[Tag01V4]{stacks-project}, $\pi$ can be used to define open sets in $\qu$ as images of open sets in $\yuniv$. To define closed subsets of $\qu$ using $\pi$, we introduce the following.
\begin{Definition} \label{definvariant}
    We call a subset $Z$ of $\yuniv$ \textit{invariant} if its intersection with every fiber of $\pi$ is either empty or the whole fiber. Equivalently, $Z$ is preserved by the $GL(V)$ action on $\yuniv$.
\end{Definition}
The condition above holds for the subsets of $\qu$ defined invariantly with respect to the choice of basis of $V$, for example by the dimensions of $\lker(X_i)$, $\langle X_1, \dots, X_n \rangle \subset \lendi (V)$, etc. For conciseness we often refer to such conditions as \textit{invariant}, especially when considering them to define loci of $\qu$ with the use of the following proposition.
\begin{Proposition} \label{pipreservesclosed}
    Let $Z$ be a closed invariant subset of $\yuniv$. Then its image under $\pi$ is closed.
\end{Proposition}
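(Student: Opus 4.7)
The plan is to exploit two properties of $\pi$: it is smooth (hence open by \cite[Tag01V4]{stacks-project}, as already noted in the excerpt) and surjective, and to combine them with the invariance hypothesis on $Z$. Rather than trying to show directly that $\pi(Z)$ is closed, I will show that its complement $\qu \setminus \pi(Z)$ is open by identifying it with the image under $\pi$ of the open set $\yuniv \setminus Z$.

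The first step is to establish the set-theoretic equality
\[
\qu \setminus \pi(Z) \;=\; \pi(\yuniv \setminus Z).
\]
The inclusion $\supseteq$ is the one requiring the invariance of $Z$: if $x = \pi(y)$ for some $y \notin Z$, and if we had $x \in \pi(Z)$, then some $z \in Z$ would satisfy $\pi(z) = x = \pi(y)$, placing $y$ and $z$ in the same fibre of $\pi$. By the preceding proposition, that fibre is a single $GL(V)$-orbit; since $Z$ is invariant and meets this orbit at $z$, it contains the whole orbit, contradicting $y \notin Z$. The inclusion $\subseteq$ uses only surjectivity of $\pi$: any $x \in \qu \setminus \pi(Z)$ has a preimage, which cannot lie in $Z$.

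Given this equality, the rest is immediate. The set $\yuniv \setminus Z$ is open by hypothesis, and smooth morphisms are open, so $\pi(\yuniv \setminus Z)$ is open in $\qu$. Hence $\qu \setminus \pi(Z)$ is open, i.e.\ $\pi(Z)$ is closed.

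The only subtle point is the invariance argument in the first inclusion above; this is where the hypothesis that $Z$ is preserved by the $GL(V)$-action (equivalently, is a union of fibres of $\pi$) is used essentially. Everything else is formal from smoothness plus surjectivity of $\pi$, both of which are provided by \Cref{defpi}. No appeal to faithfully flat descent or to properties of the $GL(V)$-quotient is needed.
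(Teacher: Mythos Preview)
Your argument is correct and is essentially identical to the paper's: both establish the equality $\pi(\yuniv\setminus Z)=\qu\setminus\pi(Z)$ using invariance and surjectivity, then conclude via openness of the smooth map $\pi$. Your write-up is simply more detailed in spelling out the two inclusions.
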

\begin{proof}
    The image of the complement of an invariant subset is equal to the complement of its image. Thus the complement of the image of $Z$ is open, so the image of $Z$ is closed.
\end{proof}
 Combining the above proposition with the previous discussion, we see that the image of a locally-closed invariant subset of $\yuniv$ is locally closed in $\qu$.\\

Understanding the linear spans $\langle pr_1(y) \rangle \subset \lendi(V)$ for $y \in \yuniv$ is a key component of this work. To make the proofs more concise, we introduce the following two notions defining properties of a subspace of $\lendi(V)$, which are satisfied for all points of the above form.
\begin{Definition}\label{defcommute}
    We say that a subspace $A\subset \lendi(V)$ is commutative if it consists of pairwise commutative endomorphisms.
\end{Definition}
\begin{Definition}\label{defrspanning}
    We say that a subspace $A\subset \lendi(V)$ is $r$-spanning if there exists a subspace $U \subset V$ od dimension at most $r$ such that $A\cdot U = V$. We sometimes refer to this as the $r$-spanning condition, or simply the spanning condition when the value of $r$ is clear from the context.
\end{Definition}

Consider a sequence of embeddings of affine spaces
\begin{align*}
    \ldots \overset{\iota_{n-1}}{\hooklongrightarrow} \mathbb{A}^{n} \overset{\iota_{n}}{\hooklongrightarrow} \mathbb{A}^{n+1} \overset{\iota_{n+1}}{\hooklongrightarrow} \mathbb{A}^{n+2} \overset{\iota_{n+2}}{\hooklongrightarrow} \dots
\end{align*}
induced by the surjections of polynomial rings $\kk[x_1,\dots,x_{n+1}] \twoheadrightarrow \kk[x_1,\dots,x_n]$ sending $x_j \mapsto x_j$ for $j\leq n$ and $x_{n+1} \mapsto 0$. The main motivation for our considerations is studying the colimit of embeddings of Quot schemes under the induced maps
\[ (\iota_n)_*: \qu \hookrightarrow \quplu. \]
More precisely, since the pushforward with respect to a closed embedding is an exact functor, any closed immersion $f: X \rightarrow Y$ induces a morphism of Quot schemes
\begin{align*}
    \mathrm{Quot}_{\mathcal{E}/X}(S) \overset{f_*}{\longrightarrow} \mathrm{Quot}_{f_*\mathcal{E}/Y}(S).
\end{align*}
%given by sending
%\begin{align*}
    %\mathcal{E}_S \twoheadrightarrow \mathcal{F} \ \text{ to } \ f_*\mathcal{E} \twoheadrightarrow f_*\mathcal{F}.
%\end{align*}
Since the pushforward with respect to a closed immersion is a conservative functor the induced morphism between Quot schemes is an embedding.
In our case the embeddings $\iota_n$ induce the embeddings
\begin{align} \label{embeddings}
    \ldots \overset{(\iota_{n-1})_*}{\hooklongrightarrow} \qu \overset{(\iota_{n})_*}{\hooklongrightarrow} \quplu \overset{(\iota_{n+1})_*}{\hooklongrightarrow} \qupludw \overset{(\iota_{n+2})_*}{\hooklongrightarrow} \dots .
\end{align}
To make the notation more transparent, in the subsequent considerations we denote the induced embeddings between the Quot schemes also by $\iota_n$. The embedding can be described explicitly as sending an element $\kk[x_1,\dots,x_n]^{\oplus r} \twoheadrightarrow M$ of $\qu$ to the quotient
\[ \kk[x_1,\dots,x_{n+1}]^{\oplus r} \twoheadrightarrow \kk[x_1,\dots,x_{n+1}]^{\oplus r}/(x_{n+1}) \simeq \kk[x_1,\dots,x_n]^{\oplus r} \twoheadrightarrow M . \]

One of the main goals of this article is to study the ind-scheme given by taking the colimit of $\qu$ under the sequence of embeddings $\iota_n$.
\begin{Definition}
    We denote by $\qin$ the filtered colimit $\underset{n \rightarrow \infty
    }{\colim} \qu$ with the embeddings defined by $\iota_n$.
\end{Definition}
The study of the scheme $\qin$ motivates understanding the properties of the cohomology of Quot scheme $\qu$ for $n\gg 0$, or preserved under taking the colimit above.\\

The colimit can be also understood as induced by an analogous sequence of embeddings
\begin{align*}
    \ldots \overset{\iota_{n-1}}{\hookrightarrow} \yuniv \overset{\iota_{n}}{\hookrightarrow} \yunivplu \overset{\iota_{n+1}}{\hookrightarrow} \yunivpludw \overset{\iota_{n+2}}{\hookrightarrow} \dots
\end{align*}
where $\iota_n$ is defined by mapping a tuple of elements to one extended by $X_{n+1} = 0$, that is
\begin{align*}
    ((X_1,\dots,X_n),(v_1,\dots,v_r)) \mapsto ((X_1,\dots,X_n,0),(v_1,\dots,v_r)).
\end{align*}

\subsection{\texorpdfstring{A resolution of $\qudwa$}{A resolution of qudwa}}

In this subsection recall the results of \cite{S24}, giving a resolution of singularities for the Quot scheme of points $\mathrm{Quot}_2(\mathcal{O}_{\mathbb{A}^n}^{\oplus r})$ on an $n$-dimensional affine space. In the original setting of the paper, the author considers the Quot scheme $\mathrm{Quot}^2_{\mathcal{E}/\mathcal{S}}$, which parametrises length-two quotients of a locally free sheaf on a surface $\mathcal{S}$. However, the arguments apply equally well to the Quot scheme parametrising quotients of a locally free sheaf on an $n$-dimensional affine space $\mathbb{A}^n$. We use the resolution to reduce the computation of the cohomology of the Quot scheme to the computation of the cohomology of its smooth resolution of singularieties. There we are able to apply the Białynicki-Birula decomposition in our computations.\\
%In the surface case the authors use the constructed resolution to prove that the Quot scheme of points on a surface $\mathrm{Quot}^2_{\mathcal{E}/\mathcal{S}}$ has rational singularities.\\
%Czy usuwać?

%The reason we are interested in understanding the resolution of singularities of $\qudwa$ is that computations of cohomology for smooth schemes are much easier than for singular ones.

Let us first introduce another scheme, which serves as the resolution of singularities. Let $X$ be a scheme of finite type and let $d\in \mathbb{N}$. Then we define
\begin{align*}
    \Hilb_d(X)(T) := \Biggl\{ Z \subset X_T \ \bigg|
    \begin{array}{c}
        Z \subset X_T\ \text{is a closed subscheme such that }\\ Z\rightarrow T \text{ is finite locally free of degree }d
    \end{array} \Biggr\}.
\end{align*}
This is in fact a special case of the Quot scheme of finite length quotients $\mathrm{Quot}^d_{\mathcal{E}/X}$ in the case $\mathcal{E}$ is the structure sheaf $\mathcal{O}_X$. \\

Let $\mathcal{E}$ be a locally free sheaf on $\mathbb{A}^n$. Define $\mathcal{O}_{\mathbb{P}\mathcal{E}}(1)$ to be a canonical line bundle on the projectivization $\mathbb{P}\mathcal{E}$ and let $p: \mathbb{P}\mathcal{E} \rightarrow \mathbb{A}^n$ be the projection map. Denote by $\mathrm{Tot}(\mathbb{P}(\mathcal{E}))$ the total space of the projectivization of $\mathcal{E}$. We can now state a result that is a straightforward generalization of \cite[Theorem 1]{S24}, extending it from the surface case to the affine space $\mathbb{A}^n$.

\begin{Theorem} \label{resolution}
    Fixing the notation as above, there exists a morphism of schemes
    \begin{align*}
        \rho^2: \mathrm{Hilb}_2(\mathrm{Tot}(\mathbb{P}(\mathcal{E}))) \rightarrow \mathrm{Quot}_{\mathbb{A}^n}^{2}(\mathcal{E})
    \end{align*}
    induced by mapping a length two subscheme $Z$ of $\mathrm{Tot}(\mathbb{P}(\mathcal{E}))$ to the quotient
    \begin{align*}
        \mathcal{E} = p_*\mathcal{O}_{\mathbb{P}(\mathcal{E})}(1) \twoheadrightarrow p_*\mathcal{O}_Z(1).
    \end{align*}
    This morphism is a resolution of singularities of $\mathrm{Quot}_{\mathbb{A}^n}^{2}(\mathcal{E})$.
\end{Theorem}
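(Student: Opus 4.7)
The approach is to follow the argument of \cite[Theorem 1]{S24} in the surface case and verify that each step extends verbatim when the surface is replaced by the affine space $\aff^n$.

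First, I would construct $\rho^2$ at the level of functors. For any $\kk$-scheme $T$ and any $T$-flat family $Z \subset \mathbb{P}(\mathcal{E})_T$ of length-two subschemes, form the relative pushforward $p_{T*}\mathcal{O}_Z(1)$ and set
\[ \rho^2([Z]) \,:=\, \bigl[\,\mathcal{E}_T = p_{T*}\mathcal{O}_{\mathbb{P}(\mathcal{E})_T}(1) \twoheadrightarrow p_{T*}\mathcal{O}_Z(1)\,\bigr]. \]
Three things must be verified: (i) local freeness of rank $2$ of $p_{T*}\mathcal{O}_Z(1)$ over $T$, which by cohomology and base change reduces to the fibrewise calculation $\dim H^0(Z, \mathcal{O}_Z(1)) = 2$ and $H^1(Z, \mathcal{O}_Z(1)) = 0$ for length-two subschemes $Z \subset \mathbb{P}^{r-1}$ (immediate, since any such $Z$ is contained in a line); (ii) surjectivity of the counit map, which after pushing forward the short exact sequence $0 \to \mathcal{I}_Z(1) \to \mathcal{O}_{\mathbb{P}(\mathcal{E})_T}(1) \to \mathcal{O}_Z(1) \to 0$ reduces to the fibrewise vanishing $H^1(\mathbb{P}^{r-1}, \mathcal{I}_Z(1)) = 0$; and (iii) properness of the support over $T$, which is automatic from the properness of $p$.

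Next I would verify that the source $\mathrm{Hilb}_2(\mathrm{Tot}(\mathbb{P}(\mathcal{E})))$ is smooth: $\mathbb{P}(\mathcal{E})$ is a smooth $\mathbb{P}^{r-1}$-bundle over $\aff^n$, and the Hilbert scheme of two points on any smooth variety is smooth, being realised as the $S_2$-quotient of the blow-up of the self-product along the diagonal. Properness of $\rho^2$ I would then deduce by noting that both the source and the target are projective over $\mathrm{Sym}^2(\aff^n)$ via natural Hilbert--Chow/support morphisms, and that $\rho^2$ commutes with these. To establish birationality, consider the open locus $U \subset \qudwa$ of quotients $\mathcal{E} \twoheadrightarrow M$ with $\Supp(M)$ consisting of two distinct points $x_1, x_2 \in \aff^n$. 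Over $U$, any such $M$ splits canonically as $M_1 \oplus M_2$ with each $M_i$ a length-one quotient of $\mathcal{E}|_{x_i}$, defining an unordered pair of points of $\mathbb{P}(\mathcal{E})$; this reconstructs the inverse of $\rho^2$ over $U$. Combined with smoothness and properness of the source, this yields the resolution of singularities.

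The hard part will be checking surjectivity of $\rho^2$ onto all of $\qudwa$ and confirming that $U$ is dense in every irreducible component: $\qudwa$ is generally reducible, and its most singular stratum --- length-two quotients supported at a single point $x$ with trivial $\mathfrak{m}_x$-action, parametrised by a $\mathrm{Gr}(2,r)$-bundle over $\aff^n$ --- has an entire $\mathrm{Hilb}_2(\mathbb{P}^{r-1})$-fibre of $\rho^2$ over each of its points. Controlling the geometry of $\rho^2$ over this exceptional locus is the most delicate step of Smirnov's surface-case argument, and what would have to be verified to carry over to arbitrary $n$.
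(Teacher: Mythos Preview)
The paper does not supply a proof of this theorem at all: it is stated as a citation of \cite[Theorem~1]{S24}, with the remark that ``the arguments apply equally well to the Quot scheme parametrising quotients of a locally free sheaf on an $n$-dimensional affine space $\mathbb{A}^n$.'' Your plan is precisely to carry out that remark in detail, so you are aligned with the paper's approach by design.

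One factual correction: you write that ``$\qudwa$ is generally reducible,'' but in fact it is irreducible --- the paper invokes this explicitly, citing \cite{JS22}. This actually simplifies the step you flag as hardest: once you know the locus $U$ of quotients supported at two distinct points is nonempty and open, density in the (unique) component is automatic, and birationality follows. Surjectivity of $\rho^2$ over the punctual stratum still needs to be checked, but this is a direct fibrewise verification rather than a component-by-component analysis.
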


In the case we are considering there is an isomorphism $\mathcal{E} \simeq \mathcal{O}^{\oplus r}_{\mathbb{A}^n}$, and hence $\mathrm{Tot}(\mathbb{P}(\mathcal{E})) \simeq \mathbb{A}^n \times \mathbb{P}^{r-1}$. This is a quasi-projective scheme whose topology we can understand very well. We use this to our advantage in Section \ref{sec4quot2} to analyse the cohomology of the Hilbert scheme $\Hilb_2(\mathrm{Tot}(\mathbb{P}(\mathcal{E})))$.\\

To be able to use the resolution of singularities above we need to know what is the singular locus of $\qudwa$.
\begin{Theorem}[{\cite[Corollary 3.1]{S24}}] \label{gladkilocusqu}
    Let $\mathcal{Q}$ be the universal quotient sheaf on $\qudwdw$. Then a point $[S^{\oplus r} \twoheadrightarrow M]\in \qudwdw$ is smooth if and only if $h^0\big(\mathbb{A}^2,\mathcal{E}nd(\mathcal{Q}_{M})\big) = 2$. In the language of modules, this is equivalent to $\dime(End_{\kk[x_1,x_2]}(M)) = 2$.
\end{Theorem}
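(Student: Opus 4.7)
The plan is to compute the Zariski tangent space of $\qudwdw$ at a point $[M]$ corresponding to a surjection $S^{\oplus r} \twoheadrightarrow M$ with $S = \kk[x_1,x_2]$, and to compare its dimension with the local dimension of the scheme. Letting $K$ denote the kernel of this surjection, the standard deformation theory of Quot schemes identifies
\[ T_{[M]} \qudwdw \cong \Hom_S(K, M). \]
Applying $\Hom_S(-,M)$ to the short exact sequence $0 \to K \to S^{\oplus r} \to M \to 0$ and using that $S^{\oplus r}$ is projective yields the four-term exact sequence
\[ 0 \to \operatorname{End}_S(M) \to M^{\oplus r} \to \Hom_S(K,M) \to \operatorname{Ext}^1_S(M,M) \to 0, \]
and hence $\dime T_{[M]} \qudwdw = 2r - \dime \operatorname{End}_S(M) + \dime \operatorname{Ext}^1_S(M,M).$

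Next I would invoke the self-duality $\dime \operatorname{Ext}^2_S(M, M) = \dime \operatorname{End}_S(M)$, valid for any finite-length $S$-module $M$ on the smooth surface $\mathbb{A}^2$. One can verify this by extending $M$ by zero to $\mathbb{P}^2$ and applying Serre duality, noting that the twist by $\omega_{\mathbb{P}^2}$ acts trivially on stalks of a sheaf of $0$-dimensional support. Combined with the vanishing $\operatorname{Ext}^{\geq 3}_S(M,M) = 0$ (since $S$ has global dimension $2$), this forces $\chi(M,M) = 0$, so
\[ \dime \operatorname{Ext}^1_S(M,M) = 2\, \dime \operatorname{End}_S(M), \]
and therefore
\[ \dime T_{[M]} \qudwdw = 2r + \dime \operatorname{End}_S(M). \]

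Finally I would apply Theorem \ref{resolution} with $\mathcal{E} = \mathcal{O}_{\mathbb{A}^2}^{\oplus r}$: the morphism $\rho^2: \Hilb_2(\mathbb{A}^2 \times \mathbb{P}^{r-1}) \to \qudwdw$ is a birational surjection from a smooth, irreducible source of dimension $2(2 + r - 1) = 2r + 2$. In particular, $\qudwdw$ is irreducible of dimension $2r + 2$, so the point $[M]$ is smooth precisely when $\dime T_{[M]} \qudwdw = 2r + 2$, which by the formula above is equivalent to $\dime \operatorname{End}_S(M) = 2$.

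The main delicate step I anticipate is the self-duality identity $\dime \operatorname{Ext}^2_S(M, M) = \dime \operatorname{End}_S(M)$: one must justify carefully that the twist introduced by Serre duality is innocuous for finite-length sheaves, where it should amount to a choice of trivialisation of $\omega_{\mathbb{P}^2}$ on each formal neighbourhood of the support. An alternative route, avoiding duality altogether, would be to verify the equivalent equality $\chi(M, M) = 0$ directly via Serre's intersection-theoretic formula applied to a finite free resolution of $M$.
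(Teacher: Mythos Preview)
The paper does not supply its own proof of this statement; it is quoted from \cite[Corollary~3.1]{S24}. What the paper does provide are the two ingredients you also assemble: the tangent-space formula $\dime \mathcal{T}_{[M]} = d\cdot r + h^0(\aff^2,\mathcal{E}nd(\mathcal{Q}_{[M]}))$ (stated as Theorem~\ref{wymiarstycznejquotpkt}, again cited from \cite{S24}) and the irreducibility of $\qudwdw$ with dimension $2r+2$ (which the paper attributes to \cite{JS22}). Your argument is correct and essentially reconstructs what lies behind those citations: you derive the tangent-space formula from the $\Hom$--$\mathrm{Ext}$ sequence together with the Serre-duality identity $\dime\mathrm{Ext}^2_S(M,M)=\dime\mathrm{End}_S(M)$, and you read off irreducibility and dimension from the resolution $\rho^2$ of Theorem~\ref{resolution} rather than from \cite{JS22}. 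Both routes reach the same conclusion; yours has the advantage of being self-contained within the paper's own toolkit. The one point that deserves a word is that the smoothness criterion ``tangent dimension equals local dimension'' presupposes the scheme is reduced, but this follows here since the birational resolution exhibits a dense open of $\qudwdw$ isomorphic to an open in a smooth variety.
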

Using the classification of commutative subspaces of $2 \times 2$ matrices in Section \ref{sec2subspaces}, the condition can be stated equivalently as follows: $[S^{\oplus r} \twoheadrightarrow M]$ is a smooth point if and only if the subalgebra generated by $S$ in $\lendi (M)$ is not equal to $\langle Id \rangle$. Since $\textrm{Quot}_2(\mathcal{O}_{\aff^2}^{\oplus r})$ is irreducible \cite{JS22}, and of dimension $2r+2$, the dimension of the tangent space at a point $[S^{\oplus r} \twoheadrightarrow M]\in \textrm{Quot}_2(\mathcal{O}_{\aff^2}^{\oplus r})$ is equal to $2r+2$ if and only if the point is smooth.
\begin{Theorem}[{\cite[Theorem 3.1]{S24}}] \label{wymiarstycznejquotpkt}
    For every point $[M] \in \quddw$,
    \[ \dime\mathcal{T}_{[M]} = d\cdot r+h^0(\aff^2,\mathcal{E}nd(\mathcal{Q}_{[M]})) \]
    where $\mathcal{Q}$ is the universal quotient sheaf on $\quddw$.
\end{Theorem}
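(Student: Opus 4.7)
The plan is to identify the tangent space in standard Grothendieck fashion and reduce the equality to a homological identity specific to surfaces. Since $\aff^2$ is affine, $h^0(\aff^2,\mathcal{E}nd(\mathcal{Q}_{[M]}))$ coincides with $\dime_{\kk}\Hom_S(M,M)$. The tangent space to $\quddw$ at a point $[S^{\oplus r}\twoheadrightarrow M]$ is $\Hom_S(K,M)$, where $K$ is the kernel of the defining surjection; this is a general fact about Quot schemes and is the first input I would cite.

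Next, I apply $\Hom_S(-,M)$ to the short exact sequence $0\to K\to S^{\oplus r}\to M\to 0$. Since $S^{\oplus r}$ is a free $S$-module, $\operatorname{Ext}^{\geq 1}_S(S^{\oplus r},M)=0$, and the long exact sequence collapses to
\[ 0\to \Hom_S(M,M)\to M^{\oplus r}\to \Hom_S(K,M)\to \operatorname{Ext}^1_S(M,M)\to 0. \]
Counting $\kk$-dimensions yields
\[ \dime\Hom_S(K,M)=dr-\dime\Hom_S(M,M)+\dime\operatorname{Ext}^1_S(M,M), \]
so the theorem reduces to the identity $\dime\operatorname{Ext}^1_S(M,M)=2\dime\Hom_S(M,M)$, which is the surface-specific content.

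This identity follows from two inputs. First, for finite-length $S$-modules one has the Euler characteristic vanishing $\chi(M,M):=\sum_i(-1)^i\dime\operatorname{Ext}^i_S(M,M)=0$, which I would derive from the Koszul resolution of $\kk=S/(x_1,x_2)$ (giving $\operatorname{Ext}^i_S(\kk,\kk)$ of dimensions $1,2,1$) followed by a dévissage along composition series, using that $\operatorname{Ext}^\bullet_S$ between skyscrapers at distinct closed points vanishes. Second, local duality applied to the regular local ring $S_{\mathfrak{m}}$ at each maximal ideal of $\Supp(M)$ yields the $2$-Calabi-Yau pairing $\operatorname{Ext}^i_S(M,M)\simeq \operatorname{Ext}^{2-i}_S(M,M)^{\vee}$, so in particular $\dime\operatorname{Ext}^2_S(M,M)=\dime\Hom_S(M,M)$. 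Combining these with the vanishing $\operatorname{Ext}^{\geq 3}_S(M,M)=0$ (Auslander-Buchsbaum) gives $\dime\operatorname{Ext}^1_S(M,M)=2\dime\Hom_S(M,M)$, and substituting back closes the argument.

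The main obstacle is the Calabi-Yau step. Although classical, establishing the self-duality $\operatorname{Ext}^i_S(M,M)\simeq \operatorname{Ext}^{2-i}_S(M,M)^{\vee}$ in the affine setting requires some care: either unwinding local duality at each maximal ideal in $\Supp(M)$, or running an Ischebeck-type argument directly on the Koszul bicomplex. Everything else is routine deformation-theoretic bookkeeping for Quot schemes.
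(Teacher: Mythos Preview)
Your argument is correct: the long exact sequence from $\Hom_S(-,M)$ applied to $0\to K\to S^{\oplus r}\to M\to 0$ reduces the claim to $\dime\operatorname{Ext}^1_S(M,M)=2\dime\Hom_S(M,M)$, and this follows from the vanishing Euler characteristic together with the 2-Calabi-Yau pairing for finite-length modules on a smooth surface. There is nothing to compare against, however: the paper does not prove this theorem but simply quotes it from \cite[Theorem~3.1]{S24}, so your proposal supplies a proof where the paper gives none.
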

Let us take note of the following classical fact about Quot schemes, which we use in Section \ref{sec4quot2} when extending Theorem \ref{gladkilocusqu} to $\qudwa$.
\begin{Proposition}[{\cite[Theorem 2.7.3]{hartshorne2009deformation}}] \label{stycznadoquot}
    The tangent space to the Quot scheme $\qu$ at a point $[S^{\oplus r} \twoheadrightarrow M]$ is given by $\Hom_S(K,M)$, where $K$ is the kernel of $S^{\oplus r} \twoheadrightarrow M$.
\end{Proposition}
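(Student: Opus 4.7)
The plan is to identify the tangent space at $x = [S^{\oplus r} \twoheadrightarrow M]$ with the set of morphisms $\mathrm{Spec}(\kk[\epsilon]/(\epsilon^2)) \to \qu$ extending $x$, and then exhibit an explicit bijection between these and $\Hom_S(K,M)$ using standard deformation theory. Write $F := S^{\oplus r}$ and denote the given surjection by $q: F \twoheadrightarrow M$. By the functor of points description of $\qu$, such a morphism amounts to an $S[\epsilon]$-submodule $\tilde{K} \subset F[\epsilon]$ satisfying $\tilde{K} \bmod \epsilon = K$ and such that $\tilde{M} := F[\epsilon]/\tilde{K}$ is flat over $\kk[\epsilon]/(\epsilon^2)$. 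Tensoring the short exact sequence $0 \to \tilde{K} \to F[\epsilon] \to \tilde{M} \to 0$ with $\kk$ and applying the Tor long exact sequence (equivalently, a snake lemma argument) shows that flatness of $\tilde{M}$ is equivalent to the clean condition $\tilde{K} \cap \epsilon F[\epsilon] = \epsilon K$.

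Next, I set up the bijection explicitly. In one direction, given $\phi \in \Hom_S(K,M)$ define
\[ \tilde{K}_\phi := \{ k + \epsilon v : k \in K,\ v \in F,\ q(v) = \phi(k) \} \subset F[\epsilon]. \]
The $S$-linearity of $\phi$ is exactly what guarantees closure under the $S$-action (if $q(v) = \phi(k)$, then $q(sv) = s\phi(k) = \phi(sk)$), and closure under multiplication by $\epsilon$ uses $\phi(0)=0$. A direct check shows $\tilde{K}_\phi \bmod \epsilon = K$ and $\tilde{K}_\phi \cap \epsilon F[\epsilon] = \epsilon K$, so $\tilde{K}_\phi$ gives a valid first-order deformation. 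In the other direction, given a valid $\tilde{K}$, for each $k \in K$ pick any lift $\tilde{k} = k + \epsilon v \in \tilde{K}$ and set $\phi(k) := q(v) \in M$. The intersection condition $\tilde{K} \cap \epsilon F[\epsilon] = \epsilon K$ makes this independent of the choice of lift, and the $S[\epsilon]$-stability of $\tilde{K}$ forces $\phi$ to be $S$-linear. A straightforward computation shows that these two constructions are mutually inverse.

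Finally, I would verify that the bijection is an isomorphism of $\kk$-vector spaces: $\phi = 0$ corresponds to the trivial deformation $\tilde{K} = K \oplus \epsilon K$, and sum/scalar operations match on both sides. The only genuinely nontrivial step is the translation of flatness of $\tilde{M}$ into the intersection condition on $\tilde{K}$; once that is in hand, everything else is routine bookkeeping, and the freeness of $F$ removes any lifting obstructions. Since the result is a standard fact of deformation theory (appearing, in the form cited, in Hartshorne's deformation theory text), I expect no conceptual surprises beyond carefully tracking the module structures.
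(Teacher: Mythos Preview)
Your argument is correct and is essentially the standard deformation-theoretic proof (as in Hartshorne's text). Note that the paper does not supply its own proof of this proposition: it is stated with a citation to \cite[Theorem 2.7.3]{hartshorne2009deformation} and used as a black box, so there is no in-paper argument to compare against.
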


\subsection{Long exact sequence of an abstract blowup}

We now introduce a long exact sequence, which allows us to compute the cohomology of a possibly singular scheme using the cohomology of its resolution of singularities. We apply it in Section \ref{sec4quot2} to compute the cohomology of a the Quot scheme $\qudwa$, which is singular. First we define the notion of an abstract blowup, to which the theorem applies.

\begin{Definition}
    Let $X$ be a scheme of finite type over the field $\kk$ and let $Z \subset X$ be a closed immersion. A proper morphism $Y \rightarrow X$ is an \textit{abstract blowup} with center $Z$ if there exists an isomorphism of the reduced loci $(X-Z)_{\mathrm{red}} \simeq (Y-Z')_{\mathrm{red}}$, where $Z' = Z \times_X Y$.
\end{Definition}

Given an abstract blowup $Y \rightarrow X$, we can write an exact sequence, which will be immensely useful in our computations.

\begin{Theorem}[{\cite[14.5.3]{MVW06}}] \label{blowupseq}
    Let $Y \rightarrow X$ be an abstract blowup with center $Z$ and set $Z' = Z \times_X Y$. Then there exists a long exact sequence in cohomology
    \begin{align*}
        \ldots \rightarrow H^i(X) \rightarrow H^i(Y) \oplus H^i(Z) \rightarrow H^i(Z') \rightarrow H^{i+1}(X) \rightarrow \dots
    \end{align*}
\end{Theorem}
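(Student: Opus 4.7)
The plan is to derive the long exact sequence from an exact triangle of sheaves on $X$, namely
\[ \mathbb{Z}_X \longrightarrow Rf_* \mathbb{Z}_Y \oplus i_* \mathbb{Z}_Z \longrightarrow i_* Rg_* \mathbb{Z}_{Z'} \longrightarrow \mathbb{Z}_X[1], \]
where $f: Y \to X$ is the abstract blowup, $i: Z \hookrightarrow X$ is the closed inclusion of the center, $i': Z' \hookrightarrow Y$ its base change along $f$, and $g: Z' \to Z$ the induced morphism (so that $f \circ i' = i \circ g$). Applying $R\Gamma(X, -)$ and using the identifications $R\Gamma(X, Rf_* \mathbb{Z}_Y) = H^*(Y)$, $R\Gamma(X, i_* \mathbb{Z}_Z) = H^*(Z)$, and $R\Gamma(X, i_* Rg_* \mathbb{Z}_{Z'}) = H^*(Z')$ yields the claimed Mayer--Vietoris style sequence.

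For the construction, I would take the first arrow to be the difference of the adjunction units $\mathbb{Z}_X \to Rf_* \mathbb{Z}_Y$ and $\mathbb{Z}_X \to i_* \mathbb{Z}_Z$, and the second to be the difference of the restriction maps $Rf_* \mathbb{Z}_Y \to Rf_* Ri'_* \mathbb{Z}_{Z'} = i_* Rg_* \mathbb{Z}_{Z'}$ and $i_* \mathbb{Z}_Z \to i_* Rg_* \mathbb{Z}_{Z'}$. Functoriality of the units makes the composition zero, so the sequence is a complex; exactness then has to be verified stalkwise.

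On the open set $X \setminus Z$ the morphism $f$ restricts to a homeomorphism $Y \setminus Z' \xrightarrow{\sim} X \setminus Z$ (topologically this follows from the abstract blowup condition since reduced structure does not affect the underlying spaces), so $Rf_* \mathbb{Z}_Y$ restricts to $\mathbb{Z}$ there while the other two sheaves vanish, making exactness trivial. At a point $z \in Z$, proper base change for $f$ and $g$ identifies $(Rf_* \mathbb{Z}_Y)_z \simeq R\Gamma(F_z, \mathbb{Z})$ and $(i_* Rg_* \mathbb{Z}_{Z'})_z \simeq R\Gamma(g^{-1}(z), \mathbb{Z})$; since $Z' = Z \times_X Y$, the fibers satisfy $g^{-1}(z) = F_z := f^{-1}(z)$ as topological spaces. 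The stalk at $z$ thus reads
\[ \mathbb{Z} \longrightarrow R\Gamma(F_z, \mathbb{Z}) \oplus \mathbb{Z} \longrightarrow R\Gamma(F_z, \mathbb{Z}) \longrightarrow \mathbb{Z}[1], \]
whose exactness is immediate since the middle differential is the difference of the two natural maps, with kernel the diagonal copy of $\mathbb{Z}$.

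The main obstacle is the passage from algebraic geometry to the topological framework needed for integral singular cohomology, in particular the applicability of proper base change. Since $\kk$ is algebraically closed of characteristic zero, I would work on the analytification (after a base change $\kk \hookrightarrow \mathbb{C}$ if necessary, invoking invariance of integral cohomology of varieties under such extensions): the map $f^{an}$ is proper because $f$ is, and proper base change for complexes of abelian sheaves on locally compact Hausdorff spaces is classical. This is the setting in which \cite{MVW06} establish the theorem.
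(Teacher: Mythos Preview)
Your argument is essentially correct and gives a self-contained proof via the derived category of sheaves on the analytification. The paper, by contrast, does not prove this statement at all: it simply cites \cite[14.5.3]{MVW06}, where the result is established as an exact triangle of \emph{motives}, and then remarks that the long exact sequence in singular cohomology follows by applying $\Hom(-,\mathbb{Z})$. So your route is genuinely different and more elementary for the specific target of integral singular cohomology; the motivic version in \cite{MVW06} is more general (it yields the analogous sequence in any cohomology theory representable in $DM$) but imports the cdh-descent machinery. Your final sentence is therefore slightly off: \cite{MVW06} does \emph{not} work with analytic sheaves and classical proper base change, but in the motivic/cdh setting.

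One point worth tightening: saying ``the composition is zero, so the sequence is a complex; exactness then has to be verified stalkwise'' is a little loose in a triangulated category, where a nullhomotopic composition does not by itself pin down a candidate triangle. The clean formulation is that the commutative square
\[
\begin{array}{ccc}
\mathbb{Z}_X & \longrightarrow & Rf_*\mathbb{Z}_Y \\
\downarrow & & \downarrow \\
i_*\mathbb{Z}_Z & \longrightarrow & i_*Rg_*\mathbb{Z}_{Z'}
\end{array}
\]
is homotopy cartesian, i.e.\ the induced map between the cones of the horizontal arrows is a quasi-isomorphism; \emph{that} can be checked on stalks, and your stalk computations do exactly this. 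Alternatively, apply $Rf_*$ to the localization triangle $j'_!\mathbb{Z}_{Y\setminus Z'}\to\mathbb{Z}_Y\to i'_*\mathbb{Z}_{Z'}$ on $Y$, identify $Rf_*j'_!\mathbb{Z}_{Y\setminus Z'}\simeq j_!\mathbb{Z}_{X\setminus Z}$ via the isomorphism over $X\setminus Z$, and compare with the localization triangle on $X$; this avoids stalks entirely.
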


The sequence in singular cohomology above is obtained from the sequence of motives in \cite{MVW06} by applying the contravariant functor $\Hom(-,\mathbb{Z})$. It generalizes in many ways, including to étale cohomology, Chow groups.

\subsection{The Białynicki-Birula decomposition}

In this subsection we introduce a version of the Białynicki-Birula decomposition \cite{ABB76}, which is a powerful tool for computations of cohomology. It has been successfully applied to research the cohomology and geometry of moduli spaces, notably the Hilbert scheme of points, see \cite{G90}, \cite{JJ19} and moduli of Higgs bundles, see \cite{HV15}, \cite{H22}. We present a result giving a decomposition under a weakened assumption of \textit{semiprojectivity}, following \cite{HV15}. For a detailed description of the classical version see for example \cite{BCM02}.\\

\begin{Definition}[{\cite[Def 1.1.1]{HV15}}] \label{defsemiprojective}
    Let $X$ be a smooth, quasi-projective variety over $\mathbb{C}$ with an action of the torus $\mu: \Gm \times X \rightarrow X$. Then we say that $X$ is \textit{semiprojective} if the following conditions hold
    \begin{enumerate}
        \item The fixed point set $X^{\Gm}$ is proper.
        \item For any point $x\in X$ there exists a limit $\underset{t\rightarrow 0}{\limi}\ \mu(t,x)$.
    \end{enumerate}
\end{Definition}
To make the notation more transparent, whenever a torus acts on a variety $X$ we write $t\cdot x := \mu(t,x)$.\\
Let $X^{\Gm} = \coprod_{i\in I} Z_i$ be the decomposition of the fixed locus into irreducible components. Define the \textit{positive cell} $X_i^+$ to be the locus of closed points with a limit at $t\rightarrow 0$ in the component $Z_i$, that is
\begin{align*}
X_i^+ := \{x\in X \mid \underset{t\rightarrow 0}{\limi}\ t \cdot x \in Z_i \}. 
\end{align*}
By definition for a semiprojective variety $X$ we have a decomposition $X = \coprod_{i \in I} X_i^+$ as sets. We analogously define the \textit{negative cells} $X_i^-$ by
\begin{align*}
    X_i^- := \{x\in X \mid \underset{t\rightarrow \infty}{\limi}\ t \cdot x \in Z_i \}.
\end{align*}
\begin{Theorem}[{\cite[Białynicki-Birula theorem]{HV15}}] \label{ABBdointro}
    For a semiprojective variety $X$ the loci $X_i^+$ (respectively $X_i^-$) defined above are locally closed and form affine bundles over $Z_i$ with the morphisms $p_i: X_i^+ \rightarrow Z_i$ given by $x \mapsto \underset{t\rightarrow 0}{\limi}\ t\cdot x$ (respectively $\underset{t\rightarrow \infty}{\limi} t\cdot x$).
\end{Theorem}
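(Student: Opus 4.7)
The plan is to reduce the semiprojective case to the classical Białynicki-Birula theorem for smooth projective varieties by a suitable equivariant compactification, and then carry out a local analysis at each fixed point using the $\Gm$-equivariant structure of the tangent space.

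First, I would reduce to the local picture around a fixed point $z \in Z_i$. Since $X$ is smooth, the tangent space $T_zX$ carries a linear $\Gm$-representation and decomposes into weight subspaces
\[
T_zX = T_z^+ \oplus T_z^0 \oplus T_z^-,
\]
where $T_z^0 = T_zZ_i$ is the fixed part. By Bia\l ynicki-Birula's classical local structure theorem (equivalently, an equivariant Luna slice / formal neighborhood argument), there is a $\Gm$-equivariant étale neighborhood of $z$ in $X$ isomorphic to a neighborhood of $0$ in $T_zX$. In this local model, the points whose $t\to 0$ limit lies in $Z_i$ are exactly those in $T_z^0 \oplus T_z^+$, and the limit map is the linear projection $T_z^+ \oplus T_z^0 \to T_z^0$, which is trivially an affine bundle with fiber $T_z^+$.

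Next, I would globalize. By Sumihiro's theorem, the smooth quasi-projective $\Gm$-variety $X$ admits a $\Gm$-equivariant open embedding into a projective $\Gm$-variety, and after equivariant resolution we may take this to be a smooth projective $\Gm$-variety $\bar X$. The classical Bia\l ynicki-Birula theorem gives a decomposition $\bar X = \coprod_j \bar X_j^+$ into locally closed affine bundles over the components of $\bar X^{\Gm}$, together with morphisms $\bar p_j$ realized by $x \mapsto \lim_{t\to 0} t\cdot x$ (here the action extends canonically to a morphism $\mathbb{A}^1 \times \bar X \to \bar X$ because $\bar X$ is proper). Semiprojectivity condition (1) implies $X^{\Gm}$ is a union of components $Z_i$ of $\bar X^{\Gm}$ (since it is closed in $\bar X^{\Gm}$ and proper, hence open-and-closed in the relevant components), while condition (2) forces $\lim_{t\to 0} t\cdot x \in X^{\Gm}$ for every $x \in X$. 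Combining these, one checks $X_i^+ = \bar X_i^+ \cap X$ is open in $\bar X_i^+$, and the restriction of $\bar p_i$ is the desired morphism $p_i: X_i^+ \to Z_i$, inheriting the affine bundle structure.

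Finally, the local tangent space description and the global compactification description must be compared: the Zariski local triviality of $\bar X_i^+ \to Z_i$ pulled back to an étale neighborhood of $z$ agrees with the linear projection $T_z^+ \oplus T_z^0 \to T_z^0$, so the fibers of $p_i$ are indeed $T_z^+$ affine spaces varying over $Z_i$. The negative cells $X_i^-$ are handled by reversing the $\Gm$-action, i.e.\ applying the same argument to the twisted action $t \cdot_{-} x := t^{-1}\cdot x$, which exchanges the roles of $T_z^+$ and $T_z^-$.

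The main obstacle is the globalization step: in the quasi-projective setting the limit map is not a morphism for purely formal reasons, and one must control what happens at the boundary of a $\Gm$-equivariant compactification. The crucial input is the compatibility of the two semiprojectivity conditions, which together guarantee that the Bia\l ynicki-Birula cells of the compactification attached to components of $X^{\Gm}$ remain inside $X$, so the structure descends cleanly to $X$ without being contaminated by cells supported on the boundary divisor.
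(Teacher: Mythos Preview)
The paper does not prove this theorem: it is quoted from \cite{HV15} as a preliminary result and no argument is given. There is therefore nothing in the paper to compare your proposal against.

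That said, your compactification approach is a standard and essentially correct route to the result. Two points deserve tightening. First, the sentence ``$X_i^+ = \bar X_i^+ \cap X$ is open in $\bar X_i^+$ \dots inheriting the affine bundle structure'' is not sufficient as written: an open subset of the total space of an affine bundle is not an affine bundle. What you actually need (and what you allude to in your final paragraph) is the stronger equality $\bar X_i^+ = X_i^+$. This follows because $\bar X \setminus X$ is closed and $\Gm$-invariant, so any $x \in \bar X_i^+ \setminus X$ would have its entire orbit closure, including the limit point $z \in Z_i$, contained in $\bar X \setminus X$, contradicting $Z_i \subset X$. You should make this explicit rather than leave it to ``one checks''.

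Second, your treatment of the negative cells by ``reversing the $\Gm$-action'' is imprecise: semiprojectivity is not symmetric, since condition (2) for the reversed action would require all $t\to\infty$ limits to exist, which is not assumed. However, the compactification $\bar X$ is indifferent to which direction you take limits, so the same closed-invariant-complement argument shows $\bar X_i^- \subset X$ and hence $X_i^- = \bar X_i^-$ is an affine bundle over $Z_i$. So the conclusion holds, but not by literally rerunning the semiprojective argument for the inverse action.
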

In the classical, projective case this gives a decomposition of $X$ into locally closed cells $X_i^+$ (respectively $X_i^-$) \cite{BCM02}, which can often be much better understood than $X$ itself. If $X$ is semiprojective, the same still holds for the positive cells $X_i^+$.\\
A case of particular interest is when $X^{\Gm}$ consists of a finite number of points. Then $X_i^+$ (respectively $X_i^-$) form affine bundles over points, and thus they are affine spaces. This can be used to extract the cohomology of the projective variety $X$ from its cell decompositon. Describing the action of $\Gm$ on the tangent space to a point $Z_i$ of the fixed locus allows us to compute the dimensions of the cells of the decomposition. More precisely:
\begin{Lemma}[{\cite[Theorem 4.2.(4)]{BCM02}}] \label{styczne}
    Let $X$ be a semiprojective variety such that $X^{\Gm}$ consists of finitely many points. Consider the restriction $(\mathcal{T}_X)_{|Z_i}$ of the tangent bundle of $X$ to a point $Z_i$. Let
    \[ (\mathcal{T}_X)_{|Z_i} = \mathcal{T}_i^+ \oplus \mathcal{T}_i^- \]
    be a decomposition of $(\mathcal{T}_X)_{|Z_i}$ into a direct sum of subbundles where $\Gm$ acts with a positive or negative character. Then there exists an isomorphism of affine spaces $X_i^+ \simeq \mathcal{T}_i^+$ (respectively $X_i^- \simeq \mathcal{T}_i^-$) compatible with $p_i$.
\end{Lemma}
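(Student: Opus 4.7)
The plan is to derive the lemma from the semiprojective Białynicki-Birula theorem (Theorem \ref{ABBdointro}) together with an equivariant identification of the positive cell with the positive weight part of the tangent space at the isolated fixed point $Z_i$. First I would invoke Theorem \ref{ABBdointro} directly: it gives that $p_i: X_i^+ \to Z_i$ is an affine bundle. Since by assumption $Z_i$ is a single reduced closed point, this bundle structure degenerates to $X_i^+$ being an affine space whose dimension equals the rank of the bundle. Moreover, the compatibility of the claimed isomorphism with $p_i$ is automatic in this situation: both $p_i$ and the canonical projection $\mathcal{T}_i^+ \to \{0\}$ collapse to the single point $Z_i$. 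Hence the content of the lemma reduces to identifying the affine space $X_i^+$ with the vector space $\mathcal{T}_i^+$.

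The next step is to pin down the tangent space. Since $Z_i$ is an isolated component of the fixed locus and $X$ is smooth at $Z_i$, the $\Gm$-representation $\mathcal{T}_{Z_i} X$ decomposes as $\mathcal{T}_i^+ \oplus \mathcal{T}_i^-$ with no zero weight part. A short computation on arcs $\gamma: \mathrm{Spec}\,\kk[\epsilon]/(\epsilon^2) \to X_i^+$ shows that $\mathcal{T}_{Z_i} X_i^+$ is exactly the subspace of weights with positive character, namely $\mathcal{T}_i^+$: indeed an arc $\gamma$ whose direction has a component along $\mathcal{T}_i^-$ fails to have a $t \to 0$ limit at $Z_i$. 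This pins down the dimension of $X_i^+$.

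To produce an explicit isomorphism, I would use the linearizability of the $\Gm$-action near $Z_i$. Since $X$ is smooth at $Z_i$ and $\Gm$ is linearly reductive, a Luna-type slice argument (or equivalently Sumihiro's theorem combined with equivariant formal coordinates) yields a $\Gm$-equivariant étale neighborhood of $Z_i$ modeled on $\mathcal{T}_{Z_i}X = \mathcal{T}_i^+ \oplus \mathcal{T}_i^-$ with its natural linear $\Gm$-action. In this local model the locus converging to $Z_i$ under $t \to 0$ is visibly $\mathcal{T}_i^+$. To globalize, I would use the contracting $\Gm$-flow: for any $x \in X_i^+$, the orbit point $t\cdot x$ enters the local chart for sufficiently small $t$, yielding an associated vector $v(t) \in \mathcal{T}_i^+$, and rescaling by $t^{-1}$ under the natural $\Gm$-action on $\mathcal{T}_i^+$ (where all weights are positive) produces a vector $v$ independent of the chosen $t$ by equivariance of the slice.

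The main obstacle is this local-to-global extension. The $\Gm$-equivariant étale slice only gives an isomorphism near $Z_i$, whereas the statement requires a global algebraic isomorphism with the whole of $X_i^+$. The rescaling procedure must be shown to define a genuinely algebraic morphism $X_i^+ \to \mathcal{T}_i^+$, not just a set-theoretic map. This is where the semiprojectivity assumption is essential: condition (2) of Definition \ref{defsemiprojective} ensures that every $x \in X_i^+$ can be contracted into the slice, and the affine bundle structure provided by Theorem \ref{ABBdointro} guarantees that the resulting construction assembles into an algebraic isomorphism of affine spaces, which by dimension count and equivariance must coincide with $\mathcal{T}_i^+$.
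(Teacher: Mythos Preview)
The paper does not prove this lemma; it is stated as a citation from \cite[Theorem 4.2.(4)]{BCM02} and used as a black box. So there is no ``paper's own proof'' to compare against, and your attempt is additional work beyond what the paper does.

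Your outline follows the standard route and has the right architecture: invoke Theorem~\ref{ABBdointro} to get that $X_i^+$ is an affine space, then identify its dimension via the tangent space using equivariant linearization at the fixed point. A couple of points deserve tightening. First, the Luna slice gives an \emph{\'etale} $\Gm$-equivariant neighborhood, not in general an open immersion, so you should say why the slice can be taken to be an isomorphism onto its image near an isolated fixed point (here the fixed locus is zero-dimensional, so the slice is all of $\mathcal{T}_{Z_i}X$ and the \'etale map is an isomorphism in a neighborhood of the origin by comparing tangent spaces). Second, your rescaling map $x \mapsto t^{-1}\cdot(\text{slice coordinate of } t\cdot x)$ is the right idea, but to make it algebraic you should phrase it as the composite $X_i^+ \xrightarrow{t\cdot(-)} X_i^+ \cap U \xrightarrow{\text{slice}^{-1}} \mathcal{T}_i^+ \xrightarrow{t^{-1}\cdot(-)} \mathcal{T}_i^+$ for a fixed regular value of $t$, and then check independence of $t$ by equivariance; alternatively, once you know $X_i^+$ is an affine space and have computed $\mathcal{T}_{Z_i}X_i^+ = \mathcal{T}_i^+$, the dimension count alone finishes the lemma as stated, since the ``compatibility with $p_i$'' is vacuous over a point. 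You essentially observe this in your first paragraph, so the later Luna argument is only needed to justify the tangent-space identification $\mathcal{T}_{Z_i}X_i^+ = \mathcal{T}_i^+$, not to build a global isomorphism.
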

For a semiprojective variety $X$ not all points have to have limits when $t\rightarrow \infty$. This motivates the following definition.
\begin{Definition}
    Let $X$ be a semiprojective variety. Then the sum of the negative cells $\mathcal{C} := \bigcup_{i\in I} X_i^-$ is the \textit{core} of $X$.
\end{Definition}
The importance of understanding the core of $X$ lies in the following two theorems.
\begin{Theorem}[{\cite[Corollary 1.2.2]{HV15}}] \label{coreproper}
    The core $\mathcal{C}$ of a semiprojective variety $X$ is proper.
\end{Theorem}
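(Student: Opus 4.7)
The plan is to realize $\mathcal{C}$ as the preimage of the proper fixed locus $X^{\Gm}$ under the $t \to \infty$ retraction on a $\Gm$-equivariant projective compactification of $X$, and then to deduce properness of $\mathcal{C}$ from the closedness of that preimage in the ambient projective scheme.

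First, since $X$ is smooth (in particular normal) and quasi-projective with a $\Gm$-action, Sumihiro's equivariant completion theorem provides a $\Gm$-equivariant open immersion $X \hookrightarrow \bar X$ into a projective $\Gm$-variety $\bar X$, whose boundary $D := \bar X \setminus X$ is automatically closed in $\bar X$ and $\Gm$-invariant. On the projective scheme $\bar X$ the classical Białynicki--Birula decomposition applies and produces a morphism
\[ p^- \colon \bar X \to \bar X^{\Gm}, \qquad p^-(y) = \underset{t\rightarrow \infty}{\limi}\ t \cdot y. \]

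The key set-theoretic identity I then need is $\mathcal{C} = (p^-)^{-1}(X^{\Gm})$. The inclusion $\mathcal{C} \subseteq (p^-)^{-1}(X^{\Gm})$ is immediate from the definitions. Conversely, suppose $y \in \bar X$ satisfies $p^-(y) \in X^{\Gm} \subseteq X$; if $y$ lay in the boundary $D$, then the entire orbit $\Gm \cdot y$ would sit in the closed $\Gm$-invariant subset $D$, and hence so would its closure. But $\overline{\Gm \cdot y}$ contains the limit point $p^-(y) \in X$, a contradiction. Therefore $y \in X$, and consequently $y \in \mathcal{C}$.

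Since $X^{\Gm}$ is proper by hypothesis, it is closed in $\bar X^{\Gm}$ and hence in $\bar X$, so the preimage $(p^-)^{-1}(X^{\Gm})$ is closed in the projective variety $\bar X$ and therefore proper; by the identification above, $\mathcal{C}$ is proper. The main technical obstacle is arranging both projectivity of the compactification and $\Gm$-invariance of its boundary simultaneously. Sumihiro's theorem supplies exactly this for normal quasi-projective varieties with torus action, so the argument goes through under the hypotheses of Definition \ref{defsemiprojective}; without that input one would have to fall back on a more intrinsic description of the core, for instance as the scheme-theoretic locus on which the action morphism $\Gm \times X \to X$ extends across the point at infinity of the compactified parameter $\mathbb{P}^1 \supset \Gm$.
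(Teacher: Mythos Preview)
The paper does not supply its own proof of this result; it merely records the citation \cite[Corollary 1.2.2]{HV15}. So there is nothing in the paper to compare against directly, and I evaluate your argument on its own merits.

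There is a genuine gap. You assert that on the projective compactification $\bar X$ the classical Białynicki--Birula theory furnishes a \emph{morphism}
\[
p^-\colon \bar X \longrightarrow \bar X^{\Gm}, \qquad y \longmapsto \underset{t\to\infty}{\limi}\ t\cdot y,
\]
and then use continuity of $p^-$ to deduce that $(p^-)^{-1}(X^{\Gm})$ is closed in $\bar X$. But the Białynicki--Birula decomposition only supplies retractions $p_i^-\colon \bar X_i^- \to Z_i$ on each locally closed cell separately; these do \emph{not} glue to a global morphism, nor even to a continuous map. For a concrete counterexample take $\bar X = \mathbb{P}^2$ with $t\cdot[x{:}y{:}z] = [x{:}ty{:}t^2z]$: the sequence $[0{:}1{:}1/n]$ converges to the fixed point $[0{:}1{:}0]$, yet $p^-([0{:}1{:}1/n]) = [0{:}0{:}1]$ for every $n$ while $p^-([0{:}1{:}0]) = [0{:}1{:}0]$. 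Hence the step ``$X^{\Gm}$ closed in $\bar X$ $\Rightarrow$ $(p^-)^{-1}(X^{\Gm})$ closed'' is unjustified, and the argument as written does not establish properness of $\mathcal{C}$. (A secondary, more easily patched issue: Sumihiro's compactification need not be smooth, so the classical Białynicki--Birula theorem is not immediately available on $\bar X$.)

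Your set-theoretic identification $\mathcal{C} = (p^-)^{-1}(X^{\Gm})$ is correct and your argument for it is clean. Your closing remark---characterising $\mathcal{C}$ via extension of the action map across $\infty\in\mathbb{P}^1$---points toward one way to repair the proof, since the locus where such an extension exists is representable and can be analysed directly. Another route, closer in spirit to \cite{HV15}, is to use semiprojectivity to produce a proper $\Gm$-equivariant morphism from $X$ to an affine $\Gm$-variety with only positive weights, and then to identify $\mathcal{C}$ with the fibre over the unique fixed point.
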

\begin{Theorem}[{\cite[Theorem 1.3.1]{HV15}}] \label{coreizo}
    The inclusion of the core $\iota: \mathcal{C} \hookrightarrow X$ induces an isomorphism $\iota^*: H^*(X,\mathbb{Z}) \overset{\sim}{\longrightarrow} H^*(\mathcal{C},\mathbb{Z})$.
\end{Theorem}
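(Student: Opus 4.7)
The strategy is to compare the cohomology of $X$ and $\mathcal{C}$ using their Białynicki-Birula stratifications, both of which are indexed by the fixed components $Z_i$. By \cref{ABBdointro}, we have $X = \bigsqcup_i X_i^+$ and $\mathcal{C} = \bigsqcup_i X_i^-$, with each $X_i^\pm \to Z_i$ an affine bundle. In particular, the inclusions of zero sections $Z_i \hookrightarrow X_i^+$ and $Z_i \hookrightarrow X_i^-$ induce isomorphisms in cohomology, so the cohomological ``building blocks'' on both sides are the same groups $H^*(Z_i)$. The inclusion $\iota$ restricted to each negative cell $X_i^-$ is compatible with this common retraction to $Z_i$, which makes the comparison plausible locally.

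First I would fix a total order on the index set $I$ refining the partial order $j \leq i \iff X_j^+ \subseteq \overline{X_i^+}$. With such an ordering, the sets $F_m := \bigsqcup_{i \leq m} X_i^+$ form an exhausting filtration of $X$ by closed subvarieties, with $F_m \setminus F_{m-1} = X_m^+$. A dual analysis applied to the negative cells (using that the closure partial order for negative cells is opposite to that for positive cells) produces a matching filtration of the proper variety $\mathcal{C}$ by subsets $\mathcal{C}_m$ whose successive quotients are the cells $X_m^-$. I would then proceed by induction on $m$: assuming $\iota^*: H^*(F_{m-1}) \overset{\sim}{\to} H^*(\mathcal{C}_{m-1})$, I would compare the long exact sequences of the pairs $(F_m, F_{m-1})$ and $(\mathcal{C}_m, \mathcal{C}_{m-1})$ in a commutative ladder and invoke the five-lemma. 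The relative cohomology groups on both sides, by excision and the Thom isomorphism for affine bundles, are isomorphic to cohomology of $Z_m$ shifted by $2 d_m^+$ (on the $X$-side) and $2 d_m^-$ (on the $\mathcal{C}$-side), where $d_m^\pm$ are the ranks of $X_m^\pm \to Z_m$.

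The main obstacle will be showing that these two Thom-shifted contributions match compatibly under $\iota^*$. This is a local question in a formal neighborhood of each $Z_m$: one must identify the Euler class of the positive-weight bundle $\mathcal{T}_m^+$ with the Euler class of the negative-weight bundle $\mathcal{T}_m^-$ under the appropriate Poincaré-Lefschetz duality. The key inputs are the tangent decomposition $(\mathcal{T}_X)|_{Z_m} = \mathcal{T}_m^+ \oplus \mathcal{T}_m^-$ from \cref{styczne}, the dimension identity $d_m^+ + d_m^- = \dim X - \dim Z_m$, and Poincaré-Lefschetz duality on the smooth variety $X$ applied in a neighborhood of $Z_m \subset \mathcal{C}$ (which is where the properness of the core, \cref{coreproper}, becomes essential). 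Once this local compatibility is verified at each fixed component, the induction closes and yields the desired isomorphism $\iota^*: H^*(X,\mathbb{Z}) \overset{\sim}{\to} H^*(\mathcal{C},\mathbb{Z})$.
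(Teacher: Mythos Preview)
The paper does not prove this statement; it is quoted from \cite{HV15} and used as a black box, so there is no proof in the paper to compare against. Evaluating your proposal on its own terms, there is a genuine gap. Your identification $H^*(F_m,F_{m-1})\cong H^{*-2d_m^+}(Z_m)$ on the $X$-side fails when $X$ is not proper. Take $X=\mathbb{A}^1$ with the scaling action: there is a single positive cell $X_0^+=\mathbb{A}^1$ with $d_0^+=1$, the filtration is $\emptyset\subset\mathbb{A}^1$, and $H^*(F_0,\emptyset)=H^*(\mathbb{A}^1)$ is $\mathbb{Z}$ concentrated in degree $0$, not in degree $2$. The point is that $X_m^+$ is \emph{open} in $F_m$, and in the semiprojective setting its ``boundary at infinity'' is missing, so $(F_m,F_{m-1})$ is not the Thom pair of the bundle $X_m^+\to Z_m$. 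This is exactly where properness of $X$ is used in the classical Białynicki--Birula argument, and it is what breaks here. It also explains why you obtain mismatched shifts $2d_m^+$ versus $2d_m^-$: if the graded pieces really differed in degree, $H^*(X)$ and $H^*(\mathcal C)$ could not be isomorphic as graded groups, so no local Poincaré--Lefschetz manoeuvre can reconcile them while keeping $\iota^*$ as the comparison map. A further issue is that, because the closure orders on positive and negative cells are opposite, there is in general no inclusion $\mathcal C_m\subset F_m$, hence no restriction map $H^*(F_m)\to H^*(\mathcal C_m)$ to plug into your five-lemma ladder.

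A route that does work at the cell-by-cell level is to filter $X$ by the \emph{open} sets $U_m=X\setminus F_{m-1}$, in which $X_m^+$ is a smooth \emph{closed} subvariety of codimension $d_m^-$ (by the tangent decomposition in \cref{styczne}), and to use the Gysin sequence for $X_m^+\hookrightarrow U_m$. This produces the shift $2d_m^-$ on the $X$-side, matching the Thom shift for the negative cells on the proper core, and one then has $\bigcup_{i\ge m}X_i^-\subset U_m$ so that $\iota$ restricts and a genuine ladder of long exact sequences can be built. The argument in \cite{HV15} is organised somewhat differently, realising $\mathcal C$ as the central fibre of the proper $\Gm$-equivariant affinization $X\to\mathrm{Spec}\,H^0(X,\mathcal O_X)$ and deducing the isomorphism from the contraction of the base together with proper base change.
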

The reason why we care about the properness of the core $\mathcal{C}$ is that in our setting it implies projectivity.
\begin{Lemma}[{\cite[Tag 0B44]{stacks-project}}]
    If a scheme $X$ over $\kk$ is quasiprojective and proper, then it is projective.
\end{Lemma}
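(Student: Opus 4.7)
The plan is to unwind the definitions of quasi-projective and proper, and observe that the combination forces the defining immersion into projective space to be closed. Concretely, since $X$ is quasi-projective over $\kk$, by definition there exists an integer $N$ and a locally closed immersion
\[ i: X \hookrightarrow \mathbb{P}^N_{\kk}. \]
My goal reduces to showing that $i$ is in fact a closed immersion, since a closed subscheme of $\mathbb{P}^N_{\kk}$ is projective by definition.

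First I would upgrade $i$ from a morphism of $\kk$-schemes to a proper morphism. The structure map $g: \mathbb{P}^N_{\kk} \to \mathrm{Spec}(\kk)$ is separated, and the composition $g \circ i: X \to \mathrm{Spec}(\kk)$ is proper by the hypothesis that $X$ is proper over $\kk$. Applying the standard cancellation property for properness (if $g$ is separated and $g\circ i$ is proper, then $i$ is proper), I conclude that $i: X \to \mathbb{P}^N_{\kk}$ is proper.

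Next I would combine properness with the fact that $i$ is an immersion. A locally closed immersion factors as an open immersion into the scheme-theoretic image followed by a closed immersion; alternatively, $i$ is injective on underlying topological spaces and induces a surjection on structure sheaves onto its image. Since $i$ is proper, its image $i(X) \subset \mathbb{P}^N_{\kk}$ is closed. A proper immersion whose image is closed is a closed immersion (this is the standard fact that a proper monomorphism of schemes is a closed immersion, applied here to the immersion $i$). Therefore $X$ is isomorphic to a closed subscheme of $\mathbb{P}^N_{\kk}$ and is thus projective.

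There is no genuine obstacle; the result is purely a formal consequence of the definitions and the cancellation principle for proper morphisms. The only subtlety worth being careful about is distinguishing a locally closed immersion (the definition of quasi-projective adopted) from a closed immersion, and verifying that properness of $i$ collapses this distinction. All ingredients are standard and can be cited from \cite[Tag 0B44]{stacks-project} as indicated.
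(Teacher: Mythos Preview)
Your argument is correct and is the standard proof of this fact. Note, however, that the paper does not actually supply a proof of this lemma: it simply states the result and cites \cite[Tag 0B44]{stacks-project}, treating it as a known input. Your proposal therefore goes beyond what the paper does, filling in the cited reference with the expected cancellation argument (proper over $\kk$ plus separated target forces the immersion to be proper, hence closed).
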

This allows us to use the classical Białynicki-Birula decomposition for projective varieties.

\section{\texorpdfstring{Loci of $\qu$ and their dimensions}{Loci of qu and their dimensions}} \label{sec1}

In this section we introduce a decomposition of the Quot scheme $\qu$ into particular locally closed loci. We compute the order of growth with $n$ of their dimensions and use this to exhibit a single locus, whose complement has codimension converging to infinity. This is naturally motivated by purity theorems, for example the Morel-Voevodsky purity isomorphism \cite{MV99}, whose important corollary is the following:
\begin{Theorem}
    Let $Z \hookrightarrow X$ be a closed immersion of smooth connected $\kk$-schemes such that $\codime_X(Z) \geq r$. Then, letting $U = X \backslash Z$ be the complement of $Z$, the inclusion $\iota: U \hookrightarrow X$ induces an isomorphism in cohomology in degrees $< r$, that is $\iota^*: H^i(U) \simeq H^i(X)$ for $i<r$.
\end{Theorem}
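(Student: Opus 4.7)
The plan is to derive this as an immediate corollary of the Morel--Voevodsky purity isomorphism. Applied to the closed immersion $Z \hookrightarrow X$ of smooth $\kk$-schemes, purity yields a cofiber sequence
\begin{equation*}
    U_+ \longrightarrow X_+ \longrightarrow \mathrm{Th}(N_{Z/X})
\end{equation*}
in the pointed $\mathbb{A}^1$-homotopy category, where $N_{Z/X}$ is the normal bundle, whose rank is $c := \codime_X(Z) \geq r$. Passing to Betti realization and applying singular cohomology converts this into a long exact Gysin-type sequence
\begin{equation*}
    \cdots \to \tilde{H}^i(\mathrm{Th}(N_{Z/X})) \to H^i(X) \overset{\iota^*}{\longrightarrow} H^i(U) \to \tilde{H}^{i+1}(\mathrm{Th}(N_{Z/X})) \to \cdots,
\end{equation*}
in which the middle arrow is the pullback $\iota^*$ induced by $\iota: U \hookrightarrow X$.

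The next step is to invoke the Thom isomorphism $\tilde{H}^j(\mathrm{Th}(N_{Z/X})) \cong H^{j - 2c}(Z)$ for the rank $c$ bundle $N_{Z/X}$. Since $Z$ carries no singular cohomology in negative degrees, these Thom groups vanish for every $j < 2c$, which forces both flanking terms in the exact sequence to vanish whenever $i < 2c - 1$. Because $c \geq r \geq 1$, we have $2c - 1 \geq 2r - 1 \geq r$, so the vanishing certainly holds in the smaller range $i < r$. Exactness then gives the asserted isomorphism $\iota^*: H^i(X) \simeq H^i(U)$ for all $i < r$. (In fact the argument yields the sharper range $i \leq 2r - 2$, but the weaker form stated above is the one used in the sequel.)

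No serious obstacle arises: the argument is essentially formal once Morel--Voevodsky purity is granted. The only point requiring minor care is verifying that the first map of the cofiber sequence induces $\iota^*$ after Betti realization, which is part of the standard compatibility package for purity. Equivalently, one may sidestep motivic language altogether and deduce the statement directly from the classical Thom--Gysin long exact sequence associated to the smooth pair $(X, Z)$, which is where the computation genuinely takes place.
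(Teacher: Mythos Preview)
Your argument is correct and matches the paper's treatment exactly: the paper does not give its own proof of this statement but simply presents it as an important corollary of the Morel--Voevodsky purity isomorphism \cite{MV99}, which is precisely what you derive via the Thom--Gysin long exact sequence. Your observation that the argument actually yields the sharper range $i \leq 2c - 2$ is a nice bonus not mentioned in the paper.
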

%\begin{Remark}
%    A statement closer to the original theorem is that the inclusion $U \hookrightarrow X$ is $\mathbb{A}^{r-1}$-connected, which has stronger implications, including for other algebraic invariants. However, the introduction to these notions is beyond the scope of this thesis.
%\end{Remark}
 Consider a sequence of smooth, closed subschemes $Z_i \subseteq X_i$ with $X_i$ smooth for $i=1,2,\dots$ and a sequence of embeddings $X_i \hookrightarrow X_{i+1}$ compatible with $Z_i$. Assume that $\underset{i\rightarrow 
\infty}{\limi} \codime_{X_i}(Z_i) = \infty$. Then since cohomology commutes with filtered colimits, letting $Z = \underset{i\rightarrow 
\infty}{\colim} Z_i$ and $X = \underset{i\rightarrow 
\infty}{\colim} X_i$, the theorem above implies there is an isomorphism $H^*(X) \simeq H^*(X\backslash Z)$. While this does not apply in the setting of the non-smooth Quot scheme, it is our hope that a good understanding of its high dimensional loci will lead to further results on its cohomology and $\mathbb{A}^1$-homotopy.\\

The morphism $\pi: \yuniv \rightarrow \qu$ (see Proposition \ref{defpi}) allows us to define locally closed subsets of $\qu$ via invariant subsets of $\yuniv$, defined as in Definition \ref{definvariant}. For each point $y \in \yuniv$ we can consider the projection $pr_1(y) \in \lendi (V)^n$. By definition of $\yuniv$ this is an $n$-tuple of pairwise commutative endomorphisms in $\lendi (V)$ and we can consider the linear span of those $n$ elements in $\lendi (V)$. It is a subspace of commutative matrices. We use a shorthand notation $\langle pr_1(y) \rangle$ to refer to this linear subspace. Since the subsets defined by fixing the dimension of $\langle pr_1(y) \rangle$ are invariant under the change of basis of $V$, by Proposition \ref{pipreservesclosed} we conclude that they are locally closed in $\qu$. This motivates the following definition.
\begin{Definition}
    Define $\Znrl$ to be the locally closed subset of $x\in \qu$ defined by the invariant condition $\dime_{\kk}\langle pr_1(y) \rangle = l$ for all $y\in \pi^{-1}(x)$.
\end{Definition}
%$\dime_{\kk}\langle pr_1(y) \rangle = l$ for all $y\in \pi^{-1}(x)$.
Since $\langle pr_1(y) \rangle \subset \lendi (V)$, there is a disjoint union
\[ \qu = \bigsqcup\limits_{l=1}^{d^2} \Znrl . \]
Note that the subset $\Znrl$ is mapped into $\Znplurl$ under the map
\[ \iota_n: \qu \hookrightarrow \quplu. \]

We would like to bound the dimensions of $\Znrl$.
\begin{Lemma}
    Assume that $n \geq d^2$. Let
    \[ \ZnrlwY := \pi^{-1}(\Znrl) \subset \yuniv. \]
    Then
    \[ \dime(\ZnrlwY) \leq \frac{d^4}{4} + n \cdot l + r \cdot d. \]
\end{Lemma}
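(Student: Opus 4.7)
The plan is to estimate $\dim \tilde{Z}_n^l$ by fibering over the linear span $A = \langle X_1,\dots,X_n\rangle \subset \lendi(V)$, which is a commutative $l$-dimensional subspace whenever $((X_1,\dots,X_n),(v_1,\dots,v_r)) \in \tilde{Z}_n^l$. Since $\tilde{Z}_n^l$ is defined by the open condition $\dim \langle pr_1(y)\rangle = l$ inside a locus where the dimension is at most $l$, the assignment $y\mapsto \langle pr_1(y)\rangle$ defines a morphism $\phi: \tilde{Z}_n^l \to \Grass(l, d^2)$, and the bound will arise by combining an estimate on $\dim \phi(\tilde{Z}_n^l)$ with an estimate on the fiber dimension.

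First I would define $\phi: \tilde{Z}_n^l \to \Grass(l, \lendi(V))$ by $((X_1,\dots,X_n),(v_1,\dots,v_r)) \mapsto \langle X_1,\dots,X_n\rangle$, which is a morphism on $\tilde{Z}_n^l$ since on this locus the span has constant dimension $l$ (this is the standard Plücker-coordinate argument). Second, I would bound the image: its closure lies in $\Grass(l,d^2)$, so
\[ \dim \phi(\tilde{Z}_n^l) \leq l(d^2-l) \leq \tfrac{d^4}{4}, \]
with the second inequality coming from the AM-GM inequality applied to $l$ and $d^2-l$. Third, I would fix $A \in \phi(\tilde{Z}_n^l)$ and analyse the fiber: any $((X_1,\dots,X_n),(v_1,\dots,v_r)) \in \phi^{-1}(A)$ must have each $X_i \in A$ and each $v_j \in V$, so
\[ \phi^{-1}(A) \subseteq A^{\times n} \times V^{\times r}, \]
giving $\dim \phi^{-1}(A) \leq nl + rd$. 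Finally I would combine these via the fiber-dimension inequality $\dim \tilde{Z}_n^l \leq \dim \phi(\tilde{Z}_n^l) + \max_A \dim \phi^{-1}(A)$ to conclude the desired bound. The role of the assumption $n\geq d^2$ is to ensure $n\geq l$ always holds, so that the locus is meaningful (otherwise it is empty and the bound is vacuous).

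The step I expect to require the most care is the first, namely confirming that $\phi$ is a genuine morphism of schemes (not just a set-theoretic map): one needs the fact that on the constant-rank locus inside $\lendi(V)^{\times n}$, the assignment sending a tuple to its span is induced by Plücker coordinates and hence algebraic. Beyond this, the argument is essentially a clean application of the fiber dimension inequality, with the only remaining arithmetic observation being that $l(d^2-l) \leq d^4/4$, achieved at $l = d^2/2$. No classification results on commutative subspaces are needed at this stage, which is why the bound is this crude; sharper estimates, replacing $\Grass(l,d^2)$ by the actual variety of commutative $l$-subspaces, are deferred to later sections.
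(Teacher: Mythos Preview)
Your proof is correct and follows essentially the same approach as the paper: map to the Grassmannian $\Grass(l,d^2)$ via the span, bound the image by $l(d^2-l)\leq d^4/4$, bound each fiber by $A^{\times n}\times V^{\times r}$ of dimension $nl+rd$, and conclude via fiber dimension. If anything you are slightly more careful than the paper in justifying that $\phi$ is an actual morphism and in noting that the hypothesis $n\geq d^2$ is not really used for the upper bound.
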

\begin{proof}
    For every $y \in \ZnrlwY$ the span $\langle pr_1(y) \rangle$ defines a point in the Grassmanian $\Grass(l,\lendi (V)) = \Grass(l,d^2)$. Note that
    \[ \dime(\Grass(l,d^2)) = l \cdot (d^2-l) \leq \frac{d^4}{4}. \]
    For a fixed $l$-dimensional subspace $A \in \Grass(l,\lendi (V))$ a point in $\ZnrlwY$ with $\langle pr_1(y) \rangle = A$ is uniquely determined by a choice of $n$ spanning elements $X_i$ of $A$ and certain $r$ elements of $V$. Thus
    \begin{align*}
        \dime(\ZnrlwY) \leq \dime(\Grass(l,\lendi (V)) + n\cdot \dime(A) + r \cdot \dime(V) \leq  \frac{d^4}{4} + n \cdot l + r \cdot d.  \qedhere
    \end{align*}
\end{proof}
%We would thus like to estimate the dimensions of the loci
%\[ \ZnrlwY = \pi^{-1}(\Znrl) \subset \yuniv. \]
We would now like to find a lower bound for the dimension of $\ZnrlwY$.
\begin{Lemma}
    Assume $n \geq d^2$. Pick $l$ such that $\ZnrlwY \neq \emptyset$. Then 
    \[ \dime(\ZnrlwY) \geq n \cdot l + r \cdot d . \]
\end{Lemma}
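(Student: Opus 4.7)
The plan is to construct, starting from a single point of $\ZnrlwY$, a subvariety of $\yuniv$ of the claimed dimension that sits inside $\ZnrlwY$. Since $\ZnrlwY$ is non-empty, pick any point $y_0 \in \ZnrlwY$ and let $A := \langle pr_1(y_0) \rangle \subset \lendi(V)$. By assumption $A$ is an $l$-dimensional commutative subspace, and from the defining condition of $\yuniv$ the subspace $A$ is $r$-spanning in the sense of \cref{defrspanning}: the images $(v_1^0, \dots, v_r^0)$ of $pr_2(y_0)$ satisfy $\kk[X_1^0,\dots,X_n^0]\langle v_1^0,\dots,v_r^0\rangle = V$, and this subalgebra coincides with the subalgebra generated by $A$ itself.

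Next I would consider the auxiliary variety $A^n \times V^r$ together with its natural evaluation map to $\lendi(V)^n \times V^r$. This map obviously factors through $\yuniv$ on the locus where the pair satisfies the commutativity and spanning conditions, but since every $n$-tuple in $A^n$ is automatically pairwise commutative (because $A$ itself is), commutativity is free. Define $W \subset A^n \times V^r$ to be the locus of tuples $((X_1,\dots,X_n),(v_1,\dots,v_r))$ such that:
\begin{itemize}
\item[(i)] $X_1,\dots,X_n$ span $A$ as a $\kk$-vector space;
\item[(ii)] $A \cdot \langle v_1,\dots,v_r\rangle = V$.
\end{itemize}
Condition (i) is open in $A^n$ and non-empty because $n \geq d^2 \geq l = \dime A$. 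Condition (ii) is open in $V^r$ and non-empty because the tuple $(v_1^0,\dots,v_r^0)$ lies in it. Hence $W$ is a non-empty open subset of $A^n \times V^r$, so
\[
\dime W = n\cdot l + r\cdot d.
\]

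I would then check that $W$ embeds into $\ZnrlwY$. The image in $\yuniv$ is well-defined: commutativity holds in (i), and if (i) and (ii) are satisfied then $\kk[X_1,\dots,X_n]$ contains $A$ as a vector subspace, so $\kk[X_1,\dots,X_n]\langle v_1,\dots,v_r\rangle \supset A\langle v_1,\dots,v_r\rangle = V$, giving the spanning condition for $\yuniv$. Moreover $\langle pr_1\rangle$ of any such point equals $A$ by (i), so the image lies in $\ZnrlwY$. The map $W \hookrightarrow \yuniv$ is injective because all data (both the $X_i$ and the $v_j$) is recorded faithfully, giving $\dime \ZnrlwY \geq \dime W = n\cdot l + r\cdot d$.

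There is no real obstacle in this argument; the only point to watch is condition~(i), which requires $n \geq l$, and this is guaranteed by the hypothesis $n \geq d^2 \geq l$. The reason one cannot simply write $\ZnrlwY \supset A^n \times V^r$ as sets is that the spanning condition on the $v_j$'s does define a proper open locus, but since we only need a lower bound and this locus is non-empty, this causes no loss.
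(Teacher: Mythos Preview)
Your approach is the same as the paper's: fix $A = \langle pr_1(y_0)\rangle$ from a witness point, then observe that a non-empty open subset of $A^n \times V^r$ lands in $\ZnrlwY$.

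There is one slip to flag. Your condition (ii) should read $\kk[A]\cdot\langle v_1,\dots,v_r\rangle = V$ rather than $A\cdot\langle v_1,\dots,v_r\rangle = V$. The defining condition of $\yuniv$ only tells you that $(v_1^0,\dots,v_r^0)$ satisfies the former; the linear span $A$ need not contain the identity and may consist entirely of nilpotents, in which case your literal condition (ii) is empty and your claim that $(v_1^0,\dots,v_r^0)$ lies in it fails. With (ii) corrected to use $\kk[A]$, your verification that $W\hookrightarrow\yuniv$ still goes through, since condition (i) gives $\kk[X_1,\dots,X_n]=\kk[A]$ and hence $\kk[X_1,\dots,X_n]\langle v_1,\dots,v_r\rangle = V$ directly. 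This is also how the paper phrases the spanning condition in its proof.
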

\begin{proof}
    Fix an $l$-dimensional, $r$-spanning, commutative subspace $A$ of $\lendi(V)$. Such $A$ exists, since any linear span $\langle pr_1(y) \rangle$ for $y\in \ZnrlwY$ will suffice. Since $n\geq l$, a general choice of $n$ elements of $A$ determines the set of $X_1,\dots, X_n$ such that
    \[ \langle X_1,\dots , X_n \rangle = A. \]
    Since the condition of a tuple of vectors $(v_1,\dots,v_r) \in V^{\times r}$ satisfying $k[A] \cdot \langle v_1,\dots,v_r \rangle = V$ is open in $V^{\times r}$ and given by non-vanishing of certain minors, a general choice of $(v_1,\dots,v_r)$ satisfies it. Thus a general choice $\big((X_1,\dots,X_n),(v_1,\dots,v_r)\big)$ in $A^{\times n} \times V^{\times r}$ as above defines a point of $\ZnrlwY$, so
    \[ \dime(\ZnrlwY) \geq n \cdot \dime(A) + r \cdot d = n \cdot l + r \cdot d . \qedhere \]
\end{proof}
 %Since the subalgebras generated in $\lendi (V)$ by $X_1,\dots,X_n$ depend only on the linear span of $X_1,\dots,X_n$, the generating condition $k[X_1,\dots,X_n]U =V$ for $U = \langle v_1,\dots,v_r \rangle$ holds for all $X_1,\dots,X_n \in \lendi (V)$ with $\langle X_1,\dots, X_n \rangle = A$ if and only if it holds for any such $X_1,\dots,X_n$. Pick any point $y \in \ZnrlwY$. Then $((X_1,\dots,X_n),pr_2(y))$ defines a point of $\yuniv$, and so
%In fact it can be easily seen that the set of vectors satisfying the generating condition for $k[A]$ is an open subset of $V^{\times r}$, which would lead to a lower bound $n \cdot l + r \cdot d$, but we won't need that here.
For a point $x \in \qu$, all points in the fiber of $\pi$ over $x$ differ by the action of $GL(V)$ on $\yuniv$, and two points differing by the $GL(V)$ action map to the same point of $\qu$ under $\pi$. It follows from the spanning condition, that the action of $GL(V)$ is free. Since $\pi$ is surjective, this implies that
\[ \dime(\pi^{-1}(x)) = \dime(GL(V)) = d^2. \]
In particular
\[ \dime(\ZnrlwY) = \dime(\Znrl) + d^2 . \]
Combining this observation with the lemmas above leads to the following theorem.
\begin{Theorem}
    Let $l \in \{0,\dots,d^2\}$, $n \geq d^2$ be such that $\Znrl \neq \emptyset$. Then
    \[ \dime(\Znrl) = n \cdot l + r \cdot d - d^2 + C(n,r,d), \]
    where
    \[ 0 \leq C(n,r,d) \leq \frac{d^4}{4}. \]
\end{Theorem}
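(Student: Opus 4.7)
The proof is essentially a bookkeeping combination of the two preceding lemmas with the fiber-dimension computation, so the plan is short.

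First I would observe that the $GL(V)$-action on $\yuniv$ is free on the locus where the spanning condition holds: if $g \in GL(V)$ fixes a tuple $((X_1,\dots,X_n),(v_1,\dots,v_r))$, then $g$ commutes with every $X_i$ and fixes each $v_j$, so $g$ fixes $\kk[X_1,\dots,X_n]\langle v_1,\dots,v_r\rangle = V$, hence $g = \mathrm{Id}$. Since $\pi$ is surjective and its fibers are precisely the $GL(V)$-orbits (by the proposition from \cite{BaranovskyADHM}), every fiber has dimension $\dime(GL(V)) = d^2$. Because $\pi$ is smooth (in particular flat), this is genuinely the dimension of the preimage, and restricting $\pi$ to $\ZnrlwY = \pi^{-1}(\Znrl)$ yields
\[ \dime(\ZnrlwY) = \dime(\Znrl) + d^2. \]

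Next, since $\Znrl \neq \emptyset$ by hypothesis, the same is true of $\ZnrlwY$. Combining the two preceding lemmas gives the two-sided bound
\[ n \cdot l + r \cdot d \;\leq\; \dime(\ZnrlwY) \;\leq\; \frac{d^4}{4} + n \cdot l + r \cdot d. \]
Substituting the fiber-dimension identity above, this reads
\[ n \cdot l + r \cdot d - d^2 \;\leq\; \dime(\Znrl) \;\leq\; \frac{d^4}{4} + n \cdot l + r \cdot d - d^2. \]

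Finally, I would define
\[ C(n,r,d) := \dime(\Znrl) - \bigl( n \cdot l + r \cdot d - d^2 \bigr), \]
so that by construction $\dime(\Znrl) = n \cdot l + r \cdot d - d^2 + C(n,r,d)$, and the displayed bounds translate to $0 \leq C(n,r,d) \leq d^4/4$, completing the proof. There is no genuine obstacle here: the only subtle point worth spelling out is that $\pi$ being smooth (hence flat) and having $GL(V)$-orbits of constant dimension $d^2$ as fibers ensures $\dime(\ZnrlwY) = \dime(\Znrl) + d^2$ on the nose, so that the bounds on $\dime(\ZnrlwY)$ transfer cleanly to $\dime(\Znrl)$.
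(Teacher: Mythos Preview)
Your proof is correct and follows essentially the same approach as the paper: the paper also deduces the theorem by noting that the $GL(V)$-action on $\yuniv$ is free (from the spanning condition), so every fiber of $\pi$ has dimension $d^2$, whence $\dime(\ZnrlwY) = \dime(\Znrl) + d^2$, and then combines this with the two preceding lemmas bounding $\dime(\ZnrlwY)$. Your added remark that smoothness (hence flatness) of $\pi$ justifies the fiber-dimension identity is a welcome clarification the paper leaves implicit.
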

Thus the function $\dime\big(\Znrl\big)$ grows linearly in $n$, and the linear term depends only on the value of $l$. Define $l_{\max}(r)$ to be the largest integer in $\{0,\dots,d^2\}$ such that $\Zmax \neq \emptyset$. Then
\[ \qu \backslash \Zmax = \bigsqcup\limits_{l=1}^{l_{\max}(r)-1} \Znrl \]
and thus
\begin{align*}
    \dime(\qu \backslash \Zmax) \leq \underset{l<l_{\max}(r)}{max} \dime(\Znrl) \leq n \cdot (l_{\max}(r)-1) + \left(r \cdot d -d^2+\frac{d^4}{4}\right)
\end{align*}
This is of lower order in $n$ than
\[ \dime(\qu) \geq \dime(\Zmax) = n \cdot l_{\max}(r) + r\cdot d - d^2 + C(n,r,d), \]
therefore
\[ \underset{n\rightarrow\infty}{\limi}\codime\left(\qu \backslash \Zmax\right) = \infty. \]

We define loci in $\qu$ and $\yuniv$ corresponding to surjections where $X_j$ act via traceless operators. We show that the Quot scheme (respectively $\yuniv$) is isomorphic to the product of $\mathbb{A}^n$ with this traceless locus.
\begin{Definition} \label{defQnrd}
    We denote by $\Yhat$ the loci in $\yuniv$ defined on $\kk$-points by \[ \{ y\in \yuniv \mid pr_1(y) \text{ is a tuple of traceless matrices}\} . \]
    Let $\Qnrd:=\pi(\Yhat)$.
\end{Definition}
\begin{Lemma} \label{rozkladbezsladowe}
     With the definitions above $\yuniv \simeq \mathbb{A}^n \times \Yhat$ and
\begin{align}
    \qu \simeq \mathbb{A}^n \times \Qnrd.
\end{align}
\end{Lemma}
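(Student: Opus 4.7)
The plan is to construct an isomorphism $\yuniv \simeq \aff^n \times \Yhat$ using the trace decomposition $X = \tilde X + \tfrac{\mathrm{tr}(X)}{d}\, Id$ (which makes sense since $\mathrm{char}(\kk) = 0$), and then descend it along $\pi$ to obtain $\qu \simeq \aff^n \times \Qnrd$. First I would define mutually inverse morphisms
\[
\Phi: \yuniv \longrightarrow \aff^n \times \Yhat,\qquad
\big((X_i)_{i=1}^n, (v_j)_{j=1}^r\big) \longmapsto \left(\Big(\tfrac{\mathrm{tr}(X_i)}{d}\Big)_{i},\ \big((\tilde X_i)_i, (v_j)_j\big)\right),
\]
\[
\Psi: \aff^n \times \Yhat \longrightarrow \yuniv,\qquad
\big((a_i)_i, ((Y_i)_i, (v_j)_j)\big) \longmapsto \big((Y_i + a_i\, Id)_i,\ (v_j)_j\big),
\]
both given by polynomial formulas in the natural affine coordinates of $\lendi(V)^{\times n} \times V^{\times r}$ and $\aff^n$.

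Well-definedness reduces to two elementary observations. Commutativity is preserved since $Id$ commutes with everything, so $[Y_i + a_i Id,\, Y_j + a_j Id] = [Y_i, Y_j]$, and analogously for the $\tilde X_i$. The spanning condition is preserved because the unital subalgebras $\kk[X_1,\dots,X_n]$ and $\kk[\tilde X_1,\dots,\tilde X_n]$ of $\lendi(V)$ coincide: they contain the same identity, and their generators differ by scalar multiples of $Id$. Checking $\Phi \circ \Psi = \mathrm{id}$ and $\Psi \circ \Phi = \mathrm{id}$ is then immediate from the decomposition formula.

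For the descent to $\qu$, I would use that the trace of a matrix is invariant under conjugation, so $\Phi$ and $\Psi$ are $GL(V)$-equivariant with respect to the trivial action on $\aff^n$. In particular $\Yhat$ is $GL(V)$-stable, and $\pi$ restricts to a smooth surjection $\pi|_{\Yhat}: \Yhat \to \Qnrd$ whose fibers remain the free $GL(V)$-orbits. The composition $\aff^n \times \Yhat \overset{\Psi}{\to} \yuniv \overset{\pi}{\to} \qu$ is therefore constant on $GL(V)$-orbits, so by descent along the $GL(V)$-torsor $\pi$ (smooth surjective with fibers the free $GL(V)$-orbits, as recorded earlier) it factors through a unique morphism $\aff^n \times \Qnrd \to \qu$. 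The analogous argument applied to $\Phi$ yields an inverse, producing the claimed isomorphism.

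The main delicate point is this last descent step: one must verify that $\pi$ realises $\qu$ as a geometric quotient in a suitable topology, so that $GL(V)$-invariant morphisms descend to morphisms from $\qu$. Once this is in place, the content of the lemma amounts to the observation that one can canonically separate the scalar parts of the endomorphisms $X_i$ from their traceless parts, compatibly with the $GL(V)$-action.
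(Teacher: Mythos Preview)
Your proof is correct and follows essentially the same approach as the paper: both construct the isomorphism $\yuniv \simeq \aff^n \times \Yhat$ via the trace decomposition $X_i \mapsto (\tfrac{\mathrm{tr}(X_i)}{d},\, X_i - \tfrac{\mathrm{tr}(X_i)}{d}Id)$, observe that adding scalar multiples of $Id$ preserves both commutativity and the generated subalgebra (hence the spanning condition), and then descend along $\pi$ using conjugation-invariance of the trace. You are somewhat more explicit than the paper in flagging that the descent step requires $\pi$ to realise $\qu$ as a quotient by $GL(V)$; the paper simply asserts that the isomorphism descends.
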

\begin{proof}
    We first give an isomorphism for $\yuniv$ by viewing it as a subspace of $\lendi(V)^{\times r} \times V^{\times r}$, and by defining two morphisms between $\lendi(V)^{\times r} \times V^{\times r}$ and $\aff^n \times \lendi(V)^{\times r} \times V^{\times r}$. These morphisms are mutually inverse when restricted to $\yuniv$ and $\aff^n \times \Yhat$, respectively. In one direction the morphism is given by sending an $n$-tuple $(X_1,\dots,X_n)$ of matrices to
\begin{align*}
     \left(X_1-\frac{tr(X_1)}{d}Id,\dots,X_n-\frac{tr(X_n)}{d}Id\right) \in \lendi (V)^{\times n}
\end{align*} and a point
\begin{align*}
    \left(\frac{tr(X_1)}{d},\dots,\frac{tr(X_n)}{d}\right) \in \mathbb{A}^n,
\end{align*}
while acting on $V^{\times r}$ with identity.
The inverse morphism is given by sending a pair
\begin{align*}
    (\lambda_1,\dots,\lambda_n)\in \aff^n,(X_1,\dots,X_n) \in \lendi(V)^{\times n}\ \text{ to }\ (X_1+\lambda_1 Id,\dots,X_n+\lambda_nX_n),
\end{align*}
and acting on $V^{\times r}$ with identity. Since adding multiples of $Id$ to $X_i$ does not change the subalgebra they generate, the maps  restrict to isomorphisms. Since the trace and $Id$ are invariant under the change of basis, the isomorphism descends to an isomorphism on $\qu$.
\end{proof}
We use this in Section \ref{sec3coh} to replace loci of $\qu$ by products of $\mathbb{A}^n$ with loci of $\Qnrd$.\\

We observe that the constructions above are compatible with the morphisms $\iota_n$ described in the diagram \ref{embeddings}. This is because the traceless condition, as well as the dimension of the $\langle pr_1 (y) \rangle \subset \lendi (V)$, are preserved under extending the tuple $(X_1,\dots,X_n)$ of endomorphisms by $X_{n+1} = 0$. Thus we define
\begin{align*}
    \Zinfrl := \underset{n \rightarrow \infty}{\colim}\ \Znrl\ \text{ and }\  \Qnrdin := \underset{n \rightarrow \infty}{\colim}\ \Qnrd.
\end{align*}
This allows us to consider their colimits contained in the colimit of the Quot schemes under the sequence of embeddings \[ \qin = \underset{n\rightarrow\infty}{\colim}\ \qu . \] The same property holds for other loci we consider, including the ones containing $\Zmax$, whose cohomology we study in Section \ref{sec3coh}. This could make them natural candidates for the cohomology of $\qin$, assuming that we are able to prove a purity-type theorem for the non-smooth Quot scheme.\\

To better understand the topology of the Quot scheme we would like to determine what are the possible subspaces $A = \langle pr_1(y) \rangle \in \Grass(l_{\max},\lendi (V))$ for some $y\in \ZnrlwYmax$. Such a subspace $A$ has to be $l_{\max}(r)$-dimensional, commutative (Definition \ref{defcommute}), and the subalgebra of $\lendi(V)$ generated by $A$ has to be $r$-spanning (Definition \ref{defrspanning}). The $r$-spanning property follows since we can take $\langle v_1,\dots,v_r \rangle$ as the subspace witnessing the $r$-spanning condition. In Section \ref{sec2subspaces} we show that we can assume that $A$ is a subalgebra of $\lendi(V)$, which implies that the $r$-spanning condition has to be satisfied by $A$. We also determine $l_{\max}(r)$ for all possible values of $r$ and classify the subspaces $A$ above. This gives us an explicit description of the locus $\mathrm{Z}_n^{l_{\max}}$, which we use in the cohomology computations of Section \ref{sec3coh}.

\section{Maximal-dimensional commutative subspaces} \label{sec2subspaces}

Commutative subspaces of matrices have been long studied in algebra and algebraic geometry, including in regards to the study of Quot and Hilbert schemes, see, \cite{BaranovskyADHM}, \cite{BARANOVSKY01}, \cite{Gerstenhaber61commuting}, \cite{KOSIR1993}, \cite{noteadhm}, \cite{JS22}. In this section we determine exactly which subspaces of $\lendi (V)$ are of the form $\langle pr_1(y) \rangle$ for $y \in \ZnrlwYmax \subset \yuniv$. We use this later to define certain loci in $\qu$ containing $\Zmax$ and describe their cohomology. Our goal is to understand the explicit structure of highest-dimensional commutative subspaces of $\lendi (V)$ satisfying the $r$-spanning condition. We use the classical works of Schur and Jacobson \cite{J44} for the $r\geq \frac{d}{2}$ case, and prove some new results in the case $1< r < \frac{d+1}{2}$. We believe that they may have interesting applications for the study of varieties of commutative matrices.

\subsection{Maximal dimensional subspaces}

The matrices of the form $\langle pr_1(y) \rangle$ for $y \in \ZnrlwYmax$ are exactly those, which are maximal-dimensional among the $r$-spanning commutative spaces of matrices.

\begin{Lemma}
    Assume that $A \subset \lendi (V)$ is a maximal dimensional $r$-spanning, commutative subspace of matrices. Then it is equal to the subalgebra of $\lendi (V)$ it generates.
\end{Lemma}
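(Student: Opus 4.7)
The plan is to show directly that the subalgebra $\tilde{A}$ generated by $A$ inside $\lendi(V)$ is itself a commutative, $r$-spanning subspace, so that by maximality of $\dim A$ we force $\tilde{A} = A$.

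First I would let $\tilde{A} \subset \lendi(V)$ denote the (unital) subalgebra generated by $A$, and verify two properties. For commutativity, since $A$ is commutative by assumption and $\tilde{A}$ consists of polynomial expressions in elements of $A$, any two such expressions in pairwise-commuting generators again commute; equivalently, $\tilde{A}$ is the image of a quotient of the polynomial ring in a basis of $A$, hence a commutative ring. For the $r$-spanning condition, fix $U \subset V$ of dimension at most $r$ witnessing $A \cdot U = V$; since $A \subseteq \tilde{A}$, we immediately get
\[
\tilde{A} \cdot U \supseteq A \cdot U = V,
\]
so $\tilde{A}$ is $r$-spanning with the same $U$.

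Next I would conclude by the maximality assumption. We have produced a commutative, $r$-spanning subspace $\tilde{A}$ with $A \subseteq \tilde{A}$, hence $\dim \tilde{A} \geq \dim A$. Since $A$ is maximal-dimensional among commutative $r$-spanning subspaces of $\lendi(V)$, the inequality must be an equality, and combined with the inclusion $A \subseteq \tilde{A}$ this forces $A = \tilde{A}$. In particular $A$ is closed under multiplication and contains the identity, i.e.\ it is a subalgebra of $\lendi(V)$.

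There is no real obstacle here: the whole argument is a compatibility check between the two defining conditions (commutativity and $r$-spanning) and the passage $A \mapsto \tilde{A}$. The only thing one must be slightly careful about is confirming the convention for the $r$-spanning condition, namely that enlarging $A$ to $\tilde{A}$ preserves the same witness $U$, which is immediate from the inclusion and the definition in \Cref{defrspanning}. Thus the lemma reduces to the two-line observation that $\tilde{A}$ lies in the same class of subspaces as $A$.
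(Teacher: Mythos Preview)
Your proposal is correct and follows essentially the same approach as the paper's proof: both observe that the generated subalgebra $\tilde{A}$ is again commutative and $r$-spanning (the latter because $A\subseteq\tilde{A}$), and then use maximality of $\dim A$ to force $A=\tilde{A}$. The only cosmetic difference is that the paper phrases this by contradiction (assuming $\tilde{A}\supsetneq A$) while you argue directly, which is not a substantive distinction.
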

\begin{proof}
    Assume that this were not the case, denote by $\tilde{A} \supsetneq A$ the subalgebra of $\lendi (V)$ generated by $A$. By assumption $\dime(\tilde{A}) > \dime(A)$. By distributivity and associativity of matrix multiplication, the elements of $\tilde{A}$ are pairwise commutative. Since $A \subset \tilde{A}$, the $r$-spanning condition is automatically satisfied.
\end{proof}
We remark on the following interpretation of the $1$-spanning condition.
\begin{Lemma}
    Let $A$ be a $1$-spanning, commutative subalgebra of $\lendi (V)$. Then $\dime(A)=d$.
\end{Lemma}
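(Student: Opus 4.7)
The plan is to fix a vector $v \in V$ realising the $1$-spanning condition, so that $A \cdot v = V$, and then exhibit a linear isomorphism $A \simeq V$ via the evaluation map $\phi_v: A \to V$ defined by $X \mapsto X v$. Surjectivity of $\phi_v$ is precisely the $1$-spanning condition, so it immediately yields $\dime(A) \geq d$.

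The content of the lemma is therefore the reverse inequality, i.e.\ injectivity of $\phi_v$, and this is where commutativity enters. Suppose $X \in A$ satisfies $X v = 0$. Using $A \cdot v = V$, any $w \in V$ can be written as $w = Y v$ for some $Y \in A$, and then
\[ X w = X Y v = Y X v = 0, \]
where the middle equality uses that $A$ is commutative. Hence $X$ annihilates all of $V$, so $X = 0$ in $\lendi(V)$.

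Combining injectivity and surjectivity of $\phi_v$ gives $\dime(A) = \dime(V) = d$. The only subtle point is that we must use the subalgebra structure (or at least commutativity with a cyclic vector) to get injectivity; without commutativity, $\phi_v$ need not be injective. Since the preceding lemma already guarantees that a maximal-dimensional $r$-spanning commutative subspace is a subalgebra, the argument applies in the setting of interest. I do not expect any genuine obstacle in this proof — the step requiring the most care is simply recording that the commutativity of $A$ is used only via the identity $XY = YX$ on the single cyclic vector $v$.
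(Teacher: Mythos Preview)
Your proof is correct and is essentially the same argument as the paper's: both show that the evaluation map $a \mapsto a v$ at a cyclic vector $v$ is surjective (by $1$-spanning) and injective (by commutativity, since $Xv=0$ and $w=Yv$ give $Xw=YXv=0$). The paper phrases this by choosing $a_1,\dots,a_d$ with $a_iv$ a basis and showing they span $A$, but the underlying linear-algebra content is identical.
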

\begin{proof}
    Pick an element $v\in V$, such that $\langle v \rangle$ witnesses the $1$-spanning condition. There exist elements $a_1,\dots,a_{d} \in A$ with $a_1=Id$ such that $\{a_1v, \dots, a_{d}v\}$ form a basis of $V$, implying the lower bound. We claim that $a_1,\dots,a_{d}$ form a basis of $A$. Pick an element $a\in A$, there exist constants $\lambda_i$ for $i=1,\dots, d$ so that $(a-\sum\limits_{i=1}^d \lambda_ia_i)v=0$. Then for $j=1,\dots,d$
    \[ \left(a-\sum\limits_{i=1}^d \lambda_ia_i\right)\cdot (a_jv) = a_j\cdot \left(a-\sum\limits_{i=1}^d \lambda_ia_i\right)v=0. \]
    Thus $a = \sum\limits_{i=1}^d \lambda_ia_i$, proving the claim.
\end{proof}
Conversely, each $d$-dimensional $\kk$-algebra $A$ gives rise to a $1$-spanning space of commutative matrices in $End(V)$, whose elements are given by the action of $A$ on itself, via multiplication.\\

We introduce a special class of subspaces of $\lendi (V)$, which will be useful in describing the subspaces of interest of this section.

\begin{Definition} \label{Wkldef}
    Fix a basis of $V$. We denote by $W_{k,l} \subset \lendi (V)$ the subspace of all matrices whose only nonzero entries lie in the first $k$ rows and last $l$ columns.
\end{Definition}

We now recall a theorem of Schur and Jacobson on the maximal dimensional subspaces of $\lendi (V)$.

\begin{Theorem}[{\cite[Theorem 3]{J44}}]
\label{jac}
    Let $d>3$. Then if $A$ is a subalgebra of $\lendi (V)$ of maximal possible dimension, then $A$ is equivalent, milto conjugation, to $\langle Id \rangle \oplus A'$, where if $d=2k$, then $A' = W_{k,k}$, and if $d=2k+1$, then
    \[ A' = \begin{cases} W_{k+1,k} \\W_{k,k+1}
    \end{cases}. \]
    %$A' = W_{k+1,k}$, or $A' = W_{k,k+1}$.
\end{Theorem}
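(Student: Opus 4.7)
The plan is to establish the Schur dimension bound $\dime(A) \leq \lfloor d^2/4 \rfloor + 1$ on commutative subalgebras $A \subset \lendi(V)$, verify that the three exhibited subalgebras attain it, and then extract the classification from the rigidity of equality in the bound.

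I would first check that the listed algebras are commutative and of the right dimension. For $d = 2k$, $W_{k,k}$ consists of matrices supported in the top-right $k \times k$ block, and the calculation $(XY)_{ij} = \sum_m X_{im} Y_{mj} = 0$ holds because the first factor forces $m > k$ while the second forces $m \leq k$. Hence $W_{k,k}^2 = 0$, so $\langle Id \rangle \oplus W_{k,k}$ is a commutative subalgebra of dimension $k^2 + 1 = \lfloor d^2/4 \rfloor + 1$. The cases $W_{k+1,k}$ and $W_{k,k+1}$ for $d = 2k+1$ proceed identically, with the equality $(k+1) + k = d$ again forcing the nilpotent products to vanish.

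For the upper bound I would induct on $d$. Since $\kk$ is algebraically closed and $A$ is commutative, the elements of $A$ share a common eigenvector, so after conjugation $A$ lies in the upper triangular matrices. If $V$ splits nontrivially as an $A$-module $V = V_1 \oplus V_2$, then $A$ embeds into $A|_{V_1} \oplus A|_{V_2}$ and combining the inductive bound with a short convexity estimate delivers $\dime(A) \leq \lfloor d^2/4 \rfloor + 1$. If instead $V$ is $A$-indecomposable, then $A/\mathrm{rad}(A) = \kk$, so $A = \kk \cdot Id \oplus N$ with $N$ a commutative nilpotent subspace. One bounds $\dime(N)$ by analysing the filtration $V \supset NV \supset N^2 V \supset \cdots$: with $d_i = \dime(N^i V / N^{i+1} V)$, multiplication embeds $N$ into $\bigoplus_i \Hom(N^i V / N^{i+1} V,\, N^{i+1} V / N^{i+2} V)$, and an AM--GM estimate on $\sum_i d_i d_{i+1}$ subject to $\sum_i d_i = d$ gives $\dime(N) \leq \lfloor d^2/4 \rfloor$.

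To obtain the classification I would trace equality through the inequalities above. Equality in AM--GM rigidly forces the filtration to have only two nontrivial pieces, of dimensions $\{k, k\}$ when $d = 2k$ and $\{k, k+1\}$ when $d = 2k + 1$, and forces $N$ to be the full $\Hom$-space between them; picking a basis adapted to the splitting conjugates $N$ onto $W_{k,k}$, respectively onto $W_{k+1,k}$ or $W_{k,k+1}$ depending on which piece is larger. The main obstacle will be the decomposable branch of the induction: one must show that saturating the convexity inequality there forces one summand to be one-dimensional and to be absorbed into the $\langle Id \rangle$ component of the larger block, so that no new extremisers arise from nontrivial splittings. This step is precisely what fails for $d \leq 3$, where small-dimensional diagonal subalgebras happen to match the bound, which explains why the hypothesis $d > 3$ is needed.
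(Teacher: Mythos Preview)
The paper does not prove this theorem; it is quoted as a classical result of Schur--Jacobson with a citation to \cite{J44} and used as a black box. So there is no proof in the paper to compare against, and I evaluate your proposal on its own.

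There is a genuine gap in your upper-bound argument for the indecomposable case. The map
\[
N \longrightarrow \bigoplus_i \Hom\bigl(N^i V / N^{i+1} V,\ N^{i+1} V / N^{i+2} V\bigr),\qquad n \longmapsto (\bar n_i)_i,
\]
is \emph{not} an embedding in general. Take $A = \kk[x]/(x^4)$ acting on $V = A$ by multiplication: then $N = (x)$ is three-dimensional, the filtration has $N^i V = (x^i)$ with all graded pieces one-dimensional, and the element $x^2 \in N$ sends each $N^i V$ into $N^{i+2} V$, hence induces the zero map on every graded piece. (The same holds for $x^3$; only $x$ has nonzero image, so the image of your map is one-dimensional while $\dime N = 3$.) What this map actually detects is the degree-one component $N/N^2$ of the associated graded algebra, not $N$ itself, so it cannot bound $\dime N$ by $\sum_i d_i d_{i+1}$.

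Since the bound is not established, the subsequent equality analysis---forcing the filtration to length two and reading off $W_{k,k}$ or $W_{k\pm 1,k}$---is unsupported as well. Jacobson's argument runs along a different line: one fixes a nilpotent $X \in N$ of maximal rank and shows, using that maximality together with commutativity, that every $Y \in N$ must send $\lker X$ into $\imag X$; this constrains $N$ enough to yield both the Schur bound and, in the equality case, the square-zero structure that pins down the conjugacy class.
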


Since the subspaces $W_{d-k,k}$ are square-zero for all $1 \leq k < d$, the subalgebra generated by $W_{d-k,k}$ is equal exactly to $\langle Id \rangle + W_{d-k,k}$. Since the image of every matrix in $W_{d-k,k}$ lies in the span of the first $d-k$ basis vectors, it is clear that $W_{d-k,k}$ are $k$-spanning, and are not $l$-spanning for any $l < k$. This yields the following corollary.

\begin{Corollary}
    Let $d>3$ and $r\geq \frac{d}{2}$.
    \begin{enumerate}
        \item If $d=2k$, then a maximal dimensional commutative $r$-spanning subspace is, up to conjugation, equivalent to $\langle Id \rangle \oplus W_{k,k}$. In particular $l_{\max}(r) = \frac{d^2}{4}+1$.
        \item If $d=2k+1$, then a maximal dimensional commutative $r$-spanning subspace is, up to conjugation, equivalent to $\langle Id \rangle \oplus W_{k+1,k}$, or $\langle Id \rangle \oplus W_{k,k+1}$. In particular $l_{\max}(r) = \frac{d \cdot (d+1)}{4}+1$.
    \end{enumerate}
    Furthermore if $d=2k+1$ and $r=k$, then a maximal dimensional commutative $r$-spanning subspace is up to conjugation equivalent to $\langle Id \rangle \oplus W_{k+1,k}$.
\end{Corollary}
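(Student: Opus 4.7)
The plan is to combine the preceding lemma, which forces any maximal-dimensional $r$-spanning commutative subspace $A\subseteq \lendi(V)$ to coincide with the subalgebra it generates, with Jacobson's classification (Theorem \ref{jac}). Observe that if $Id\notin A$ then $A\oplus\langle Id\rangle$ is still commutative and $r$-spanning of strictly larger dimension, so we may assume $Id\in A$ and $A$ is a unital commutative subalgebra. Jacobson then gives that every such subalgebra has dimension at most $\lfloor d^2/4\rfloor+1$, with equality forcing $A$ to be conjugate to $\langle Id\rangle\oplus W_{k,k}$ for $d=2k$, or to one of $\langle Id\rangle\oplus W_{k+1,k}$ and $\langle Id\rangle\oplus W_{k,k+1}$ for $d=2k+1$. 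It therefore suffices to determine which of these candidates are $r$-spanning and to check that at least one attains the Jacobson bound.

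Second, I would pin down the minimal spanning number of each candidate by direct linear algebra. Every element of $W_{k,l}$ depends only on the last $l$ coordinates of its argument and maps $V$ into $\mathrm{span}(e_1,\ldots,e_k)$. For $k+l=d$, taking $U=\mathrm{span}(e_{k+1},\ldots,e_d)$, which has dimension $l$, yields
\[
(\langle Id\rangle\oplus W_{k,l})\cdot U \;=\; U + W_{k,l}\cdot U \;=\; \mathrm{span}(e_{k+1},\ldots,e_d) + \mathrm{span}(e_1,\ldots,e_k) \;=\; V,
\]
so $\langle Id\rangle\oplus W_{k,l}$ is $l$-spanning. Conversely, for any $U$ with $\dim U<l$, the image of $U$ in the $l$-dimensional quotient $V/\mathrm{span}(e_1,\ldots,e_k)$ is proper, hence $(\langle Id\rangle\oplus W_{k,l})\cdot U\subsetneq V$. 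Thus the minimal spanning number of $\langle Id\rangle\oplus W_{k,l}$ is exactly $l$.

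Combining with the hypothesis: under $r\geq d/2$, for $d=2k$ we have $r\geq k$ so $\langle Id\rangle\oplus W_{k,k}$ is $r$-spanning; for $d=2k+1$ we have $r\geq k+1$, and both $\langle Id\rangle\oplus W_{k+1,k}$ ($k$-spanning) and $\langle Id\rangle\oplus W_{k,k+1}$ ($(k+1)$-spanning) are $r$-spanning. In every case a Jacobson-maximal subalgebra is an $r$-spanning candidate, so $l_{\max}(r)$ attains the Jacobson bound and any maximizer must be conjugate to one of the listed subalgebras. The stated values of $l_{\max}(r)$ then follow from $\dim(\langle Id\rangle\oplus W_{k,l})=1+kl$. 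For the final clause ($d=2k+1$, $r=k$), the spanning-number calculation rules out $\langle Id\rangle\oplus W_{k,k+1}$ (whose minimal spanning number $k+1$ exceeds $k$), so only $\langle Id\rangle\oplus W_{k+1,k}$ survives as a maximizer; since it is $k$-spanning of dimension $k(k+1)+1$, the Jacobson bound is still attained and the same conjugacy conclusion follows.

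I expect the main technical content to be the minimal spanning number computation for $\langle Id\rangle\oplus W_{k,l}$, which is nevertheless a short linear algebra exercise; the remainder is a clean combination of the preceding lemma with Theorem \ref{jac}.
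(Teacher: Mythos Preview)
Your proposal is correct and follows essentially the same route as the paper. The paper's argument, given in the paragraph immediately preceding the corollary, is exactly: observe that $W_{d-k,k}$ is square-zero so $\langle Id\rangle + W_{d-k,k}$ is already a subalgebra, note that the image of every element lies in the span of the first $d-k$ basis vectors so the minimal spanning number is $k$, and then read off the corollary from Jacobson's classification. Your write-up is simply a more detailed version of the same computation, including an explicit treatment of the ``Furthermore'' clause for $d=2k+1$, $r=k$.
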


\subsection{\texorpdfstring{The value of $l_{\max}(r)$ for $1<r<\frac{d+1}{2}$}{The value of lmax}}

The corollary above does not cover the whole range of possible $r$. The next theorem allows us to understand the cases of $1<r<\frac{d+1}{2}$. In particular this implies that $d > 3$. To our knowledge the part $(b)$ of the theorem is new, while $(a)$ follows rather easily from already available results.

\begin{Theorem}
\label{sup}
    Let $1<r<\frac{d+1}{2}$ and let $A$ be an $r$-spanning commutative subalgebra of $\lendi (V)$. Then
    \begin{enumerate}
        \item[(a)] $\dime(A) \leq r \cdot (d-r) +1$.
        \item[(b)] If $\dime(A) = r \cdot (d-r) +1$, then $A$ is equivalent, up to conjugation, to $\langle Id \rangle + W_{d-r,r}$.
    \end{enumerate}
\end{Theorem}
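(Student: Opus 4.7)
The plan is to reduce both parts to the case where $A$ is a \emph{local} algebra acting faithfully on $V$ with $r$ equal to the minimal number of $A$-module generators of $V$. Since $A$ is $r'$-spanning for every $r' \geq r$ and $x\mapsto x(d-x)+1$ is strictly increasing on $[0,d/2]$, I may replace $r$ by the minimum such integer without weakening the bound. Writing $A = \prod_i A_i$ as a product of its local components with induced $V = \bigoplus V_i$, faithfulness forces each $V_i \neq 0$, and the minimality of $r$ translates to $r = \max_i r_i$, where $r_i$ is the minimal number of generators of $V_i$ over $A_i$. Granting the local bound $\dime A_i \leq r_i(d_i-r_i)+1$ (proved in the next paragraph), an elementary estimate using $r_i \leq r$, $\sum_i r_i \geq r + (N-1)$ (each $r_i \geq 1$), and $r > 1$ yields $\dime A \leq r(d-r)+1$, with equality forcing a single local component.

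For the local bound, let $\mathfrak{m}$ be the maximal ideal of $A$ and pick $U \subset V$ of dimension $r$ with $AU = V$. Nakayama's lemma combined with the minimality of $r$ yields $U \cap \mathfrak{m}V = 0$. The linear map
\[
\phi\colon A \longrightarrow \Hom(U, V/U),\qquad a \longmapsto \bigl(u\mapsto au + U\bigr),
\]
has kernel $B = \{a\in A : aU \subseteq U\}$. For $a = \lambda\cdot Id + m \in B$ with $m \in \mathfrak{m}$, the inclusion $mU \subseteq U \cap \mathfrak{m}V = 0$ combined with $mV = A(mU) = 0$ and faithfulness force $m = 0$; hence $B = \si$ and $\dime A \leq 1 + r(d-r)$, which yields (a).

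In the equality case $\dime A = r(d-r)+1$, the reduction forces $A$ local and the restriction $\phi|_{\mathfrak{m}}\colon \mathfrak{m} \xrightarrow{\sim} \Hom(U, V/U)$ is a linear isomorphism. Setting $W := \mathfrak{m}V$, which has dimension $d-r$, gives $V = U \oplus W$ and $mV \subseteq W$ for every $m \in \mathfrak{m}$. Writing each $m\in\mathfrak{m}$ in block form with respect to this decomposition as
\[
m = \begin{pmatrix} 0 & 0 \\ \alpha(m) & \beta(m) \end{pmatrix},\qquad \alpha(m)\in\Hom(U,W),\quad \beta(m)\in\lendi(W),
\]
the isomorphism reads $m \leftrightarrow \alpha(m)$, so $\beta = f\circ\alpha$ for a unique linear $f\colon \Hom(U,W)\to \lendi(W)$. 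Commutativity $m_1m_2 = m_2m_1$ unpacks in the lower-left block to the identity $f(\alpha_1)\alpha_2 = f(\alpha_2)\alpha_1$ for all $\alpha_1,\alpha_2\in\Hom(U,W)$. Fixing a basis $u_1,\dots,u_r$ of $U$ and setting $K_i = \{\alpha:\alpha(u_i)=0\}$, evaluating the identity at $u_i$ with $\alpha_1\in K_i$ shows $f(\alpha_1)(\alpha_2(u_i)) = 0$ for every $\alpha_2$; since $\alpha_2(u_i)$ ranges over all of $W$, $f(\alpha_1) = 0$. Thus $f$ vanishes on each $K_i$, and a codimension count exploiting $r\geq 2$ gives $K_1+K_2 = \Hom(U,W)$, forcing $f\equiv 0$ and $\beta\equiv 0$. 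Therefore $\mathfrak{m}W = 0$ and $\mathfrak{m}^2 = 0$. In a basis of $V$ whose first $d-r$ vectors lie in $W$ and last $r$ in $U$, $\mathfrak{m}\subseteq W_{d-r,r}$ inside $\lendi(V)$, and equality of dimensions gives $A = \si + W_{d-r,r}$ up to conjugation, completing (b).

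The delicate step I anticipate is the deduction $\mathfrak{m}^2 = 0$ in the equality case; this passes through the functional equation $f(\alpha_1)\alpha_2 = f(\alpha_2)\alpha_1$ and uses the hypothesis $r \geq 2$ in an essential way, since for $r=1$ only a single evaluation map is available, the sum $K_1 + K_2$ is not defined, and nontrivial $\beta$-components cannot be ruled out, which is consistent with the well-known non-uniqueness of maximal-dimensional commutative subalgebras in the cyclic case.
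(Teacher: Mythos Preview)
Your proof is correct. For part~(a) the two arguments are close in spirit: the paper bounds $\dime\mathfrak m$ via Cowsik's inequality $\len(\Hom_A(V,\mathfrak mV))\le \mu(V)\len(\mathfrak mV)$ and then reduces the non-local case by an inductive pairwise inequality, while you obtain the same bound from the kernel computation for the $\kk$-linear map $A\to\Hom(U,V/U)$ and a single arithmetic estimate $\sum_i(r_i(d_i-r_i)+1)\le r(d-r)+1-(r-1)(N-1)$.

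For part~(b) the routes genuinely diverge. The paper first produces an adapted basis of $V$ in which $\mathfrak m$ is strictly upper-triangular and $\mathfrak mV$ occupies the first $d-r$ slots, then lifts the elementary maps $e_{i,j}\in\Hom_\kk(W,\mathfrak mV)$ to a basis $\tilde e_{i,j}$ of $\mathfrak m$ and checks $\tilde e_{k,l}\tilde e_{j,s}=\tilde e_{j,s}\tilde e_{k,l}$ on basis vectors to force $\mathfrak m\cdot\mathfrak mV=0$. You instead read off from the block form a single functional identity $f(\alpha_1)\alpha_2=f(\alpha_2)\alpha_1$ on $\Hom(U,W)$ and kill $f$ by the codimension observation $K_1+K_2=\Hom(U,W)$, which needs only $r\ge 2$. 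Your argument is shorter and makes the role of the hypothesis $r>1$ fully transparent, at the cost of not producing the explicit upper-triangular basis; the paper's version keeps everything inside the $\Hom_A$ framework it already set up for~(a) and yields that basis as a byproduct.
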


In the proof we use a sequence of technical lemmas, which bound the length of an algebra $A$ as a module over itself. For any $\kk$-algebra $A$ and an $A$-module $M$ we have $\len(M) \leq \dime_{\kk}(M)$. However in our setting, where we assume that $\bar{\kk} = \kk$, equality holds.
\begin{Lemma}
    Let $A$ be a local, finite-dimensional $\kk$-algebra with maximal ideal $\mathfrak{m}$ and let $M$ be an $A$-module, finite dimensional as a $\kk$-vector space. Then $\len(M) = dim_{\kk}(M)$.
\end{Lemma}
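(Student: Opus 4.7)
The plan is to reduce this to the standard fact that in a composition series of $M$ as an $A$-module, every simple subquotient is forced to be one-dimensional over $\kk$. The key input is that $\kk$ is algebraically closed.

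First I would observe that since $A$ is a local, finite-dimensional $\kk$-algebra, the residue field $A/\mathfrak{m}$ is a field extension of $\kk$ which is itself finite-dimensional over $\kk$. Because $\kk$ is algebraically closed, this forces $A/\mathfrak{m} \simeq \kk$ as $\kk$-algebras.

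Next, because $M$ is finite-dimensional over $\kk$, it is in particular of finite length over $A$, so there exists a composition series
\[ 0 = M_0 \subsetneq M_1 \subsetneq \cdots \subsetneq M_\ell = M \]
with $\ell = \len(M)$ and each subquotient $M_i/M_{i-1}$ a simple $A$-module. Since $A$ is local with maximal ideal $\mathfrak{m}$, every simple $A$-module is isomorphic to $A/\mathfrak{m} \simeq \kk$. Therefore each $M_i/M_{i-1}$ is a one-dimensional $\kk$-vector space.

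Finally, additivity of $\kk$-dimension on short exact sequences gives
\[ \dim_{\kk}(M) = \sum_{i=1}^{\ell} \dim_{\kk}(M_i/M_{i-1}) = \ell = \len(M). \]
There is no real obstacle here; the only subtlety is remembering that the identification $A/\mathfrak{m} \simeq \kk$ needs algebraic closedness of $\kk$, which is built into the global assumptions of the paper.
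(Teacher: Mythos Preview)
Your proof is correct and follows essentially the same approach as the paper: both reduce to showing that every simple $A$-module is one-dimensional over $\kk$, using that $A/\mathfrak{m}\simeq\kk$ since $\kk$ is algebraically closed. The only cosmetic difference is that the paper invokes Nakayama's lemma explicitly to see that $\mathfrak{m}$ annihilates a simple module, whereas you cite directly the classification of simple modules over a local ring.
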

\begin{proof}
    It is enough to prove that all simple $A$-modules are $1$-dimensional over $\kk$. Let $M \neq 0$ be such a module. Then by Nakayama's lemma $\mathfrak{m}M$ is a proper submodule of $M$, thus $\mathfrak{m}M=0$. Therefore $M$ is an $A/\mathfrak{m} \simeq \kk$-vector space, so
    \[ \len_A(M) = \len_{\kk}(M) = \dime_{\kk}(M). \qedhere \]
\end{proof}
\begin{Lemma} \label{nakajama}
    Let $A$ be a local ring with maximal ideal $\mathfrak{m}$ and let $M$ be an $A$-module. Then
    \[ \len(M) = \len(\mathfrak{m}M) + \mu(M), \]
    where $\mu(M)$ is the minimal number of generators of $M$ as an $A$-module.
\end{Lemma}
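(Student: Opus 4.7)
The plan is to use the short exact sequence
\[ 0 \to \mathfrak{m}M \to M \to M/\mathfrak{m}M \to 0 \]
together with additivity of length, so the whole lemma reduces to the identification $\len_A(M/\mathfrak{m}M) = \mu(M)$. First I would invoke additivity of length on the sequence above, which immediately gives $\len(M) = \len(\mathfrak{m}M) + \len(M/\mathfrak{m}M)$, so it only remains to show that $\len(M/\mathfrak{m}M) = \mu(M)$.

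For that identification I would argue in two steps. Since $\mathfrak{m}$ annihilates $M/\mathfrak{m}M$, it carries a natural structure of an $A/\mathfrak{m}$-module, and $A$-submodules coincide with $A/\mathfrak{m}$-submodules; hence $\len_A(M/\mathfrak{m}M) = \len_{A/\mathfrak{m}}(M/\mathfrak{m}M)$. Because $A$ is local, $A/\mathfrak{m}$ is a field, so by the preceding lemma (applied with $A/\mathfrak{m}$ playing the role of $\kk$) this length equals $\dim_{A/\mathfrak{m}}(M/\mathfrak{m}M)$. Finally, Nakayama's lemma gives $\dim_{A/\mathfrak{m}}(M/\mathfrak{m}M) = \mu(M)$: a set $\{m_1,\dots,m_k\} \subset M$ generates $M$ as an $A$-module if and only if its image in $M/\mathfrak{m}M$ generates it as an $A/\mathfrak{m}$-vector space, so the minimal cardinality of a generating set is precisely the dimension of $M/\mathfrak{m}M$ over the residue field.

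The main conceptual point — and really the only subtle step — is the middle one, namely the translation between length over $A$ and dimension over $A/\mathfrak{m}$. I do not expect any serious obstacle; the only thing to double-check is that length is additive in the sense we need (this is automatic when $\len(M)$ is finite, which is the case of interest since the lemma is applied to the finite-dimensional algebras $A$ arising in the previous section), and that the preceding lemma genuinely applies to the residue field $A/\mathfrak{m}$. Putting the three identifications together yields $\len(M) = \len(\mathfrak{m}M) + \mu(M)$, as desired.
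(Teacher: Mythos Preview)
Your proposal is correct and follows essentially the same approach as the paper: both use additivity of length on the short exact sequence $0 \to \mathfrak{m}M \to M \to M/\mathfrak{m}M \to 0$, then identify $\len(M/\mathfrak{m}M)$ with $\mu(M)$ via Nakayama and the fact that over the residue field length equals dimension. The only difference is presentational---the paper phrases the last step as $\mu(M) = \mu(M/\mathfrak{m}M)$ and then notes that length and minimal number of generators both coincide with $\dim_{A/\mathfrak{m}}$ for an $A/\mathfrak{m}$-vector space, whereas you go through $\len_A = \len_{A/\mathfrak{m}} = \dim_{A/\mathfrak{m}} = \mu$; these are the same argument.
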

\begin{proof}
    By definition length is additive in short exact sequences. By the short exact sequence
    \begin{align*}
        0 \rightarrow \mathfrak{m}M \rightarrow M \rightarrow M/\mathfrak{m}M \rightarrow 0
    \end{align*}
    it is enough to prove that $\mu(M) = \len(M/\mathfrak{m}M)$. By Nakayama's lemma $\mu(M) = \mu(M/\mathfrak{m}M)$. But for a module over a local ring $A$ with the maximal ideal $\mathfrak{m}$ acting by zero (that it, an $A/\mathfrak{m}$-vector space) the length and the minimal number of generators agree with the dimension over the field $\kk = A/\mathfrak{m}$, so they are equal.
\end{proof}

\begin{Lemma}[{\cite[Lemma 2]{C93}}] \label{cow}
    Let $A$ be a commutative Artinian ring and let $M,N$ be $A$-modules, $N$ having finite length. Denote by $\mu(M)$ the minimal number of generators of $M$ as an $A$-module. Then,
    \begin{align*}
        \len(\Hom_A(M,N)) \leq \mu(M) \cdot \len(N).
    \end{align*}
\end{Lemma}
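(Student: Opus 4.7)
The plan is to reduce the bound to the simple case where $M$ is free, and then use the fact that length is subadditive under submodules of finite-length modules. The key observation is that for a commutative $A$, the $\Hom$-set $\Hom_A(M,N)$ carries a natural $A$-module structure, and the functor $\Hom_A(-,N)$ is left exact.

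First I would use the definition of $\mu(M)$: by Nakayama (applied componentwise to the local factors of the Artinian ring $A$, or directly if one works locally) there exists a surjection
\[
\phi: A^{\oplus \mu(M)} \twoheadrightarrow M.
\]
Applying the left-exact contravariant functor $\Hom_A(-,N)$ to this surjection yields an injection
\[
0 \longrightarrow \Hom_A(M,N) \xrightarrow{\,\phi^*\,} \Hom_A(A^{\oplus \mu(M)}, N).
\]
The target module is naturally identified with $N^{\oplus \mu(M)}$, which has length exactly $\mu(M)\cdot \len(N)$ by additivity of length on short exact sequences.

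Next I would invoke the general fact that if $L \hookrightarrow K$ is an injection of $A$-modules with $K$ of finite length, then $\len(L) \leq \len(K)$; this follows from additivity of $\len$ on the short exact sequence $0 \to L \to K \to K/L \to 0$. Applying this to $\phi^*$ gives
\[
\len(\Hom_A(M,N)) \;\leq\; \len(N^{\oplus \mu(M)}) \;=\; \mu(M)\cdot \len(N),
\]
which is the desired bound. If $M$ is not finitely generated then $\mu(M) = \infty$ and the inequality is trivially valid; otherwise the argument above applies verbatim.

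There is no real obstacle here — the only subtlety to verify is that $\Hom_A(M,N)$ is indeed an $A$-module (which uses commutativity of $A$) and that length is well-defined for it, both of which follow immediately once we exhibit the injection into $N^{\oplus \mu(M)}$.
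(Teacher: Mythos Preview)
Your argument is correct and essentially identical to the paper's: both take a surjection $A^{\oplus \mu(M)} \twoheadrightarrow M$, apply the left-exact contravariant functor $\Hom_A(-,N)$ to obtain an injection into $N^{\oplus \mu(M)}$, and conclude by monotonicity of length under submodules. Your additional remarks on the $A$-module structure of $\Hom$ and the non-finitely-generated case are harmless elaborations.
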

We include the proof from \cite{C93}, as analysing when equality holds is crucial for the proof of Theorem \ref{sup}.
\begin{proof} \label{prooflemmalength}
    Since the functor $\Hom_A(-,N)$ is left exact, a surjection $A^{\mu(M)} \twoheadrightarrow M$ yields an injection $\Hom_A(M,N) \hookrightarrow \Hom_A(A^{\mu(M)},N) \simeq A^{\oplus \mu(M)}$. The length is monotonic under taking submodules, which yields the statement.
\end{proof}

Observe that if we consider $V$ as a module over a commutative algebra $A \subset \lendi (V)$, then $\mu(V)$ is exactly the minimal number $r$ such that $A$ is $r$-spanning.

%\begin{Definition}
%    We say that an algebra $A$ is decomposable, if it can be written as a direct sum $A_1 \oplus A_2$ for some two nonzero subalgebras $A_1,A_2$. Otherwise, we say that $A$ is indecomposable.
%\end{Definition}
\begin{Lemma}\label{lemmapostac}
    Let $A \subset \lendi(V)$ be an indecomposable, commutative subalgebra. Then it is equivalent, up to a change of basis, to an algebra of the form $Id \oplus \mathfrak{m}$, where $\mathfrak{m}$ consists of strictly upper triangular matrices.
\end{Lemma}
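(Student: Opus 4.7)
My reading of the hypothesis is that $V$ admits no nontrivial $A$-invariant direct sum decomposition — equivalently, that the commutative Artinian algebra $A$ contains no nontrivial idempotents. Under this reading the plan is to combine simultaneous triangularization with an eigenvalue argument forced by indecomposability. The first step is routine: because $A$ is commutative and $\kk$ is algebraically closed, I can choose a basis of $V$ in which every element of $A$ is simultaneously upper triangular, by the classical simultaneous triangularization theorem for commuting endomorphisms. Since the lemma allows an arbitrary change of basis, I pass to such a basis.

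Next I will show that every $a\in A$ has only a single eigenvalue. Suppose for contradiction that some $a\in A$ has two distinct eigenvalues $\lambda_1\neq\lambda_2$. Then $V$ decomposes nontrivially into generalized eigenspaces,
\[ V = \bigoplus_{\mu}\lker\big((a-\mu\cdot Id)^{\dime(V)}\big), \]
and since $A$ is commutative, every $b\in A$ commutes with the polynomials $(a-\mu\cdot Id)^{\dime(V)}$ and hence preserves each summand. This yields a nontrivial $A$-invariant decomposition of $V$, contradicting the indecomposability assumption.

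Finally, assigning to each $a\in A$ its unique eigenvalue defines a map $\lambda\colon A\to\kk$. In the triangularized basis, $\lambda(a)$ is simply the common value of the diagonal entries of $a$; since the diagonal entries of $a+b$ and $ab$ for upper triangular $a,b$ are $\lambda(a)+\lambda(b)$ and $\lambda(a)\lambda(b)$ respectively, $\lambda$ is a unital $\kk$-algebra homomorphism. Setting $\mathfrak{m}:=\lker(\lambda)$, its elements are upper triangular with zero diagonal, hence strictly upper triangular. Because $\lambda(Id)=1$ we obtain the vector space decomposition $A = \kk\cdot Id \oplus \mathfrak{m}$, which is the desired form. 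The only genuinely substantive step is the eigenvalue argument in the second paragraph; it works because commutativity makes generalized eigenspaces $A$-invariant, so the main conceptual obstacle is really just pinning down the convention for \emph{indecomposable} at the outset — after which the argument is essentially a short bookkeeping of diagonal entries.
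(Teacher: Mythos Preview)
Your approach mirrors the paper's: triangularize simultaneously, argue that each element has a single eigenvalue, and read off the $\langle Id\rangle\oplus\mathfrak{m}$ decomposition. The problem is your opening sentence. The two readings you give are \emph{not} equivalent: ``$A$ has no nontrivial idempotents'' is strictly weaker than ``$V$ has no nontrivial $A$-invariant direct-sum decomposition'' (consider $A=\kk\cdot Id$ acting on $V=\kk^2$). The paper uses the weaker reading---$A$ indecomposable as an algebra---as the later decomposition $A=A_1\oplus\cdots\oplus A_k$ into local summands makes clear; there the lemma is applied to each $A_i$ acting on $V_i=\imag(A_i)$, and $V_i$ need not be indecomposable as an $A_i$-module. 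Under that correct reading your second paragraph does not close the contradiction: an $A$-invariant splitting of $V$ does not by itself produce a nontrivial idempotent in $A$.

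The repair is short and is exactly the extra work the paper carries out: the projections onto the generalized eigenspaces of $a$ are \emph{polynomials in $a$} (via the Chinese Remainder Theorem for $\kk[a]$, or the explicit product construction the paper writes down), hence lie in $A$ and furnish the forbidden idempotents. Once you insert that line your argument is complete and coincides with the paper's. Your third paragraph, packaging the conclusion via the character $\lambda\colon A\to\kk$, is a tidy way to finish and is equivalent to the paper's direct reading of diagonal entries.
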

%In the proof of \cref{lemmapostac} we use the following two well known theorems on Lie algebras. Recall that on $\lendi(V)$ we can consider the standard bracket
%\[ [-,-]: \lendi(V) \times \lendi(V) \rightarrow \lendi(V) \]
%defined by $[X,Y] = XY - YX$.
%\begin{Proposition}[{\cite[Engel's theorem]{Kirillov}}] \label{engels}
%    Let $A \subset \lendi(V)$ be a Lie algebra, such that each element of $A$ is nilpotent.. Then there exists a choice of basis of $V$, such that all elements of $A$ are strictly upper triangular.
%\end{Proposition}
%\begin{Proposition}[{\cite[Lie's theorem, Theorem 5.30]{Kirillov}}] \label{lie}
%    Let $A \subset \lendi(V)$ be a solvable nilpotent Lie algebra, that is, such that the \textit{derived series}
%    \[ A \supset [A,A] \supset [[A,A],[A,A]] \supset \dots  \]
%    vanishes after finitely many terms. Then there exists a choice of basis of $V$, such that all elements of $A$ are upper triangular.
%\end{Proposition}
%Note that a commutative subalgebra of $\lendi(V)$ is automatically a solvable Lie algebra, as the Lie bracket vanishes.
\begin{proof}
    By Lie's theorem we can write $A$ in the strictly upper triangular form. It is enough to show that each element $B$ of $A$ is of the form $\lambda \cdot Id + B'$ for some $\lambda \in \kk$, $B'\in \lendi(V)$, where $B'$ is strictly upper triangular, or equivalently that all the eigenvalues of $B$ are the same.\\
    Assume that this is not the case. Consider eigenvalues $\lambda_1,\dots,\lambda_k$ of $B$ and denote by $K_i = \lker((B-\lambda_i)^d)$ for $i=1,\dots,k$. Then $V = K_1 \oplus \ldots \oplus K_k$. We show that the projections of $V$ to $K_i$ lie in $A$ for every $i=1,\dots,k$, contradicting indecomposability. Note that the subspaces $K_i$ are invariant under $A$, as
    \[ \underset{a\in A}{\forall}\ \underset{k\in K_i}{\forall} (B-\lambda_iId)^d \cdot (a\cdot k) = a\cdot (B-\lambda_iId)^d \cdot k = 0.\]
    Thus each element of $A$, in particular $B$, has a block structure with blocks corresponding to its restrictions to $K_i$ for $i=1,\dots,k$. Consider an element
    \[ B_{1,2} : =\left( \frac{-1}{(\lambda_2-\lambda_1)^d}((B-\lambda_1Id)^d - (\lambda_2-\lambda_1)^dId) \right)^d \in A. \]
     It has block form with blocks corresponding to its restrictions to $K_i$ for $i=1,\dots,k$, its restriction to $K_1$ is identity, and it acts on $K_2$ by $0$. Analogously we can obtain elements $B_{1,i}$ for $i=2,\dots,k$, which act by identity on $K_1$ and by $0$ on $K_i$. Then the product
     \[ B_1 := B_{1,2} \cdot \ldots \cdot B_{1,k} \]
     is exactly the projection to $K_1$. Analogously we can obtain projections to $K_2,\dots,K_k$, contradicting the irreducibility of $A$.
\end{proof}
The following lemma is important for understanding the case of $dim(A) = r \cdot (d-r) + 1$ in Theorem \ref{sup}.
\begin{Lemma} \label{bazza}
        Let $A = Id \oplus \mathfrak{m}$ be an indecomposable, $r$-spanning, commutative subalgebra of $\lendi(V)$, such that $\dim(\mathfrak{m} \cdot V) = d-r$. Then there exists a basis $v_1,\dots,v_d$ of $V$ such that
    \begin{enumerate}
        \item The elements $v_1,\dots,v_{d-r}$ form a basis of $\mathfrak{m}\cdot V$,
        \item The elements $v_{d-r+1},\dots,v_d$ generate $V$ as an $A$-module,
        \item All elements of $\mathfrak{m}$ written with respect to the basis $v_1,\dots,v_r$ are strictly upper triangular matrices.
    \end{enumerate}
    \end{Lemma}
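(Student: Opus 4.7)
The plan is to construct the basis in two stages. First I would pick lifts of a $\kk$-basis of $V/\mathfrak{m}V$ to obtain the last $r$ vectors $v_{d-r+1},\dots,v_d$, and then choose the first $d-r$ vectors inside $\mathfrak{m}V$ so as to simultaneously strictly upper triangularize the action of $\mathfrak{m}$ on $\mathfrak{m}V$. Concatenating these yields a basis of $V$ in which all three conditions hold.

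By Lemma \ref{lemmapostac} we may assume $A = \langle Id \rangle \oplus \mathfrak{m}$ with $\mathfrak{m}$ consisting of strictly upper triangular (hence nilpotent) matrices. Since $A$ is commutative, the set of its nilpotent elements is an ideal, so $\mathfrak{m}$ is the unique maximal ideal and $A$ is local. The hypothesis $\dime(\mathfrak{m} V) = d-r$ gives $\dime_{\kk}(V/\mathfrak{m}V) = r$, and Nakayama's lemma ensures that any $\kk$-lift $v_{d-r+1},\dots,v_d$ of a basis of $V/\mathfrak{m}V$ generates $V$ as an $A$-module, yielding condition $(2)$. For the first $d-r$ vectors I would apply Engel's theorem to the restricted family $\mathfrak{m}|_{\mathfrak{m}V}$: the subspace $\mathfrak{m}V$ is $A$-invariant, and its elements act on $\mathfrak{m}V$ by commuting nilpotent endomorphisms, so they can be simultaneously strictly upper triangularized in some basis $v_1,\dots,v_{d-r}$ of $\mathfrak{m}V$, giving condition $(1)$.

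It remains to verify condition $(3)$, namely that every $m \in \mathfrak{m}$ is strictly upper triangular in the concatenated basis $v_1,\dots,v_d$. This is indeed a basis of $V$: the last $r$ vectors are linearly independent modulo $\mathfrak{m}V$ by construction, while the first $d-r$ are a basis of $\mathfrak{m}V$. For $i \leq d-r$, the choice of basis of $\mathfrak{m}V$ gives $m(v_i) \in \langle v_1,\dots,v_{i-1}\rangle$ directly. For $i > d-r$ we have $m(v_i) \in \mathfrak{m}V = \langle v_1,\dots,v_{d-r}\rangle \subset \langle v_1,\dots,v_{i-1}\rangle$. The only nontrivial input is Engel's theorem applied to $\mathfrak{m}|_{\mathfrak{m}V}$; beyond this the argument is purely linear algebra, so I do not expect a real obstacle.
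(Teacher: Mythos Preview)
Your proof is correct and in fact cleaner than the paper's. Both arguments hinge on the same two observations: lifts of a $\kk$-basis of $V/\mathfrak{m}V$ generate $V$ over $A$ by Nakayama, and the commuting nilpotents $\mathfrak{m}|_{\mathfrak{m}V}$ can be simultaneously strictly upper triangularized (you invoke Engel, the paper invokes Lie; either applies). The difference is in how the two pieces are assembled. The paper starts from a global basis $w_1,\dots,w_d$ in which $\mathfrak{m}$ is already strictly upper triangular (via Lemma~\ref{lemmapostac}), then performs an echelon-form manipulation to arrange that $d-r$ of the $w_i$ span $\mathfrak{m}V$, reorders those to the front, and finally re-triangularizes on $\mathfrak{m}V$. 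You instead build the basis directly as the concatenation of an arbitrary lift of $V/\mathfrak{m}V$ with an Engel basis of $\mathfrak{m}V$, and then check strict upper triangularity in one line using $m(v_i)\in\mathfrak{m}V=\langle v_1,\dots,v_{d-r}\rangle$ for $i>d-r$. Your route avoids the echelon bookkeeping entirely and makes transparent why no compatibility condition between the two blocks is needed; the paper's route, while correct, does more work than necessary.
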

    \begin{proof}
        By Lemma \ref{lemmapostac} for an indecomposable subalgebra $A$ of $\lendi (V)$ there exists some basis $w_1,\dots,w_d$ with respect to which $A$ is of the form $Id \oplus \mathfrak{m}$, where $\mathfrak{m}$ is strictly upper triangular. We can modify the basis of $V$ while preserving this form, by adding multiples of vectors with smaller indices to those with larger, so that $d-r$ of those vectors span $\mathfrak{m}\cdot V$. This can be seen by the following procedure. First pick an echelon form basis $u_1,\dots, u_{d-r}$ of $\mathfrak{m}\cdot V$ with respect to $w_d,\dots,w_1$, that is so that if
        \begin{align*}
            u_i = \sum\limits_{j=1}^d a_{i,j} w_j\ \text{ and }\ c_i = \max\{ j\in [d] \mid a_{i,j} \neq 0\}
        \end{align*}
        then the sequence $c_i$ is strictly decreasing. Then add to a vector $w_{c_1}$ multiples of $w_j$ for $j<c_1$ to change $w_{c_1}$ into a nonzero multiple of $u_1$. Do the same for $w_{c_2},\dots,w_{c_{d-r}}$, until getting a basis $w_1',\dots,w_d'$, with respect to which $\mathfrak{m}$ consists of strictly upper triangular matrices and $w_{c_i}' = u_{i}$ for $i=1,\dots,d-r$. Then move the vectors $w_{c_{d-r}}',\dots,w_{c_{1}}'$ to the first $d-r$ positions, whilst keeping the others in the same order. Since $\mathfrak{m} \cdot V$ is an invariant subspace of $V$, we can consider $\mathfrak{m}$ restricted to it. Then, by Lie's theorem, there exists a choice of basis $u_1',\dots,u_{d-r}'$ of $\mathfrak{m} \cdot V$ with respect to which all elements of the Lie algebra $\mathfrak{m}$ are strictly upper triangular. Then letting $v_i := u_i'$ for $i=1,\dots,d-r$ and setting the rest of $v_i$ to be $\{w_j\}_{j\in [d]\backslash \{c_1,\dots,c_{d-r}\}}$, with the preserved order of indices, we get a desired basis.
    \end{proof}

Lastly we prove a technical result, which allows us to restrict our attention to the local case.
\begin{Lemma}\label{sumaprostaalgebr}
    Let $A = A_1 \oplus A_2$ be a subalgebra of $\lendi(V)$. For $i=1,2$ let $r_i$ be the smallest number such that $A_i$ is $r_i$-spanning for $V_i=\imag(A_i)$. Then for the smallest possible value of such $r$, such that $A$ is $r$-spanning for $V=\imag(A)$ is equal to $r := \max\{r_1,r_2\}$.
\end{Lemma}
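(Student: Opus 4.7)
The plan is to prove the equality $r = \max\{r_1, r_2\}$ by two inequalities, exploiting the orthogonal idempotents $e_1, e_2 \in A$ coming from the direct sum decomposition $A = A_1 \oplus A_2$. These satisfy $A_i = e_i A$, $e_1 + e_2 = 1_A$ and $e_1 e_2 = 0$. Restricting attention to $V = \imag(A)$ (as the statement allows), the decomposition of idempotents gives $V = V_1 \oplus V_2$ with $V_i = e_i V = \imag(A_i)$, and $A_i$ acts as zero on $V_{3-i}$.

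First I would establish the lower bound $\max\{r_1, r_2\} \leq r$. Suppose $U \subset V$ of dimension $r$ satisfies $A \cdot U = V$. Applying $e_i$ to both sides yields $A_i \cdot U = V_i$, and since $A_i = A_i e_i$, this equals $A_i \cdot (e_i U)$. Thus $e_i U \subset V_i$ is a subspace of dimension at most $\dim U = r$ that spans $V_i$ under $A_i$, forcing $r_i \leq r$.

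Second, for the upper bound $r \leq \max\{r_1, r_2\}$, take witnesses $U_i \subset V_i$ of dimension $r_i$ with $A_i \cdot U_i = V_i$. Assuming WLOG $r_1 \geq r_2$, pick bases $u_1,\dots,u_{r_1}$ of $U_1$ and $w_1,\dots,w_{r_2}$ of $U_2$, and set
\[ U := \langle u_1 + w_1,\ \dots,\ u_{r_2} + w_{r_2},\ u_{r_2+1},\ \dots,\ u_{r_1} \rangle \subset V. \]
Since $V_1 \cap V_2 = 0$, the listed vectors are linearly independent, so $\dim U = r_1$. Because $A_1$ annihilates $V_2$, we have $A_1 \cdot (u_i + w_i) = A_1 u_i$, giving $A_1 \cdot U = A_1 U_1 = V_1$; symmetrically $A_2 \cdot U = A_2 U_2 = V_2$. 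Hence $A \cdot U \supseteq V_1 + V_2 = V$, so $r \leq r_1$.

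I do not anticipate a serious obstacle here: the only subtle point is recognising the abuse of notation in the statement ($V$ really means $\imag(A)$, which the idempotents split compatibly), and ensuring the spanning vectors $u_i + w_i$ remain linearly independent, which is immediate from $V_1 \cap V_2 = 0$. The construction of $U$ by ``diagonally pairing'' bases of $U_1$ and $U_2$ up to index $r_2$ is the only mildly nontrivial idea.
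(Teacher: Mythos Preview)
Your proof is correct and follows essentially the same approach as the paper: the paper's $(1,0)$ is precisely your idempotent $e_1$, and its diagonal vectors $(v_j^1, v_j^2)$ (with $v_j^i = 0$ for $j > r_i$) are exactly your $u_j + w_j$. You are simply more explicit about the idempotents and the linear independence check, but the argument is the same.
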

\begin{proof}
    Clearly $r \geq r_1,r_2$, as for $v_1,\dots,v_r \in V$ such that $A$ is $r$-spanning, we have that 
    \[ \imag(A_1) = A_1 \cdot \langle v_1 , \dots, v_r \rangle = A_1 \cdot \langle (1,0) \cdot v_1 , \dots, (1,0) \cdot v_r \rangle . \]
    Furthermore, if for $i=1,2$ \[ v_1^i,\dots,v_{r_1}^i \]
    witness the $r_i$-spanning condition for $A_i$, then
    \begin{gather*}
        A \cdot \langle (v_1^1,v_1^2) , \dots, (v_{\max\{r_1,r_2\}}^1, v_{\max\{r_1,r_2\}}^2) \rangle  = A_1 \cdot \langle v_1^1 , \dots, v_{r_1}^1 \rangle \oplus A_2 \cdot \langle v_1^2 , \dots, v_{r_1}^2 \rangle = \imag(A_1) \oplus \imag(A_2) = V
    \end{gather*}
    where for $i=1,2$, $j > r_i$ we set $v_j^i = 0$. Thus $r \leq \max\{r_1,r_2\}$.
\end{proof}

\begin{proof}[Proof of Theorem \ref{sup}]
    Assume first that $A$ is an indecomposable subalgebra of $\lendi(V)$, by Lemma \ref{lemmapostac} it has to be equivalent, up to conjugation, to a subalgebra of the form $\langle Id \rangle + \mathfrak{m}$ with $\mathfrak{m}$ a subspace consisting of strictly upper triangular matrices. Since $A$ is finite dimensional, it is Artinian, note that as $\mathfrak{m}$ is nilpotent, it is the unique maximal ideal of $A$. Apply Lemma \ref{cow} to $M=V$ and $N = \mathfrak{m}V$ treated as an $A$-module. Then, $\mathfrak{m}$ injects into $\Hom_A(V,\mathfrak{m}V)\subset \lendi (V)$
    \begin{align*}
        \len(\mathfrak{m}) \leq \len(\Hom_A(V,\mathfrak{m}V)) \leq \mu(V) \cdot \len(\mathfrak{m}\cdot V).
    \end{align*}
    By the remark above $\mu(V)\leq r$. Since $A$ is a local algebra with a maximal ideal $\mathfrak{m}$, by Lemma \ref{nakajama} we have $\len(\mathfrak{m}V) = \len(V)-\mu(V)$. Thus for $r<\frac{d+1}{2}$
    \begin{align*}
        \mu(M) \cdot \len(\mathfrak{m}V) = \mu(V) \cdot (\len(V)-\mu(V)) = \mu(V) \cdot (d-\mu(V)) \leq r \cdot (d-r).
    \end{align*}
    since $\mu(V) \leq $ Thus $\dime(A) = \dime(\mathfrak{m}) +1 \leq r \cdot (d-r) +1$.\\
    In the general case, since $A$ is finite dimensional over $\kk$, it can be written as a direct sum $A_1 \oplus \ldots \oplus A_k$ of indecomposable, local algebras. Denote $r_i = \mu(A_i)$ and $d_i = \dime(\imag(A_i))$. Then by Lemma \ref{sumaprostaalgebr}, $r = \underset{i \in [k]}{\max}\ r_i$ and $d = \sum\limits_{i=1}^{k}d_i$. Applying the above consideration to each of the components of $A$ yields:
    \begin{align} \label{wymAwynik}
        \dime(A) \leq \sum\limits_{i=1}^{k} (r_i \cdot (d_i-r_i) +1).
    \end{align}
    Take $i,j \in [k]$ such that $r_i \geq r_j$. Then
    \begin{gather} \label{pomocniczanier}
        \big(r_i\cdot (d_i-r_i)+1\big)+\big(r_j\cdot (d_j-r_j) +1\big) 
        \leq (r_i \cdot (d_i+d_j-r_i-r_j)) + r_i \cdot r_j + 1 
        = \\ =(r_i \cdot (d_i+d_j - \max\{r_i,r_j\}) + 1
    \end{gather}
    and equality can occur only if $r_i \cdot r_j = 1$, thus $r_i=r_j=1$. Note that $r_i=1$ can hold only if $\dim(A_i) = d_i$. Here we need not assume that $A_i$ is indecomposable.
    %Note that unless $r_1=d_1$ and $r_2=d_2$ (which for irreducible algebras can hold only in the case $d_i=1$)
    %\begin{gather*} \label{pomocniczanier}
        %r_1\cdot (d_1-r_1)+r_2\cdot (d_2-r_2) <   r_1\cdot (d_1-r_1)+r_1\cdot (d_2-r_2)+r_2\cdot (d_2-r_2)+r_2 \cdot (d_1-r_1)\\ = (r_1 + r_2)\cdot (d-(r_1 + r_2)).
    %\end{gather*}
    Applying inductively the inequality above to Inequality \ref{wymAwynik}, we see that
    \begin{align} \label{rozkladalnanier}
        \dime(A) \leq ( \underset{i \in [k]}{\max}\ r_i) \cdot \left(\sum\limits_{i=1}^{k} d_i - \underset{i \in [k]}{\max}\ r_i \right) +1 = r\cdot (d-r) +1
    \end{align}
    proving $(a)$.\\
    Now consider the case where $\dime(A) = r\cdot (d-r) +1$. Then all the inequalities above have to be equalities. Therefore we can assume $\mu(M)=r$. Equality in Inequality \ref{rozkladalnanier} holds only if the two sides of Inequality \ref{pomocniczanier} are always equal. But this is only possible if for each $i \in [k]$, $r_i=1$, thus $\dim(A) \leq d < r \cdot (d-r) +1$, contradiction. Thus we can assume that $A$ is indecomposable.\\
    Pick a basis $v_1,\dots,v_d$ of $V$, such that $A$ satisfies the conditions of Lemma \ref{bazza}. We can then fix a surjection $A^{r} \rightarrow V$ that maps the $i$-th basis element of the free module $A^r$ to $v_{d-r+i}$ for $i=1,\dots,r$. In order for the injection
    \[\Hom_A(V,\mathfrak{m}V) \hookrightarrow \Hom_A(A^{\mu(M)},\mathfrak{m}V)\]
    described as in Lemma \ref{cow} to be an isomorphism, specifying any $r$ elements of $\mathfrak{m}V$ yields a unique homomorphism of $A$-modules from $V$ to $\mathfrak{m}V$, such that the fixed $r$ generators of $V$ map to the specified $r$ elements of $\mathfrak{m}V$. Furthermore since the injection $\mathfrak{m} \hookrightarrow \Hom_A(V,\mathfrak{m}V)$ must be an isomorphism, the lifting is given by an element of $\mathfrak{m} \subset \lendi (V)$. Consider a subspace $W \subset V$ generated by $v_{d-r+1},\dots,v_{d}$. Define the canonical basis
    \[ e_{i,j} \text{ of } \Hom_{\kk}(W,\mathfrak{m}V) \text{ for }i=1,\dots,d-r,\ j=d-r+1,\dots,d \]
    by $e_{i,j}v_k = \delta_{j,k}v_i$, where $\delta_{j,k}$ is the Kronecker's delta. Their extensions to $A$-module homomorphisms define a basis of $Hom_A(A^{r},\mathfrak{m}V)$. Thus we can choose a basis $\tilde{e}_{i,j}$ of $\mathfrak{m}$ lifting $e_{i,j}$ under the aforementioned composition. We show that the space $\mathfrak{m}$ is square-zero by verifying it on the basis $\tilde{e}_{i,j}$ of $\mathfrak{m}$ and the basis $v_1,\dots,v_{d-r}$ of $\mathfrak{m}\cdot V$. Assume, for the sake of contradiction, that there exist $k,l$ such that
    \[ \tilde{e}_{k,l}v_j \neq 0 \ \text{ for some } j\leq d-r.\]
    Pick an $s\geq d-r$ such that $l\neq s$, this is possible since $r>1$. By the commutativity condition of endomorphisms in $A$, we have
   \begin{align*}
       \tilde{e}_{k,l} \cdot \tilde{e}_{j,s} - \tilde{e}_{j,s}\cdot \tilde{e}_{k,l} = 0.
   \end{align*}
   Let us evaluate the operator above on $v_s$. We have $\tilde{e}_{j,s}v_s = v_j$ and $\tilde{e}_{k,l}v_s =0$, since $l\neq s$. Thus
   \begin{align*}
       (\tilde{e}_{k,l} \cdot \tilde{e}_{j,s} - \tilde{e}_{j,s}\cdot \tilde{e}_{k,l})(v_s) = \tilde{e}_{k,l} \cdot  v_j \neq 0
   \end{align*}
   contradicting our assumption. Thus indeed $\mathfrak{m}$ is square-zero, and the endomorphisms $\mathfrak{m} \subset End(V)$ written in the basis $v_1,\dots,v_d$ lie in the space of matrices $W_{d-r,r}$, which was to be proven in part $(b)$ of Theorem \ref{sup}.
\end{proof}

\subsection{\texorpdfstring{The $r$-spanning subspaces for $d\leq 2$}{The spanning subspaces for}}

Combined, the theorems above give a description of maximal dimensional $r$-spanning subspaces for all $r>1$ and $d>3$. We now describe the classification for $d \leq 2$. We will not use the case $d=3$ later, thus we omit it for the sake of conciseness. In the case $r=1$ the scheme $\qu$ is exactly $\Hilb_d(\mathbb{A}^n)$. Since for any algebra $A$, $\lendi_A(A) \simeq A$, all the algebras have a $d$-dimensional space of endomorphisms and the classification of $1$-spanning commutative subspaces reduces to the classification of finite algebras of length $d$. For the classification in the cases $d\leq 6$ see \cite{B08}, it is well known that there are infinitely many isomorphism classes over $\mathbb{C}$ in the case $d\geq 7$ \cite{S56}. Nevetherless, the work of \cite{HJ25} describes, using the techniques of $\mathbb{A}^1$-homotopy theory, the cohomology of the colimit $\Hilb_d(\mathbb{A}^{\infty}) := \underset{n\rightarrow \infty}{\colim}\Hilb_d(\mathbb{A}^n)$, and it is equal to the cohomology of the Grassmanian $\Grass(d-1,\infty)$ of $(d-1)$-dimensional subspaces in $k^{\oplus \infty}$.\\

We start our description with the trivial case $d=1$. Then $\lendi (V)$ is given by multiplication by scalars and $\dime(\lendi (V))=1$. Thus there is a unique $1$-dimensional ($r$-spanning) commutative subspace.\\

Consider now the case $d=2$. Assume that $\kk$ is an algebraically closed field. We claim the following holds:

\begin{Theorem} \label{wymdwamacierze}
    Let $V$ be of dimension $d=2$ and $A$ a commutative, $r$-spanning subspace of $\lendi (V)$ for $r=1$ or $r=2$. Then the maximal possible dimension of $A$ is $2$ and if $\dime(A)=2$, then $A$ is equivalent, up to a change of basis, to one of the following spaces of matrices:
    \begin{enumerate}
        \item Diagonal $2\times 2$ matrices,
        \item $\langle Id\rangle \oplus W_{1,1}$, that is, upper triangular $2\times 2$ matrices with the same element on the diagonal.
    \end{enumerate}
\end{Theorem}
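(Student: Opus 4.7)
The plan is to first establish the dimension bound $\dime(A)\leq 2$ via a centralizer argument specific to $2\times 2$ matrices, and then classify the $2$-dimensional case using the Jordan normal form of a non-scalar generator.

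For the upper bound, I would argue as follows. If every element of $A$ is a scalar multiple of $Id$, then $\dime(A)\leq 1$. Otherwise, pick a non-scalar element $B\in A$. Since $\dime(V)=2$, the minimal polynomial of $B$ has degree $2$ and coincides with its characteristic polynomial, so $B$ is non-derogatory and its centralizer in $\lendi(V)\simeq M_2(\kk)$ equals $\kk[B]=\langle Id,B\rangle$, which has dimension $2$. Since $A$ is commutative, every element of $A$ lies in this centralizer, whence $A\subseteq \langle Id, B\rangle$ and $\dime(A)\leq 2$. In particular, if $\dime(A)=2$, then $A=\langle Id, B\rangle$, so $Id\in A$ automatically; this spares us from having to treat $2$-dimensional commutative subspaces not passing through $Id$ separately.

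Next, assuming $\dime(A)=2$ and writing $A=\langle Id, B\rangle$ for a non-scalar $B$, I would split into two subcases using that $\kk=\bar{\kk}$ and hence $B$ admits a Jordan normal form. If $B$ has two distinct eigenvalues, then $B$ is diagonalizable, so in an eigenbasis $A=\langle Id, B\rangle$ consists precisely of diagonal matrices. If $B$ has a single repeated eigenvalue $\lambda$, then $B-\lambda\cdot Id$ is nonzero (because $B$ is non-scalar) and nilpotent, hence conjugate to the elementary matrix $e_{12}$; in the resulting basis $A=\langle Id, e_{12}\rangle = \langle Id\rangle \oplus W_{1,1}$. These are exactly the two normal forms claimed in the theorem, and to show the bound $2$ is attained, both examples are in particular $2$-dimensional and commutative.

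To finish, I would verify that both candidate subspaces satisfy the $r$-spanning condition for $r=1$ (and hence also for $r=2$): the diagonal matrices applied to $v=e_1+e_2$ yield $v$ and $e_1$, spanning $V$; and $\langle Id, e_{12}\rangle$ applied to $v=e_2$ yields $e_2$ and $e_1$. I do not anticipate any real obstacle in this proof---the whole argument is essentially linear algebra over an algebraically closed field. The main conceptual content is concentrated in the centralizer calculation, which both bounds the dimension and forces $Id\in A$; once this is in hand, the classification reduces to the two Jordan types of a non-scalar $2\times 2$ matrix.
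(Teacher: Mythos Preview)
Your proof is correct and complete. The paper's own proof is much terser and takes a slightly different route: it observes that if $A$ contains a non-scalar matrix then $A$ (implicitly, the subalgebra it generates) is automatically $1$-spanning, invokes the earlier lemma that a $1$-spanning commutative subalgebra of $\lendi(V)$ has dimension exactly $d$, and then appeals to the classification of length-$2$ $\kk$-algebras over an algebraically closed field (namely $\kk\times\kk$ and $\kk[\epsilon]/(\epsilon^2)$), which correspond to the two normal forms.

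Your centralizer argument replaces the $1$-spanning lemma by the classical fact that a non-derogatory matrix has centralizer $\kk[B]$, and your Jordan form dichotomy replaces the classification of length-$2$ algebras. These are essentially the same mathematical content repackaged, but your version is more self-contained (it does not lean on the module-theoretic lemmas developed earlier in the section), while the paper's version keeps the argument inside the general $r$-spanning framework used for all $d$. Both reach the same conclusion with the same amount of work.
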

\begin{proof}
    If $A$ contains any matrix which is not a multiple of identity, then it is necessarily $1$-spanning. Thus it is a space of endomorphism of a $\kk$-algebra, which over an algebraically closed field $\kk$ correspond to one of the spaces above.
    %so it is enough to classify those. Pick as one basis element of the algebra $1$, and denote the other by $x$. Clearly $x^2=ax+b$ for some $a,b \in \kk$, and over an algebraically closed field we can factor the polynomial $x^2-ax-b$  as a product of linear forms. If the roots of those forms are different, then the algebra is isomorphic to $\kk[x]/(x-\lambda_1)\cdot(x-\lambda_2) \simeq \kk \oplus \kk$ by the Chinese Remainder theorem, and if they are the same, then it isomorphic to $\kk[x]/(x^2)$, corresponding to the second case.
\end{proof}

\section{Computation of the cohomology of the maximal dimensional loci} \label{sec3coh}

In this section we introduce loci of $\qu$ which contain the loci $\Zmax$ and calculate their cohomology. First consider the subspaces $W_{d-k,k}$ with only nonzero entries in the first $d-k$ rows and last $k$ columns we defined in Definition \ref{Wkldef}. Any subspace $A$ matrices satisfying $A^2=0$ for which
\[ \dime\left(\bigcap_{a\in A}\lker(a)\right) \geq d-k \text{ and } \dime\left(\sum_{a\in A}\imag(a)\right) \leq d-k \]
is, up to conjugation, equivalent to a subspace of $W_{d-k,k}$, that is by setting the first $d-k$ basis vectors to be arbitrary ones such that their span contains $\sum_{a\in A}\imag(a)$. Conversely, for any $k,l$ the space $W_{d-k,k}$ satisfies
\[ \dime\left(\bigcap_{a\in W_{k,l}}\lker(a)\right) \geq d-k \text{ and } \dime\left(\sum_{a\in W_{k,l}}\imag(a)\right) \leq d-k. \]

\begin{Definition}
    We define a subset $\tilde{S}_k$ of the traceless locus $\Yhat$ as the set of points $y$ with $pr_1(y) = (X_1,\dots,X_n)$ for which the following conditions are satisfied:
    \begin{align*}
        \dime\left(\bigcap\limits_{i=1}^n\lker(X_i)\right) \geq d-k,\ \ \dime\left(\sum\limits_{i=1}^n \imag(X_i)\right) \leq d-k, \ \ X_i \cdot X_j = 0 \text{ for }i,j=1,\dots,n .
    \end{align*}
    Since this is an invariant closed subset, its image $S_k := \pi(\tilde{S}_{k})$ is a closed subset of $\Qnrd \subset \qu$. 
\end{Definition}

We now define locally closed subsets $R^{r,n}$ of $\Qnrd$ such that the product $\mathbb{A}^n \times R^{r,n}$ contains $\Zmax$. Their geometry is the main object of study of this section. We compute their cohomology, giving us a better intuition for what could be the cohomology of the Quot scheme of an infinite affine space. The cohomology of $\mathbb{A}^n \times X$ is isomorphic to the cohomology of $X$ for any algebraic variety $X$. Thus to compute the cohomology of the loci $\mathbb{A}^n \times R^{r,n}$ in $\qu$, whose complement has codimension diverging to infinity with $n$, we can equivalently consider the loci $R^{r,n}$ in $\Qnrd$.

\begin{Definition}
    \begin{enumerate}
        \item Let $1<r<\frac{d+1}{2}$. Define $R^{r,n}$ to be $S_r$.
        \item Let $d=2k$ and $r\geq k$. Define $R^{r,n}$ to be the subset of $S_k$ defined by the condition
        \[ \dime\left(\sum\limits_{i=1}^n \imag(X_i)\right) = k. \]
        \item Let $d=2k+1$ and $r\geq k+1$. Define $R_1^{r,n}$ to be the subset of $S_{k}$ defined by the condition
        \[ \dime\left(\sum\limits_{i=1}^n \imag(X_i)\right) = k+1 \]
        and $R_2^{r,n}$ to be the subset of $S_{k+1}$ defined by the condition
        \[ \dime\left(\sum\limits_{i=1}^n \imag(X_i)\right) = k. \]
        Let $R^{r,n} := R_1^{r,n} \cup R_2^{r,n}$.
    \end{enumerate}
\end{Definition}

Note that we impose additional conditions in the cases $r \geq \frac{d}{2}$, which amount to replacing a lower bound for the sum of images of $X_i$ with an equality. Because of this the sets are not closed as in the $1<r<\frac{d+1}{2}$ case, but merely locally closed. The reason  for this condition is that for $r \geq \frac{d}{2}$ the sets defined without this condition (that is the closure of $R^{r,n}$) would be quite badly behaved and singular. Assume for simplicity that $d=2k$. It can be checked that the the singularity type is the same at every locally closed component $\dime(\sum\limits_{i=1}^n \imag(X_i)) = l$ for $0\leq l <k$ of the closure $\overline{R^{r,n}}$ and corresponds to a linear section of some determinental variety. However in the $1<r<\frac{d+1}{2}$ the condition on $\sum\limits_{i=1}^n \imag(X_i)$ being of maximal possible dimension is automatically satisfied, as $\dime(\imag(S_0^{\oplus r})) \leq r$ and $\imag(S^{\oplus r}_{\geq 1}) \subset \sum\limits_{i=1}^n \imag(X_i)$.\\

We now determine the cohomology of the $R^{r,n}$. We begin with the simplest case $1<r<\frac{d+1}{2}$.

\begin{Theorem}
    Let $1<r<\frac{d+1}{2}$. Then $R^{r,n} \simeq \Grass(n\cdot r,d-r)$.
\end{Theorem}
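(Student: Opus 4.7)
The plan is to construct mutually inverse morphisms between $R^{r,n}$ and $\Grass(nr, d-r)$ using the universal families on both sides, after first pinning down the structure of the $\kk$-points of $R^{r,n}$.

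First I would extract the pointwise structure. For any lift $(X_1, \dots, X_n, v_1, \dots, v_r) \in \tilde{S}_r$ of a point of $R^{r,n}$, write $U = \langle v_1, \dots, v_r \rangle$ and $I = \sum_i \imag(X_i)$. Since $X_i X_j = 0$, the subalgebra of $\lendi(V)$ generated by $X_1, \dots, X_n$ equals $\langle \mathrm{Id} \rangle + \langle X_1, \dots, X_n \rangle$, so the spanning condition simplifies to $U + \sum_i X_i(U) = V$. A short dimension count using the defining inequality $\dim I \leq d-r$ then forces $\dim U = r$, $\dim I = d - r$, and $V = U \oplus I$, with each $X_i$ vanishing on $I$. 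The module structure on $V$ is therefore entirely recorded by the linear surjection
\[
\phi_1 \colon S_1^{\oplus r} \twoheadrightarrow I, \qquad x_i e_j \longmapsto X_i v_j,
\]
and two lifts differing by the $GL(V)$-action yield equivalent such surjections onto a $(d-r)$-dimensional quotient of the $nr$-dimensional space $S_1^{\oplus r}$.

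To build $\alpha\colon R^{r,n} \to \Grass(nr, d-r)$ globally, let $\widetilde{\mathcal{M}}$ denote the rank-$d$ locally free $\mathcal{O}_{R^{r,n}}$-module obtained as the pushforward of the universal quotient sheaf on $\aff^n \times R^{r,n}$, with its commuting $\mathcal{O}$-linear endomorphisms $X_1, \dots, X_n$. The universal surjection restricts to a morphism of locally free sheaves
\[
\mathcal{O}_{R^{r,n}}^{\oplus nr} \longrightarrow \widetilde{\mathcal{M}}, \qquad e_{ij} \longmapsto X_i(v_j),
\]
whose image $\widetilde{\mathcal{I}}$ has fiberwise constant rank $d-r$ by the structural analysis; constancy of rank then makes $\widetilde{\mathcal{I}}$ a locally free subsheaf of $\widetilde{\mathcal{M}}$, and the surjection $\mathcal{O}_{R^{r,n}}^{\oplus nr} \twoheadrightarrow \widetilde{\mathcal{I}}$ is classified by a morphism $\alpha$.

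In the reverse direction, starting from the universal surjection $\mathcal{O}_{\Grass}^{\oplus nr} \twoheadrightarrow \mathcal{I}_{univ}$ on $\Grass := \Grass(nr, d-r)$, set $\widetilde{\mathcal{M}} := \mathcal{O}_{\Grass}^{\oplus r} \oplus \mathcal{I}_{univ}$ and endow it with the $\mathcal{O}_{\Grass}[x_1, \dots, x_n]$-module structure in which $x_i$ annihilates $\mathcal{I}_{univ}$ and sends $e_j \in \mathcal{O}_{\Grass}^{\oplus r}$ to the image of $e_{ij}$ in $\mathcal{I}_{univ}$. Viewed as a sheaf on $\aff^n \times \Grass$, $\widetilde{\mathcal M}$ is finitely supported over $\Grass$, and the tautological surjection $\mathcal{O}_{\aff^n \times \Grass}^{\oplus r} \twoheadrightarrow \widetilde{\mathcal{M}}$ satisfies the defining conditions of $R^{r,n}$ by construction, producing the desired morphism $\beta\colon \Grass \to R^{r,n}$. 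Verifying that $\alpha \circ \beta$ and $\beta \circ \alpha$ are the identities is then a direct comparison of universal objects: the former recovers $\mathcal{I}_{univ}$ tautologically, while for the latter the decomposition $V = U \oplus I$ from the structural analysis upgrades to a canonical isomorphism $\widetilde{\mathcal{M}} \cong \mathcal{O}_{R^{r,n}}^{\oplus r} \oplus \widetilde{\mathcal{I}}$ compatible with the surjections. I expect the main technical obstacle to be the local freeness of $\widetilde{\mathcal I}$ used to define $\alpha$, which hinges precisely on the rank constancy produced by the pointwise structural analysis and so requires that analysis to be carried out uniformly in families.
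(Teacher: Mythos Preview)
Your proposal is correct and rests on the same core idea as the paper: identify a point of $R^{r,n}$ with the $(d-r)$-dimensional quotient of the degree-one piece $(S^{\oplus r})_1 \cong \kk^{nr}$, using the structural fact that the square-zero and dimension constraints force $V = U \oplus I$ with $\dim I = d-r$. The paper's argument is much terser: it defines the forward map $R^{r,n} \to \Grass(nr,d-r)$ just as you do, then invokes reducedness of $R^{r,n}$ (as a closed subscheme of a reduced scheme) to reduce the isomorphism claim to a check on closed points, observing there that the kernel of $S^{\oplus r}\twoheadrightarrow M$ is determined by its restriction to degree one since degree zero injects and degree $\geq 2$ dies. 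By contrast, you construct the inverse $\beta$ explicitly from the universal quotient on the Grassmannian and verify $\alpha\circ\beta$ and $\beta\circ\alpha$ are identities as morphisms of schemes. This buys you a fully functorial argument that sidesteps the somewhat informal ``check on closed points'' shortcut, at the cost of the extra bookkeeping you flag (local freeness of $\widetilde{\mathcal I}$ via constant rank, and globalizing the splitting $V=U\oplus I$).
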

\begin{proof}
    For a point $S^{\oplus r} \twoheadrightarrow M$ of $R^{r,n}$ consider the vector space map given by $(S^{\oplus r})_1 \longrightarrow \imag((S^{\oplus r})_1)$ induced by the quotient. As stated above, since $\dime(\imag((S^{\oplus r})_0)) \leq r$ we automatically have $\dime(\imag(S^{\oplus r}_{\geq 1})) = d-r$. Since by the square-zero assumption $\imag((S^{\oplus r})_2) = 0$, this implies that the construction gives a point of the quotient Grassmanian $\Grass(n\cdot r,d-r)$. We claim it is an isomorphism of schemes. Since $R^{r,n}$ is a closed subscheme of a reduced scheme it is reduced, so it is enough to verify the claim on closed points. There it follows since $\imag((S^{\oplus r})_{\geq 2}) = 0$ and the quotient is injective on $S_0^{\oplus r}$, so the assignment uniquely determines the kernel. 
\end{proof}
\begin{Corollary}
    Let $1<r<\frac{d+1}{2}$. Then $R^{r,\infty} \simeq \Grass(d-r,\infty)$.
\end{Corollary}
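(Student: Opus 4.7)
By construction, $R^{r,\infty}$ is the filtered colimit $\underset{n\rightarrow\infty}{\colim}\ R^{r,n}$ with respect to the restrictions of the embeddings $\iota_n: \qu \hookrightarrow \quplu$. The plan is to identify these restrictions, under the isomorphisms $R^{r,n} \simeq \Grass(nr, d-r)$ from the previous theorem, with the standard embeddings of Grassmannians, and then pass to the colimit.

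First I would verify that $\iota_n$ actually restricts to a map $R^{r,n} \hookrightarrow R^{r,n+1}$. This is immediate because extending a tuple $(X_1,\dots,X_n)$ by $X_{n+1}=0$ preserves the square-zero condition, the dimension of $\sum_i \imag(X_i)$, and the dimension of $\bigcap_i \lker(X_i)$, which are the only conditions cutting out $R^{r,n}$ inside the traceless locus.

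Next, I would unpack the isomorphism from the preceding theorem to see that it intertwines $\iota_n$ with the standard embedding of Grassmannians. A point $[S_n^{\oplus r} \twoheadrightarrow M] \in R^{r,n}$ corresponds to the surjection $(S_n^{\oplus r})_1 \twoheadrightarrow \imag((S_n^{\oplus r})_1)$, where $(S_n^{\oplus r})_1$ has basis $\{x_i e_j\}_{1 \leq i \leq n,\ 1\leq j \leq r}$. After applying $\iota_n$, the analogous surjection is $(S_{n+1}^{\oplus r})_1 \twoheadrightarrow \imag((S_{n+1}^{\oplus r})_1)$, where the source has picked up the $r$-dimensional subspace spanned by $\{x_{n+1} e_j\}_{j=1}^r$; all these vectors are sent to $0$ since $x_{n+1}$ acts trivially on $M$, and the target is unchanged. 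This is precisely the standard embedding $\Grass(nr, d-r) \hookrightarrow \Grass((n+1)r, d-r)$ that extends a quotient map by zero on a chosen complementary $r$-plane.

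Finally, since $r \geq 1$ is fixed, the subsequence $\{nr\}_{n \geq 1}$ is cofinal in $\mathbb{N}$, and so
\[ R^{r,\infty} \simeq \underset{n\rightarrow\infty}{\colim}\ \Grass(nr, d-r) \simeq \underset{m\rightarrow\infty}{\colim}\ \Grass(m, d-r) \simeq \Grass(d-r, \infty), \]
recovering the standard presentation of the infinite Grassmannian. No step is really an obstacle: the substantive work is the compatibility check in the second paragraph, which is a direct unpacking of how $\iota_n$ acts on degree-one pieces of $S^{\oplus r}$.
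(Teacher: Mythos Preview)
Your proof is correct and follows essentially the same approach as the paper: both arguments check that under the isomorphism $R^{r,n}\simeq\Grass(nr,d-r)$ the embedding $\iota_n$ becomes the standard ``extend by zero on the new $r$ coordinates'' map, and then identify the colimit with $\Grass(d-r,\infty)$. Your version is slightly more explicit (you spell out why $\iota_n$ restricts to $R^{r,n}$ and invoke cofinality of $\{nr\}$), but the substance is the same.
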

\begin{proof}
    Under the embedding $\iota_n$ a quotient of $(\kk[x_1,\dots,x_n]^{\oplus r})_1$ is mapped to a quotient of $(\kk[x_1,\dots,x_{n+1}]^{\oplus r})_1$ which is the same under the restriction to $(\kk[x_1,\dots,x_n]^{\oplus r})_1$ and zero on $X_{n+1}(\kk[x_1,\dots,x_n]^{\oplus r})_0$. This agrees exactly with the composition of the sequence of embeddings $r$ embeddings in the direct limit defining $\Grass(d-r,\infty)$, so the colimits agree.
\end{proof}

As we see, the loci $R^{r,n}_d$ are particularly well-behaved in the cases $1< r < \frac{d+1}{2}$.
%The result above combined with the smoothness and closeness of $R^{r,n}$ motivates us to make the following conjecture.
%\begin{conjecture}
    %Let $1< r < \frac{d+1}{2}$. Then the cohomology of $\qin$ is isomorphic to $H^*(\Grass(d-r,\infty))$.
%\end{conjecture}
Let us now move our attention to the case $r \geq \frac{d}{2}$.

\begin{Theorem} \label{proofkohRrn}
    Let $d=2k$ and $r\geq k$. Then $H^*(R^{r,n}) \simeq H^*(\Grass(r,k)) \otimes H^*(\Grass(n \cdot k,k))$.
\end{Theorem}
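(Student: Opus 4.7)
The plan is to exhibit $R^{r,n}$ as the total space of a two-step tower
\[ R^{r,n} \xrightarrow{\Phi} \tilde{R} \xrightarrow{\rho} \Grass(r,k), \]
where $\rho$ is a Zariski-locally trivial Grassmannian bundle with fiber $\Grass(nk,k)$ and $\Phi$ is a Zariski-locally trivial vector bundle of rank $k(r-k)$. The cohomology statement will then follow by combining the Leray--Hirsch decomposition for Grassmannian bundles with the homotopy invariance of cohomology along affine bundles.

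\textbf{Construction of the maps.} Given $[S^{\oplus r}\twoheadrightarrow M]\in R^{r,n}$, the square-zero condition forces $X_i\cdot W=0$ for $W:=\sum_i\imag(X_i)$, so each $X_i$ factors as $\bar X_i:M/W\to W$. In particular the $x_i$ act as zero on $M/W$, making the composite $\kk^r=(S^{\oplus r})_0\to M\to M/W$ a surjection onto a $k$-dimensional space; this defines $p_1:R^{r,n}\to\Grass(r,k)$. Let $\mathcal{Q}\to\Grass(r,k)$ denote the tautological rank $k$ quotient bundle and set $\tilde{R}:=\Grass_k(\mathcal{Q}^{\oplus n})$, the relative Grassmannian of rank $k$ quotients, which is a $\Grass(nk,k)$-bundle over $\Grass(r,k)$. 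The surjection $(M/W)^{\oplus n}\twoheadrightarrow W$ assembled from the $\bar X_i$'s lifts $p_1$ to a morphism $\Phi:R^{r,n}\to\tilde{R}$.

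\textbf{Fibers of $\Phi$.} Over a pair $(q:\kk^r\twoheadrightarrow Q,\ \pi:Q^{\oplus n}\twoheadrightarrow W)\in\tilde{R}$, the fiber parametrizes submodules $K\subseteq S^{\oplus r}$ giving this data. Setting $V:=Q\oplus W$ with $S$-action $X_i(q',w'):=(0,\pi_i(q'))$, every such $K$ is the kernel of a surjection $\phi:S^{\oplus r}\twoheadrightarrow V$ of the form $\phi(e_j)=(q(e_j),w_j)$ for some $(w_j)\in W^r$. A direct computation shows that two lifts $(w_j),(w_j')$ give the same kernel if and only if $w_j'-w_j=C(q(e_j))$ for some $C\in\Hom(Q,W)$; this corresponds to the action of the unipotent subgroup of $GL(V)$ consisting of endomorphisms that commute with all $X_i$'s and act trivially on $V/W$. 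Consequently the fiber identifies with $\Hom(\kk^r,W)/\Hom(Q,W)\simeq\Hom(\ker q,W)$, an affine space of dimension $k(r-k)$. Globally, $\Phi$ realizes $R^{r,n}$ as the total space of the rank $k(r-k)$ vector bundle $\Hom(\mathcal{K},\mathcal{W})$ on $\tilde{R}$, where $\mathcal{K}:=\ker(\kk^r\twoheadrightarrow\mathcal{Q})$ is the tautological rank $r-k$ subbundle on $\Grass(r,k)$ (pulled back) and $\mathcal{W}$ is the tautological rank $k$ quotient bundle on $\tilde{R}$; the zero lift $w_j\equiv 0$ furnishes a canonical global section.

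\textbf{Cohomology and main obstacle.} Since $\Phi$ is an affine bundle, $\Phi^*:H^*(\tilde{R})\xrightarrow{\sim}H^*(R^{r,n})$. Since $\rho$ is a Zariski-locally trivial Grassmannian bundle whose fiber has torsion-free cohomology concentrated in even degrees, the Leray--Hirsch theorem yields $H^*(\tilde{R})\simeq H^*(\Grass(r,k))\otimes H^*(\Grass(nk,k))$; composing gives the claim. The main technical obstacle is to promote the set-theoretic fiber identification to a scheme-theoretic vector bundle structure compatible with base change. The cleanest route is to compare the functors of points of $R^{r,n}$ and of the total space of $\Hom(\mathcal{K},\mathcal{W})\to\tilde{R}$ directly through the lift formula, using the canonical zero section to rule out any torsor twist. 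Reducedness of $R^{r,n}$ as a locally closed subscheme of $\Qnrd$ then reduces the verification of bijectivity to the level of $\kk$-points, as in the $1<r<\tfrac{d+1}{2}$ case treated earlier in this section.
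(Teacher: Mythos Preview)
Your argument is correct and structurally very close to the paper's, but the final cohomology extraction differs. Both proofs realize $R^{r,n}$ as an $\mathbb{A}^{k(r-k)}$-bundle over a $\Grass(nk,k)$-bundle over $\Grass(r,k)$: your $\tilde R$ coincides with the paper's closed locus $R^{r,n}_\circ$ (defined by $\dim\imag((S^{\oplus r})_0)=k$), and your canonical zero section $w_j\equiv 0$ is precisely the paper's inclusion $R^{r,n}_\circ\hookrightarrow R^{r,n}$, with the paper's retraction $\beta$ playing the role of your $\Phi$. Where you part ways is after this point: you finish by invoking Leray--Hirsch for the Grassmannian bundle $\rho$, using that Chern classes of the tautological quotient on $\tilde R$ restrict to generators of the fiber cohomology. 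The paper instead notes that $R^{r,n}_\circ$ is projective, equips both $R^{r,n}_\circ$ and the product $\Grass(r,k)\times\Grass(nk,k)$ with suitably weighted $\Gm$-actions, matches their fixed points bijectively, and then verifies by examining $\Gm$-characters on invariant curves that the Bia\l ynicki--Birula cells have matching dimensions. Your route is shorter and more conceptual; the paper's route is more elementary (no Leray--Hirsch hypothesis to verify), produces cell counts explicitly, and stays within the Bia\l ynicki--Birula framework used elsewhere in the paper.
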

\begin{proof}
    First we show that we can restrict to the locus $R^{r,n}_{\circ}$, defined by the subspace of surjections satisfying the condition
    \[ \dime\left( \imag\big((S^{\oplus r})_0\big) \right) = k . \]
    It is a closed subset of $R^{r,n}$, as on $R^{r,n}$ the condition above is equivalent to $\dime\left( \imag\big((S^{\oplus r})_0\big) \right) \leq k$. There is a morphism $\beta: R^{r,n} \twoheadrightarrow R^{r,n}_{\circ}$, defined on $\kk$-points by sending a surjection
    \[ \phi: S^{\oplus r} \twoheadrightarrow M \ \text{ to } \ \tilde{\phi}: S^{\oplus r} \twoheadrightarrow \tilde{M}. \]
    Here, as a vector space,
    \[ \tilde{M} \simeq  M/\imag(S^{\oplus r}_{\geq 1}) \oplus \imag(S^{\oplus r}_{\geq 1}). \]
    $S_1$ acts on the second summand by $0$, while the image of $M/\imag(S^{\oplus r}_{\geq 1})$ under the action of $S_1$ is contained in the second summand $\imag(S^{\oplus r}_{\geq 1})$ and is induced by the action of $S$ on $M$,
    \[ S_1 \times M \rightarrow \imag(S^{\oplus r}_{\geq 1}) \]
    using the fact that $S_2$ acts trivially on $M$. The morphism to the linear subspace $M/\imag(S^{\oplus r}_{\geq 1})$ is the quotient of $\phi$ restricted to $(S^{\oplus r})_0$, and we extend it to an $S$-module homomorphism via the action of $S$ on $\tilde{M}$ described above.\\
    
    Consider coverings of $R^{r,n}$ and $R^{r,n}_{\circ}$ by open subsets of $\phi: S^{\oplus r} \twoheadrightarrow M$ satisfying
    \[ \imag\left(\langle e_{s_1},\dots,e_{s_k},X_{i_{1}}e_{s_{j_1}}, \dots, X_{i_{k}}e_{s_{j_k}}\rangle\right) = M \]
    for fixed subsets
    \[ \{s_1,\dots,s_k\} \subset [r],\ \{(i_1,j_1),\dots,(i_k,j_k)\} \subset [n]\times [k]. \]
    Choose an element
    \[ [\phi: S^{\oplus r} \twoheadrightarrow M] \in R^{r,n}, \]
    lying in the open subset above.
    For $i=1,\dots,n$, $j=1,\dots,k$ such that $(i,j)\notin \{(i_1,j_1),\dots,(i_k,j_k)\}$, write the image of $X_ie_{s_j}$ under $\phi$ as a linear combination of elements
    \[ \{X_{i_{1}}e_{s_{j_1}}, \dots, X_{i_{k}}e_{s_{j_k}}\}. \]
    For $i\in [r]$ such that $i\notin \{ s_1,\dots,s_k \}$ write the image of $e_i$ under $\phi$ as a linear combination of elements
    \[e_{s_1},\dots,e_{s_k},\ X_{i_{1}}e_{s_{j_1}}, \dots, X_{i_{k}}e_{s_{j_k}}. \]
    Conversely, any choice of coefficients in the linear combinations above uniquely defines an element of $R^{r,n}$. Since a $\kk$-point of the open subset of $R^{r,n}$ is uniquely specified by assigning values in $\kk$ to a fixed number of free variables, this open subset is isomorphic to an affine space of dimension equal to the number of those variables. We specify the expressions of $n\cdot k -k$ elements of the form $X_ie_j$ in the $k$-element basis $X_{i_1}e_{s_{j_1}}, \dots, X_{i_k}e_{s_k}$. Additionally, we specify the coefficients of the $k$ basis elements $e_{s_1},\dots,e_{s_k}$ and the $k$ basis elements $X_{i_1}e_{s_{j_1}}, \dots, X_{i_k}e_{s_k}$ in the expressions for the images of the $r-k$ elements $e_i$, for $i \notin \{s_1,\dots,s_k\}$. Thus the open set we are considering is isomorphic to the affine space
    \[ \aff^{(nk - k)\cdot k} \times \aff^{(r-k)\cdot k} \times \aff^{(r-k)\cdot k} .\]
    An element $\phi$ lies in $R^{r,n}_{\circ}$ if and only if the coefficients of $\{X_{i_{1}}e_{s_{j_1}}, \dots, X_{i_{k}}e_{s_{j_k}}\}$ in the expression in basis of the images of $e_i$ for $i\notin \{ s_1,\dots,s_k \}$ are zero. In coordinates $\beta$ is given by a linear projection to the subspace $\aff^{n\cdot k - k} \times \aff^{(r-k)\cdot k} \times \{0\}$. Thus $\beta : R^{r,n} \rightarrow R^{r,n}_{\circ}$ is a fiber bundle with an $(r-k)\cdot k$-dimensional affine space as the fiber. In particular
    \[ H^*(R^{r,n}) \simeq H^*(R^{r,n}_{\circ}). \]
    The advantage of considering $R^{r,n}_{\circ}$ is that it is a projective variety, allowing us to employ the classical version of the Białynicki-Birula decomposition \cite[Theorem 4.2]{BCM02}.\\

    %Alternative shortened proof of the claim that $\beta$ is an affine bundle:\\
    %By the description above the elements $\phi \in R^{r,n}$ in the fiber of $\beta$ over $\overline{\phi} \in R^{r,n}_{\circ}$ have the same image in $M/\imag(S^{\oplus r}_{\geq 1})$. Thus they all differ by an element of $\Hom_{\kk}((S^{\oplus r})_0,\imag(S^{\oplus r}_{\geq 1}))$. Considering $e_{s_1},\dots,e_{s_k}$, whose images in $\overline{\phi}$ are linearly independent, we can choose a representative for which the homomorphism above is zero when restricted to $\langle e_{s_1},\dots,e_{s_k} \rangle$. Under this assumption it is determined uniquely and $\beta$ is given by projecting such a homomorphism to zero. Thus fixing $k$ elements of $S^{\oplus r}_{\geq 1}$ generating $\imag(S^{\oplus r}_{\geq 1})$, we see that $\beta$ is a fiber bundle with an $(r-k)\cdot k$-dimensional affine space as the fiber.\\

    Consider the morphism sending a point $\phi: S^{\oplus r} \twoheadrightarrow M$ of $R^{r,n}_{\circ}$ to the kernel of its restriction to $(S^{\oplus r})_0$, that is $ker\big((S^{\oplus r})_0 \rightarrow M\big)$. Since
    \[ \dime\big(\imag(S^{\oplus r}_{\geq 1})\big) = \dime\left(\sum\limits_{i=1}^n \imag(X_i)\right) = k, \]
    we have $\dime( M/\imag(S^{\oplus r}_{\geq 1})) = d-k = k$. Thus the construction above gives us a morphism $\alpha$ from $R^{r,n}_{\circ}$ to the quotient Grassmanian $\Grass(r,k)$.\\
    
    We would like to understand what is the fiber of this morphism over a point $\overline{\phi} \in \Grass(r,k)$. Let $K_{\phi} := \lker(\overline{\phi})$. Consider a $k$-dimensional subspace $U \subset (S^{\oplus r})_0$ not intersecting $K_{\phi}$. Since $(S^{\oplus r})_{\geq 2} \subset \lker(\phi)$, the kernel of $\phi$ is uniquely determined by the kernel of $S^{\oplus r}_{\leq 1} \rightarrow M$. Since we are considering points in the fiber over a fixed $\overline{\phi}$, it is enough to determine the kernel of $\phi$ restricted to $S^{\oplus r}_1$. By the assumption
    \[ \dime(\sum\limits_{i=1}^n \imag(X_i)) = k \text{ and } \dime \left( \sum\limits_{i=1}^n \imag(X_i) \right) = k \]
    determining such a kernel is equivalent to determining a quotient given by restricting the morphism $\phi: (S^{\oplus r})_1 \twoheadrightarrow \imag(S_1^{\oplus r})$ to the subspace $S_1 \cdot U$. This corresponds to a point in the quotient Grassmanian $\Grass(n \cdot k, k).$\\
    
    We see that $\alpha^{-1}(\overline{\phi}) \simeq \Grass(n\cdot k, k)$ for any $\overline{\phi} \in \Grass(r,k)$. Thus $R^{r,n}_{\circ}$ is smooth, as it is a fiber bundle with smooth fiber $\Grass(n \cdot k, k)$ and smooth base $\Grass(r,k)$.\\
    %It follows from the description above, that it trivializes on open subsets of $\overline{\phi} \in \Grass(r,k)$, where the inclusion of a fixed $k$-dimensional subspace $U$ into $S_0^{\oplus r}$ yields an isomorphism after composing with $\overline{\phi}$. We can consider the covering of $\Grass(r,k)$ given by the open subsets $U_{i_1,\dots,i_k}$ determined as above by the coordinate subspaces $K_{i_1,\dots,i_k}$ for $\{i_1,\dots,i_k\} \subset [r]$.\\

    Consider the action of $\Gm$ on $R^{r,n}_{\circ}$ given by the coordinate action with weights
    \[ \lambda_1, \ldots , \lambda_r \text{ on } (S^{\oplus r})_0 \text{ and } \gamma_1,\dots,\gamma_n \text{ on } \aff^n, \]
    satisfying
    \[ 0<\lambda_1< \ldots < \lambda_r \text{ and } \gamma_1<\dots<\gamma_n . \]
    Assume furthermore that
    \[ \gamma_{i}-\gamma_{i-1}> \lambda_r  \ \text{ for } i=2,\dots,n . \]
    The fixed points of this action are (non-uniquely) indexed by a tuple
    \[  (s_1,\dots,s_k)\ \]
    and a subset 
    \[ \{ (i_1,j_1), \dots, (i_k,j_k) \} \subset [n]\times [k]. \]
    The choice of such a tuple and subset corresponds to a surjection $\phi: S^{\oplus r} \twoheadrightarrow M$, where the images of
    \[ \{ e_{s_1},\dots,e_{s_k}, X_{i_1}e_{s_{j_1}},\dots,X_{i_k}e_{s_{j_k}} \} \]
    span $M$, and all the other elements of the form $e_j$, $X_ie_j$ map to zero under $\phi$. This gives a one-to-one correspondence under the additional assumption that $s_1<\ldots<s_k$, but we omit this condition in order to simplify the notation in the proof, always choose a suitable representative.\\

    We can consider a similar $\Gm$-action on the product of Grassmanians
    \[ \Grass(r,k) \times \Grass(n \cdot k,k). \]
    We view the Grassmanianns as parametrizing the quotients of an $r$-dimensional vector space with basis $e_1,\dots,e_r$, and an $n\cdot k$-dimensional vector space with basis $e_{i,j}$, where $i=1,\dots,n$, $j=1,\dots,k$, respectively. The $\Gm$-action on the product is defined by assigning weights $\lambda_1, \dots, \lambda_r$ to the basis elements $e_1,\dots,e_r$, and weights $\gamma_i + \lambda_j$ on the basis elements $e_{i,j}$ of the $n\cdot k$-dimensional ambient space of $\Grass(n\cdot k,k)$. Note that the action of $\Gm$ on $\Grass(n\cdot k,k)$ factor doesn't depend on the weights $\lambda_i$ for $i>k$. The fixed points of this action can be indexed by the same data (consisting of a tuple and a subset) as the fixed points of the  $\Gm$-action on $R^{r,n}_{\circ}$. They correspond to a pair of coordinate surjections, where the subspaces
    \[ \langle e_{s_1},\dots,e_{s_k} \rangle \text{ in } \Grass(r,k) \text{ and } \langle e_{i_1,j_1},\dots,e_{i_k,j_k} \rangle \text{ in } \Grass(n\cdot k,k) \]
    are mapped isomorphically in the respective quotients, while all the other coordinate vectors are sent to zero.\\

    Since $R^{r,n}_{\circ}$ is smooth and projective, we can apply to it the classical Białynicki-Birula decomposition. We claim that the numbers of affine cells of a fixed dimension in the decompositions of $R^{r,n}_{\circ}$ and $\Grass(r,k) \times \Grass(n \cdot k,k)$ under the respective $\Gm$-actions coincide. It suffices to show, by Lemma \ref{styczne}, that at each fixed point, the dimension of the subbundle of the tangent bundle on which $\Gm$ acts with a positive character is the same for $R^{r,n}_{\circ}$ as it is for $\Grass(r,k) \times \Grass(n \cdot k,k)$. To that end, we first find a set of invariant curves on $R^{r,n}_{\circ}$ and on $\Grass(r,k) \times \Grass(n \cdot k,k)$ such that, at each fixed point, their tangent vectors form a basis of the tangent space in the respective varieties. We pair the curves above, tangent at corresponding fixed points of $R^{r,n}_{\circ}$ and $\Grass(r,k) \times \Grass(n \cdot k,k)$, in such a way that $\Gm$ acts with a positive character on a curve in $R^{r,n}_{\circ}$ if and only if it does so on the corresponding curve in $\Grass(r,k) \times \Grass(n \cdot k,k)$. This proves our claim, as the span of the tangent vectors to invariant curves with positive $\Gm$-character coincides with the subbundle of the tangent bundle on which $\Gm$ acts with positive characters.\\
        
    We consider two types of $\Gm$-invariant curves connecting fixed points on $R^{r,n}_{\circ}$ and $\Grass(r,k)\times \Grass(n\cdot k,k)$.
    \begin{enumerate}
        \item[(a)] A unique invariant degree one curve connecting
    \[ (s_1,\dots,s_k), \{(i_1,j_1),\dots,(i_k,j_k)\}\ \text{ and }\ (s_1,\dots,s_k'),\{(i_1,j_1),\dots,(i_k,j_k)\}.\]
     On $\Grass(r,k) \times \Grass(n\cdot k,k)$ it is constant on $\Grass(n\cdot k,k)$. The torus $\Gm$ acts on the curve with the character $\lambda_{s_k} - \lambda_{s_k'}$ on both varieties.
        \item[(b)] Consider the unique invariant degree one curve connecting the fixed points
    \[ (s_1,\dots,s_k),  \{(i_1,j_1),\dots,(i_k,j_k)\} \ \text{ with } \ (s_1,\dots,s_k),\{ (i_1,j_1),\dots,(i_k',j_k') \} , \]
    where we picked a representative of the fixed point such that $s_1<\ldots<s_k$.
    %It is defined on $R^{r,n}_{\circ}$ by projecting a degree $1$ curve from $\yuniv$ and $\Gm$ acts on it with the character  and the unique $\Gm$-invariant curve between the two points of $\Grass(n\cdot k,k)$,
    On $R^{r,n}_{\circ}$ the torus acts with the character $\lambda_{s_{i_k}}-\lambda_{s_{i_k}'}+\gamma_{j_k}-\gamma_{j_k'}$. On $\Grass(r,k) \times \Grass(n\cdot k,k)$ it is constant on $\Grass(r,k)$ and $\Gm$ acts with the character $\lambda_{i_k}-\lambda_{i_k'}+\gamma_{j_k}-\gamma_{j_k'}$.
    \end{enumerate}

    We show that the pairs of curves on $R^{r,n}_{\circ}$ and $\Grass(r,k) \times \Grass(n\cdot k,k)$ carry the character of the same sign. For curves of type $(a)$ this is clear, as the characters are the same. For curves of type $(b)$, since $|\gamma_i-\gamma_j|>\lambda_r$ for any $i\neq j$, they have the same sign if $j_k \neq j_k'$. Thus we assume that $j_k=j_k'$. Then the $\Gm$-characters are $\lambda_{s_{i_k}}-\lambda_{s_{i_k'}}$ on $R^{r,n}_{\circ}$ and $\lambda_{i_k}-\lambda_{i'_k}$ on $\Grass(r,k) \times \Grass(n\cdot k,k)$. As we chose a representative with $s_1<\dots<s_k$, one character is positive if and only if the other is.\\

    Thus the affine cell decompositions of $R^{r,n}_{\circ}$ and $\Grass(r,k) \times \Grass(n\cdot k,k)$ are the same, so there is a non-canonical isomorphism of cohomology groups    
    \begin{align*}
        H^*(R^{r,n}_{\circ}) \simeq H^*(\Grass(r,k) \times \Grass(n\cdot k,k)) \simeq H^*(\Grass(r,k)) \otimes H^*(\Grass(n\cdot k,k)),
    \end{align*}
    which was to be proven, since $H^*(R^{r,n}) \simeq H^*(R^{r,n}_{\circ})$.
\end{proof}
\begin{Corollary}
    Let $d=2k$ and $r\geq k$. The cohomology of the colimit $R^{r,\infty} = \underset{n\rightarrow \infty}{\colim}\ R^{r,n}$ is isomorphic to $H^*(\Grass(r,k)) \otimes H^*(\Grass(k,\infty))$.
\end{Corollary}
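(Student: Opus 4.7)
The plan is to deduce the corollary from Theorem \ref{proofkohRrn} by passing to the inverse limit in cohomology, after verifying that the isomorphism there is compatible with the transition maps $\iota_n: R^{r,n} \hookrightarrow R^{r,n+1}$. Since each $\iota_n$ is a closed embedding and $H^i(R^{r,n}) \simeq H^i\bigl(\Grass(r,k) \times \Grass(n\cdot k, k)\bigr)$ stabilizes in each fixed cohomological degree $i$ once $n\cdot k$ is large enough (by the classical description of Grassmannian cohomology), the Milnor $\lim^{1}$ term vanishes, and hence $H^*(R^{r,\infty}) \simeq \varprojlim_n H^*(R^{r,n})$.

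Next I would check compatibility with $\iota_n$ at the level of Białynicki-Birula cells. The embedding $\iota_n$ extends a tuple $(X_1,\dots,X_n)$ by $X_{n+1}=0$ and restricts to $R^{r,n}_\circ \hookrightarrow R^{r,n+1}_\circ$ in a manner compatible with the affine bundle $\beta$ of the proof of Theorem \ref{proofkohRrn}. By extending the weight sequence $\gamma_1<\dots<\gamma_n$ to $\gamma_1<\dots<\gamma_{n+1}$ while preserving the gap condition $\gamma_i - \gamma_{i-1} > \lambda_r$, one can arrange that $\iota_n$ is $\Gm$-equivariant. A fixed point of $R^{r,n}_\circ$ indexed by a tuple $(s_1,\dots,s_k)$ and a subset $\{(i_1,j_1),\dots,(i_k,j_k)\}\subset [n]\times [k]$ is then carried to the fixed point of $R^{r,n+1}_\circ$ indexed by the same data, now viewed inside $[n+1]\times[k]$. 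Under the correspondence established in the proof of Theorem \ref{proofkohRrn}, this matches the natural embedding of fixed points of $\Grass(r,k)\times \Grass(n\cdot k, k)$ into those of $\Grass(r,k) \times \Grass((n+1)\cdot k, k)$ induced by the standard inclusion $\mathbb{A}^{n\cdot k}\hookrightarrow \mathbb{A}^{(n+1)\cdot k}$, and the pairing of invariant curves of types $(a)$ and $(b)$ respects these embeddings as well.

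Passing to the inverse limit in cohomology then yields
\[ H^*(R^{r,\infty}) \simeq H^*(\Grass(r,k)) \otimes \varprojlim_n H^*(\Grass(n\cdot k, k)) \simeq H^*(\Grass(r,k)) \otimes H^*(\Grass(k,\infty)), \]
using that $\underset{n\rightarrow\infty}{\colim}\Grass(n\cdot k, k) \simeq \Grass(k,\infty)$ under the standard Grassmannian embeddings, together with the Milnor argument applied to this colimit. The main obstacle is the naturality check: although the BB cell counts and characters were shown to match for each individual $n$ in the proof of Theorem \ref{proofkohRrn}, the isomorphism constructed there was not a priori natural, so one must pin down the pairing of fixed points and of invariant curves so that it varies coherently in $n$ and intertwines $\iota_n$ with the standard Grassmannian embeddings.
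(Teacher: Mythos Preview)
Your route is more elaborate than the paper's and leaves open precisely the point you yourself flag. The paper does not go back through the Białynicki--Birula cells or invoke a $\varprojlim^{1}$ argument. Instead it uses the \emph{geometric} structure established inside the proof of Theorem~\ref{proofkohRrn}: the morphism $\alpha\colon R^{r,n}_{\circ}\to\Grass(r,k)$ (sending $\phi$ to the kernel of $\phi$ restricted to degree~$0$) together with the identification of each fibre with $\Grass(n\cdot k,k)$. The transition map $\iota_n$ commutes with $\alpha$ since it does not alter the degree-$0$ part, and on each fibre it is literally the standard inclusion $\Grass(n\cdot k,k)\hookrightarrow\Grass((n{+}1)\cdot k,k)$ given by extending a quotient by zero on the new block $x_{n+1}\cdot U$. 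So the directed system $\{R^{r,n}\}$ matches, over $\Grass(r,k)$, the system defining $\Grass(r,k)\times\Grass(k,\infty)$, and one finishes by the fact that cohomology commutes with filtered colimits.

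The obstacle you identify is genuine for your approach: the isomorphism in Theorem~\ref{proofkohRrn} is explicitly declared non-canonical (it arises from an equality of cell counts, not from a map of spaces), and matching fixed points and the signs of characters on invariant curves coherently in $n$ still does not determine the pullback $\iota_n^{*}$ on cohomology classes --- you would additionally need to control how cell closures meet under $\iota_n$. The paper avoids all of this by working with the actual maps $\alpha$ and the actual fibre identifications, which are manifestly natural in $n$. Your strategy could likely be completed with more work, but the fibre-bundle description already present in the proof of Theorem~\ref{proofkohRrn} gives the naturality for free.
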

\begin{proof}
    It is clear that under each of the embeddings $\iota_n$ our construction is identity on $\Grass(r,k)$ and on the fibers of $\alpha$ maps the Grassmanian $\Grass(n \cdot k,k)$ into $\Grass((n+1) \cdot k,k)$ by extending the quotient to be zero on the last $k$ coordinate vectors (corresponding to the restriction $\phi_{\mid X_nU}$). This implies that under pullback the embeddings of $R^{r,n}$ into $R^{r,n+1}$ induce the same maps as in the direct limit of topological spaces defining $\Grass(\infty,k) \times \Grass(r,k) = \underset{n\rightarrow \infty}{\colim} \Grass(n\cdot k,k) \times \Grass(r,k)$. Since cohomology commutes with filtered colimits, we have
    \[ H^*(R^{r,n}) \simeq H^*(\Grass(r,k)) \otimes H^*(\Grass(k,\infty)) . \qedhere \]
\end{proof}

\begin{Theorem}
    Let $d=2k+1$ and $r\geq k+1$. Then
    \begin{align*}
        H^*(R^{r,n}) \simeq H^*(\Grass(r,k)) \otimes H^*(\Grass(n \cdot k,k+1)) \oplus H^*(\Grass(r,k+1)) \otimes H^*(\Grass(n \cdot (k+1),k)).
    \end{align*}
\end{Theorem}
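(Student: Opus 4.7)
My plan is to apply the strategy of Theorem~\ref{proofkohRrn} separately to each of $R_1^{r,n}$ and $R_2^{r,n}$, then assemble the results. First, observe that $R_1^{r,n}$ and $R_2^{r,n}$ are disjoint, distinguished by $\dime(\sum_i \imag(X_i))$ taking values $k+1$ and $k$ respectively; consequently $R_1^{r,n}$ is open in $R^{r,n}$ and $R_2^{r,n}$ is its closed complement.

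For each $j \in \{1, 2\}$, set $(\ell_1, m_1) = (k, k+1)$ and $(\ell_2, m_2) = (k+1, k)$. Imitating the four-step argument of Theorem~\ref{proofkohRrn}, I would construct an affine-fiber-bundle retraction $\beta_j \colon R_j^{r,n} \twoheadrightarrow R_{j,\circ}^{r,n}$ onto the closed sublocus where the image of $(S^{\oplus r})_0$ under $\phi$ has minimal dimension $\ell_j$, so that $H^*(R_j^{r,n}) \simeq H^*(R_{j,\circ}^{r,n})$. Then exhibit $R_{j,\circ}^{r,n}$ as a $\Grass(n\ell_j, m_j)$-fiber bundle over $\Grass(r, \ell_j)$ via the map $\alpha_j \colon [\phi] \mapsto \ker\bigl((S^{\oplus r})_0 \to M/\sum \imag(X_i)\bigr)$; in particular each $R_{j,\circ}^{r,n}$ is smooth and projective. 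Equipping it with the coordinatewise $\Gm$-action of the previous proof, matching fixed points and the invariant curves of types (a) and (b) with those on $\Grass(r, \ell_j) \times \Grass(n\ell_j, m_j)$, and verifying that tangent-character signs coincide, Białynicki-Birula yields
\[ H^*(R_j^{r,n}) \simeq H^*(\Grass(r, \ell_j)) \otimes H^*(\Grass(n\ell_j, m_j)). \]

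To assemble these into the claimed decomposition for $H^*(R^{r,n})$, I would extend the $\Gm$-action to all of $R^{r,n}$; since $\dime(\sum_i \imag(X_i))$ is $\Gm$-invariant, every $\Gm$-orbit (and hence every Białynicki-Birula attracting cell) lies entirely in exactly one of $R_1^{r,n}$ or $R_2^{r,n}$. The cell decomposition of $R^{r,n}$ therefore partitions as the disjoint union of those of the two pieces, and since both cohomology rings are concentrated in even degrees, the ranks add to give the claimed direct sum. The main obstacle is the tangent-character matching of Theorem~\ref{proofkohRrn} adapted to this setting; an additional subtlety is verifying that no Białynicki-Birula cell straddles the boundary between $R_1^{r,n}$ and $R_2^{r,n}$, which reduces to the $\Gm$-invariance of the image-dimension stratification.
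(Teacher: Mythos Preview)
Your computation of $H^*(R_j^{r,n})$ for each $j$ matches the paper's approach, which simply declares it completely analogous to Theorem~\ref{proofkohRrn}. Where you diverge is the assembly step. The paper does not treat $R^{r,n} = R_1^{r,n} \cup R_2^{r,n}$ as an open--closed decomposition; instead it argues that both pieces are \emph{open} in $R^{r,n}$ (hence clopen), so that $R^{r,n}$ is the topological disjoint union and the cohomology splits immediately as a direct sum. The separating open conditions in $\Qnrd$ are $\dime\bigl(\sum_i\imag(X_i)\bigr) \geq k+1$ for $R_1^{r,n}$ and $\dime\bigl(\bigcap_i\lker(X_i)\bigr) \leq k$ for $R_2^{r,n}$.

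Your assembly via a global Bia\l{}ynicki--Birula decomposition of $R^{r,n}$ has a genuine gap: the cell decompositions you actually produced live on the smooth projective loci $R_{j,\circ}^{r,n}$, not on $R^{r,n}$, and you have not verified that $R^{r,n}$ itself is smooth and semiprojective --- the hypotheses under which BB computes cohomology. Even if one pulls back affine pavings along the $\beta_j$ to each $R_j^{r,n}$, combining these into a filterable stratification of $R^{r,n}$ requires controlling the closures in $R^{r,n}$ of cells from the open piece $R_1^{r,n}$, which you do not address. The final inference ``since both cohomology rings are concentrated in even degrees, the ranks add'' does not follow from a bare open--closed decomposition: the relevant long exact sequence does not split without already knowing that $H^*(R^{r,n})$ is concentrated in even degrees. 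The paper's clopen observation sidesteps all of this.
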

\begin{proof}
    First we note that
    \begin{align*}
        H^*(R^{r,n}_1) \simeq H^*(\Grass(r,k)) \otimes H^*(\Grass(n \cdot k,k+1))
    \end{align*}
    and
    \begin{align*}
        H^*(R^{r,n}_2) \simeq H^*(\Grass(r,k+1)) \otimes H^*(\Grass(n \cdot (k+1),k)).
    \end{align*}
    Since the proof is completely analogous to the proof of Theorem \ref{proofkohRrn}, we omit it. Thus it is enough to show that the topology on $R^{r,n}$ is that of a disjoint sum of $R^{r,n}_1$ and $R^{r,n}_1$. There exists an open subset of $\Qnrd$ containing $R^{r,n}_1$ and not intersecting $R^{r,n}_2$ given by the invariant condition
    \[ \dime\left(\sum\limits_{i=1}^n \imag(X_i)\right) \geq k+1, \]
    and there exists an open subset of $\Qnrd$ containing $R^{r,n}_2$ and not intersecting $R^{r,n}_1$ given by the invariant condition
    \[ \dime\left(\bigcap\limits_{i=1}^n\lker(X_i)\right) \leq k. \qedhere \]
\end{proof}
As before, this yields the following corollary when taking the colimit under the sequence of embeddings $\iota_n$.
\begin{Corollary}
    Let $d\geq 2k+1$ and $r\geq k+1$. The cohomology of the colimit $R^{r,\infty}$ is isomorphic to the direct sum $H^*(\Grass(r,k)) \otimes H^*(\Grass(k+1,\infty)) \oplus H^*(\Grass(r,k+1)) \otimes H^*(\Grass(k,\infty))$.
\end{Corollary}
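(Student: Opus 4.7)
The plan is to imitate the proof of the preceding corollary (the $d=2k$ case), applying it separately to each of the two open-closed pieces $R^{r,n}_1$ and $R^{r,n}_2$ and then combining via Künneth and the additivity of cohomology across disjoint unions. By the theorem just proved, each $R^{r,n}_i$ has cohomology isomorphic to a tensor product of two Grassmannians, one fixed in $n$ and the other whose ambient dimension grows linearly in $n$, so the task reduces to identifying the transition maps under $\iota_n$.

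First I would verify that the embeddings $\iota_n\colon R^{r,n} \hookrightarrow R^{r,n+1}$ preserve the disjoint decomposition. Extending a tuple $(X_1,\dots,X_n)$ by $X_{n+1}=0$ leaves $\sum_{i=1}^n \imag(X_i)$ and $\bigcap_{i=1}^n \lker(X_i)$ unchanged, so the invariant conditions $\dime(\sum \imag(X_i)) = k+1$ and $\dime(\sum \imag(X_i)) = k$ distinguishing $R^{r,n}_1$ from $R^{r,n}_2$ are preserved. Hence the colimits $R^{r,\infty}_j := \underset{n\rightarrow \infty}{\colim}\ R^{r,n}_j$ for $j=1,2$ form a disjoint open-closed decomposition of $R^{r,\infty}$.

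Next, on each piece I would trace through the explicit morphisms $\alpha$ and the fiber map constructed in (the odd-case analogue of) Theorem \ref{proofkohRrn}. The morphism $\alpha$ to $\Grass(r,k)$ (respectively $\Grass(r,k+1)$) depends only on the restriction of $\phi$ to $(S^{\oplus r})_0$, which is untouched by $\iota_n$, so this factor is constant under the colimit. The fiber map sends $\phi$ to the quotient $S_1 \cdot U \twoheadrightarrow \sum_i \imag(X_i)$; adjoining $X_{n+1}=0$ extends this quotient by the zero map on the newly added block of coordinates $X_{n+1} \cdot U$. This matches the standard embedding $\Grass(n\cdot k, k+1) \hookrightarrow \Grass((n+1)\cdot k, k+1)$ (respectively $\Grass(n \cdot (k+1),k) \hookrightarrow \Grass((n+1)\cdot (k+1),k)$) whose colimit defines $\Grass(k+1,\infty)$ (respectively $\Grass(k,\infty)$).

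Since singular cohomology commutes with filtered colimits of closed inclusions and with disjoint unions, and the Künneth isomorphism is natural in both factors, combining the previous two steps with the cohomology formulas of the theorem yields
\begin{align*}
H^*(R^{r,\infty}) &\simeq H^*(R^{r,\infty}_1) \oplus H^*(R^{r,\infty}_2) \\
&\simeq \bigl(H^*(\Grass(r,k)) \otimes H^*(\Grass(k+1,\infty))\bigr) \oplus \bigl(H^*(\Grass(r,k+1)) \otimes H^*(\Grass(k,\infty))\bigr).
\end{align*}
The main obstacle is purely a bookkeeping one, namely checking that the transition maps between the fiber Grassmannians really are the standard ones defining $\Grass(k+1,\infty)$ and $\Grass(k,\infty)$; but this is dictated by the explicit description from Theorem \ref{proofkohRrn} and by the fact that $\iota_n$ extends tuples by zero.
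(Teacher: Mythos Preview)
Your proposal is correct and follows essentially the same approach as the paper, which simply records the corollary with the remark ``As before, this yields the following corollary when taking the colimit under the sequence of embeddings $\iota_n$.'' You have spelled out the implicit ``as before'' argument, together with the extra (straightforward) verification that the disjoint decomposition $R^{r,n}=R^{r,n}_1\sqcup R^{r,n}_2$ is preserved by $\iota_n$, which is indeed immediate from the invariance of $\sum_i\imag(X_i)$ and $\bigcap_i\lker(X_i)$ under appending $X_{n+1}=0$.
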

As noted before, the product structure of $\Qnrd$ with $\mathbb{A}^n$ is preserved under each of the embeddings, so the corollaries above also describe the cohomology of the loci in $\qin$.
\begin{Remark}
    Note that since in each of the cases above $\Zmax$ is an open subset of the smooth loci $R^{r,n}$, therefore $\Zmax$ is the subset of the smooth locus of $\qu$. In particular, by the results of Section \ref{sec1} the difference in dimension of $\qu$ and the dimension of its singular locus diverges to infinity as $n\rightarrow \infty$.
\end{Remark}

\section{\texorpdfstring{The cohomology of $\qu$ for $d \leq 2$}{sec4quot2}} \label{sec4quot2}

In this section we compute the cohomology of the Quot scheme $\qu$ for $d \leq 2$. The computation confirms the conjecture of Pandharipande (Question \ref{panda}) in the case $d=2$. Our method of proof involves applying the long exact sequence in cohomology for an abstract blowup (Theorem \ref{blowupseq}) to the resolution of singularieties of $\qudwa$ following from the results of \cite{S24}. Then a careful analysis using the Białynicki-Birula decomposition applied to the smooth scheme $\Hilb_2(\mathrm{Tot}(\mathbb{P}\mathcal{O}_{\mathbb{A}^n}^{\oplus  r}))$ yields the cohomology of the resolution and allows us to compute the result for $\qudwa$.\\

We begin by giving the easy result for the case $d=1$.
\begin{Proposition}
    Let $d=1$. The Quot scheme $\qu$ is isomorphic to $\aff^n \times \mathbb{P}^{r-1}$. In particular $H^*(\qu) \simeq \mathbb{Z}[x]/(x^{r})$.
\end{Proposition}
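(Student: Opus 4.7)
The plan is to identify the scheme $\qu$ for $d=1$ directly from the universal description via $\yuniv$ and the quotient map $\pi$ of Proposition \ref{defpi}. I would fix a $1$-dimensional $\kk$-vector space $V$, so that $\lendi(V) \simeq \kk$ and the $GL(V) = \Gm$-action on $V$ is by scalar multiplication. Then a $\kk$-point of $\yuniv$ is a tuple $((\lambda_1,\dots,\lambda_n),(v_1,\dots,v_r)) \in \aff^n \times V^{\times r}$, and because each $\lambda_i$ acts on $V$ by a scalar, the spanning condition $\kk[\lambda_1,\dots,\lambda_n]\langle v_1,\dots,v_r\rangle = V$ collapses to the single requirement that $(v_1,\dots,v_r)\neq 0$. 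Hence $\yuniv \simeq \aff^n \times (\aff^r \setminus \{0\})$.

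Next I would use that the fibers of $\pi$ are exactly the $GL(V)=\Gm$-orbits. With the identification above, the $\Gm$-action is trivial on $\aff^n$ and uniform scaling on the punctured $\aff^r$, so the orbit space is $\aff^n \times \mathbb{P}^{r-1}$. Since $\pi$ is smooth and surjective, this realizes $\qu$ as the geometric quotient and yields the isomorphism $\qu \simeq \aff^n \times \mathbb{P}^{r-1}$. A cleaner functorial alternative is to argue directly on the moduli problem: any length one quotient $\mathcal{O}_{\aff^n,T}^{\oplus r} \twoheadrightarrow \mathcal{F}$ is supported along a section $T \to \aff^n$ (giving the first factor) and records a rank one quotient of the trivial bundle $\mathcal{O}_T^{\oplus r}$ (giving the $\mathbb{P}^{r-1}$ factor), producing an isomorphism of functors and hence of schemes.

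Having identified the scheme, the cohomology computation is routine: $H^*(\aff^n) \simeq \mathbb{Z}$ by $\aff^1$-homotopy invariance, $H^*(\mathbb{P}^{r-1}) \simeq \mathbb{Z}[x]/(x^r)$ with $\deg x = 2$, and the Künneth formula gives $H^*(\qu) \simeq \mathbb{Z}[x]/(x^r)$.

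The only step that requires any care is turning the set-theoretic bijection $\qu(\kk) \leftrightarrow \aff^n(\kk) \times \mathbb{P}^{r-1}(\kk)$ into an isomorphism of schemes; this is what the functorial argument above is designed to handle. An alternative is to observe that the $\Gm$-action on $\yuniv \simeq \aff^n \times (\aff^r\setminus\{0\})$ is free with geometric quotient $\aff^n \times \mathbb{P}^{r-1}$, and that $\pi$ is smooth surjective with fibers equal to those orbits, which forces $\qu$ to agree with the quotient as a scheme.
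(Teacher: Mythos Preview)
Your proposal is correct and follows essentially the same idea as the paper. The paper invokes the traceless decomposition $\qu \simeq \aff^n \times \Qnrd$ (Lemma~\ref{rozkladbezsladowe}) and observes that for $d=1$ the traceless locus reduces to parametrizing surjections $(S^{\oplus r})_0 \twoheadrightarrow M$, i.e.\ $\mathbb{P}^{r-1}$; you instead identify $\yuniv \simeq \aff^n \times (\aff^r \setminus \{0\})$ and quotient by $\Gm$, which is the same separation of the scalar endomorphisms from the surjection data, just phrased through $\pi$ rather than through the trace splitting. Your extra care in upgrading the bijection on $\kk$-points to a scheme isomorphism (via the functorial description or the free $\Gm$-quotient) is a welcome addition that the paper's terse argument leaves implicit.
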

\begin{proof}
    Since each $X_j$ is uniquely determined by its trace, $\Qnrd$ introduced in Definition \ref{defQnrd} parametrizes kernels of $\kk$-linear surjections $(S^{\oplus r})_0 \twoheadrightarrow M$. Determining the kernel of $(S^{\oplus r})_0 \twoheadrightarrow M$ is equivalent to specifying a point of $\mathbb{P}^{r-1}$.
\end{proof}

\subsection{\texorpdfstring{The Białynicki-Birula decomposition of $\Hilb_2(\mathbb{A}^n \times \mathbb{P}^{r-1})$}{BB dla hilb2}}

In the rest of this section we study the case $d=2$. We start by proving that the the Hilbert scheme $\Hilb_2(\mathbb{A}^n \times \mathbb{P}^{r-1})$ is semiprojective (Definition \ref{defsemiprojective}), to show that we are able to emply our machinery of the Białynicki-Birula decomposition. Define the action of $\Gm$ on $\mathbb{A}^n \times \mathbb{P}^{r-1}$ with weights $\lambda_1,\dots,\lambda_n$ on the standard coordinates in $\mathbb{A}^n$ and weights $\gamma_1,\dots,\gamma_r$ on the coordinates in $\mathbb{P}^{r-1}$, assume furthermore that
\[ 0<\gamma_1 < \ldots < \gamma_r<\lambda_1 < \ldots < \lambda_n . \]
For any scheme $X$ with an action of $\Gm$ given by $\mu: \Gm \times X \rightarrow X$, a closed subscheme $Z \subset X$, and $t\in\Gm$ we define $t \cdot Z$ to be its image subscheme under the automorphism $\mu(t,-)$. The above induces the action $\mu: \Gm \times \Hilb_2(\mathbb{A}^n \times \mathbb{P}^{r-1}) \rightarrow \Hilb_2(\mathbb{A}^n \times \mathbb{P}^{r-1})$ of the torus on the Hilbert scheme $\Hilb_2(\mathbb{A}^n \times \mathbb{P}^{r-1})$.

\begin{Theorem}
    The Hilbert scheme $\Hilb_2(\mathbb{A}^n \times \mathbb{P}^{r-1})$ with the aforementioned toric action is a semiprojective variety.
\end{Theorem}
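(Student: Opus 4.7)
The plan is to verify the three axioms of Definition \ref{defsemiprojective} in turn: smoothness and quasi-projectivity of $X := \Hilb_2(\mathbb{A}^n \times \mathbb{P}^{r-1})$, properness (in fact finiteness) of the fixed locus $X^{\Gm}$, and existence of $\lim_{t \to 0} t \cdot Z$ for every $Z \in X$. Smoothness is immediate: $\Hilb_2(Y)$ is smooth for any smooth quasi-projective $Y$ (it can be realised as a quotient of the blow-up of $Y \times Y$ along the diagonal by the swap involution), and $\mathbb{A}^n \times \mathbb{P}^{r-1}$ is smooth and quasi-projective.

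To enumerate $X^{\Gm}$, I first note that the fixed points of $\mathbb{A}^n \times \mathbb{P}^{r-1}$ are precisely $(0, e_1), \ldots, (0, e_r)$: positivity of the $\lambda_i$ kills all coordinates on $\mathbb{A}^n$ except the origin, and distinctness of the $\gamma_j$ singles out the $r$ coordinate points of $\mathbb{P}^{r-1}$. A $\Gm$-fixed length-two subscheme is therefore either a reduced pair $\{(0, e_i), (0, e_j)\}$ with $i < j$, or a non-reduced subscheme supported at a single $(0, e_i)$, corresponding to a $\Gm$-invariant line in $T_{(0,e_i)}(\mathbb{A}^n \times \mathbb{P}^{r-1})$. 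The weights on this tangent space are $\lambda_1, \ldots, \lambda_n$ together with $\gamma_j - \gamma_i$ for $j \neq i$; the inequality $\gamma_r < \lambda_1$ gives $|\gamma_j - \gamma_i| < \lambda_k$ for every $k$, so all $n + r - 1$ weights are distinct and each invariant direction is an eigenline. This yields $\binom{r}{2} + r(n + r - 1)$ fixed points, hence $X^{\Gm}$ is finite and a fortiori proper.

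The existence of limits is the only non-formal step and is where I expect the main obstacle to lie. My plan is to embed $X$ as a $\Gm$-equivariant open subscheme of $\bar{X} := \Hilb_2(\mathbb{P}^n \times \mathbb{P}^{r-1})$, extending the $\Gm$-action to $\mathbb{P}^n$ by assigning weight $0$ to the homogenizing coordinate. Since $\bar{X}$ is projective, the valuative criterion uniquely extends the orbit map $t \mapsto t \cdot Z$ from $\Gm$ to a morphism $\mathbb{A}^1 \to \bar{X}$, producing a candidate limit $Z_0 \in \bar{X}$. The subtle point is to check that $Z_0$ already lies in the open subscheme $X \subset \bar{X}$, equivalently that $\Supp(Z_0) \subset \mathbb{A}^n \times \mathbb{P}^{r-1}$. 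I would handle this by observing that every closed point of $\mathbb{A}^n \times \mathbb{P}^{r-1}$ admits a limit at $t = 0$ equal to some $(0, e_i)$ (positivity of the $\mathbb{A}^n$-weights forces the affine coordinates to vanish, while the distinct $\mathbb{P}^{r-1}$-weights select a single $e_i$), so the scheme-theoretic support of the flat closure of the family $\{t \cdot Z\}_{t \in \Gm}$ at $t = 0$ is concentrated at the finite fixed set of $U := \mathbb{A}^n \times \mathbb{P}^{r-1}$ and in particular lies inside $U$. Openness of $X \subset \bar{X}$ then forces $Z_0 \in X$, completing the verification.
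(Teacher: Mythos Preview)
Your proof is correct and follows essentially the same route as the paper: both arguments embed $X$ $\Gm$-equivariantly into the projective $\bar X=\Hilb_2(\mathbb{P}^n\times\mathbb{P}^{r-1})$ by giving the homogenising coordinate weight~$0$, use projectivity of $\bar X$ to produce a limit, and then observe that the support of the limiting subscheme is the limit of the supports, which stays inside $\mathbb{A}^n\times\mathbb{P}^{r-1}$ because all $\lambda_i>0$; the enumeration of the fixed locus is likewise the same. You are in fact slightly more careful than the paper in two places: you explicitly note smoothness of $\Hilb_2$ of a smooth variety (required by Definition~\ref{defsemiprojective} but not spelled out in the paper's proof), and you justify that the tangent weights $\lambda_k$ and $\gamma_j-\gamma_i$ are pairwise distinct via $\gamma_r<\lambda_1$, which is what makes the invariant tangent directions a finite set of eigenlines. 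One cosmetic point: the final sentence ``openness of $X\subset\bar X$ then forces $Z_0\in X$'' is not quite the logical step you want---what you actually use is the characterisation $X=\{[Z]\in\bar X:\Supp(Z)\subset\mathbb{A}^n\times\mathbb{P}^{r-1}\}$, which you have already established by the preceding support argument.
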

\begin{proof}
    Consider the inclusion
\begin{align*}
    \psi: \Hilb_2(\mathbb{A}^n \times \mathbb{P}^{r-1}) \hookrightarrow \Hilb_2(\mathbb{P}^n \times \mathbb{P}^{r-1})
\end{align*}
induced by the inclusion of $\mathbb{A}^n \times \mathbb{P}^{r-1}$ into $\Hilb_2(\mathbb{P}^n \times \mathbb{P}^{r-1})$. Define the action of $\Gm$ on $\mathbb{P}^n \times \mathbb{P}^{r-1}$ with weights $\lambda_0,\dots,\lambda_n$ and $\gamma_1,\dots,\gamma_r$ on the projective coordinates in $\mathbb{P}^n$ and $\mathbb{P}^{r-1}$ respectively, where $\lambda_0=0$ and the other weights are the same as previously. Abusing notation we denote the induced action on $\Hilb_2(\mathbb{P}^n \times \mathbb{P}^{r-1})$ also by $\mu$. Then the inclusion $\psi$ is $\Gm$-equivariant. For a point $[Z] \in \Hilb_2(\mathbb{P}^n \times \mathbb{P}^{r-1})$ we have
\begin{align*}
[Z] \in \Hilb_2(\mathbb{A}^n \times \mathbb{P}^{r-1})\ \text{ if and only if }\ \Supp(Z) \in \mathbb{A}^n \times \mathbb{P}^{r-1}.    
\end{align*}
Since $\lambda_0=0<\lambda_i$ for $i=1,\dots,n$, on the $\kk$-points $(x,y) \in \mathbb{P}^n \times \mathbb{P}^{r-1}$ we have that $\underset{t\rightarrow 0}{\limi}\ t\cdot (x,y) \notin \mathbb{A}^n \times \mathbb{P}^{r-1}$ if and only if $x\notin \mathbb{A}^n \times \mathbb{P}^{r-1}$. Since $\Supp(t\cdot [Z]) = t\cdot \Supp(Z)$ and $\Hilb_2(\mathbb{P}^n \times \mathbb{P}^{r-1})$ is projective (so the limit exists in $\Hilb_2(\mathbb{P}^n \times \mathbb{P}^{r-1})$) we see that all points of $\Hilb_2(\mathbb{A}^n \times \mathbb{P}^{r-1})$ have a limit at $t\in 0$, which lies in $(\Hilb_2(\mathbb{A}^n \times \mathbb{P}^{r-1}))^{\Gm}$.\\ 

We now show that the fixed locus $(\Hilb_2(\mathbb{A}^n \times \mathbb{P}^{r-1}))^{\Gm}$ is proper. Let us denote the coordinates on $\aff^n$ by $x_1,\dots,x_n$, and on $\mathbb{P}^{r-1}$ by $y_1,\dots,y_r$. This induces coordinates $y_{1/i},\dots,y_{r/i}$ on the affine open $U_i:=\{y_i \neq 0\}$. Then since
\[ \Supp(\underset{t\rightarrow0}{\limi}\ t\cdot Z) = \underset{t\rightarrow0}{\limi}\ t\cdot \Supp(Z) \]
the fixed points of the toric action above must consist of subschemes with support lying in
\begin{align*}
(\aff^n \times \mathbb{P}^{r-1})^{\Gm} = \bigcup\limits_{i=1}^r \{ 0 \times [e_i] \}.    
\end{align*}
If the subscheme is reduced, then it is just a union of two points, otherwise, since it has length $2$, it consists of a point in the fixed locus and an invariant tangent vector. The invariant tangent vectors at the points $0 \times [e_i]$ are exactly of the form $y_{j/i}^*$ for $j=1,\dots,r$ ($j\neq i$) or $x_k^*$ for $k=1,\dots,n$, where we denote by $x_k^*$ (respectively $y_{j/i}^*$) the dual vector in the distinguished basis of weight vectors. Thus the fixed locus $(\Hilb_2(\mathbb{A}^n \times \mathbb{P}^{r-1}))^{\Gm}$ is a finite number of points, so it is proper.
\end{proof}

We define a notion of support of a subscheme $Z \subset X$, which is used in our considerations of the $\Gm$-action on $\Hilb_2(\mathbb{A}^n \times \mathbb{P}^{r-1})$. More generally, it can be extended to any quasi-coherent sheaf $\mathcal{F}$ on $X$, with the definition below being the special case $\mathcal{F} = \mathcal{O}_Z$.
\begin{Definition}
    For a subscheme $Z \subset X$ we define $\Supp(Z)$ to be the locus of closed points $x$ of $X$, such that $(\mathcal{O}_Z)_x \neq 0$.
\end{Definition}

We describe the Białynicki-Birula decomposition of $\Hilb_2(\mathbb{A}^n \times \mathbb{P}^{r-1})$, identifying its positive and negative cells. We separate its fixed points into four groups.

\begin{Definition}
    The fixed points of the toric action on $\Hilb_2(\mathbb{A}^n \times \mathbb{P}^{r-1})$ are:
    \begin{enumerate}
        \item[(a)] $\{0 \times [e_i] \} \cup \{ 0 \times [e_j] \}$ for some $i<j$, $i,j\in [r]$.
        \item[(b)] Non-reduced with support $\{0 \times [e_i] \}$ and the tangent vector $y_{j/i}^*$ with $j>i$, $i,j\in [r]$.
        \item[(c)] Non-reduced with support $\{0 \times [e_i] \}$ and the tangent vector $y_{j/i}^*$ with $j<i$, $i,j\in [r]$.
        \item[(d)] Non-reduced with support $\{0 \times [e_i] \}$ and the tangent vector $x_k^*$ for some $k\in [n]$.
    \end{enumerate}
\end{Definition}

First we describe the positive cells of $\Hilb_2(\mathbb{A}^n \times \mathbb{P}^{r-1})$. We consider which
\[ [Z] \in \Hilb_2(\mathbb{A}^n \times \mathbb{P}^{r-1}) \]
have $\underset{t\rightarrow 0}{\limi}\ t\cdot [Z] = [Y]$ for some $[Y]$ in one of the above forms. Denote by $pr_1$, $pr_2$ the projections of $\aff^n \times \mathbb{P}^{r-1}$ to $\aff^n$ and $\mathbb{P}^{r-1}$ respectively.\\
Case 1. $[Y]$ of type $(a)$. Since
\begin{align*}
    \Supp(\underset{t\rightarrow 0}{\limi}\ t\cdot Z) = \underset{t\rightarrow 0}{\limi}\ t\cdot \Supp(Z),
\end{align*}
$[Z]$ must be a reduced scheme consisting of two points $[Z] = z_1 \cup z_2$, with
\begin{align*}
\underset{t\rightarrow 0}{\limi} \ t\cdot z_1  = \{ 0\times [e_i] \}\ \text{ and }\ \underset{t\rightarrow 0}{\limi} \ t\cdot z_2  = \{ 0\times [e_j] \}.
\end{align*}
For $i=1,2$ $pr_1(z_i)\in \aff^n$ can take arbitrary values, while $pr_2(x_i)$
 has to have first $i-1$ (respectively $j-1$) coordinates $0$ and the $i$-th one (respectively $j$-th one) nonzero. Thus they are parametrized by an affine space of dimension
 \begin{align*}
     (n+(r-i))+(n+(r-j)) = 2n+2r-i-j.
 \end{align*}
Case 2. $[Y]$ of type $(b)$. Since the loci of reduced subschemes is open in $\Hilb_2(\mathbb{A}^n \times \mathbb{P}^{r-1})$, if there exists at least one such scheme in some $X_i^+$, then their loci is full dimensional in $X_i^+$, thus it suffices to consider those for the dimension count. Consider $[Z] = z_1\cup z_2$ with $\underset{t\rightarrow 0}{\limi}\ t\cdot [Z]$ in $(b)$, we can assume that they lie on the same affine patch $\aff^n \times U_i$. Clearly the first $i-1$ coordinates of $pr_2(z_1),pr_2(z_2)$ have to be nonzero, and we can normalize the $i$-th to $1$. A limit of a linear form on this affine space is equal exactly to the lowest weight vector appearing with a nonzero coefficient in the expansion of this linear form in the basis $x_1^*,\dots,x_n^*,y_{1/i}^*,\dots,y_{r/i}^*$. Thus the line through $z_1,z_2$ under taking the limit with respect to the action of $\Gm$ is projected onto its coordinate with lowest weight. So the coordinates $i+1,\dots,j-1$ of $pr_2(z_1), pr_2(z_2)$ are equal. Thus the set of such $[Z]$ has dimension
\begin{align*}
    2n+ 2\cdot (r-j+1)+(j-i-1) = 2n+2r-i-j+1.
\end{align*}
Case 3. $[Y]$ of type $(c)$. Assume that a $[Z]$ with limit in $(c)$ is reduced, $[Z] = z_1 \cup z_2$. Then since $\Supp(\underset{t\rightarrow 0}{\limi}\ t\cdot Z) = \underset{t\rightarrow 0}{\limi}\ t\cdot \Supp(Z)$, $pr_2(z_1),pr(z_2)$ have the first $i-1$ coordinates zero. But then the equation $y_{s/i} = 0$ for $s<i$ vanishes on $[Z]$, so also in the limit, but this is clearly not the case. Thus $[Z]$ must be non-reduced. Then $pr_2(\Supp(Z))$ has first $i-1$ coordinates zero. As before, the tangent vector in the limit is projected onto its lowest weight coordinate, so its coefficients of $y_{1/i}^*,\dots,y_{j-1/i}^*$ have to be zero. The dimension of all such $[Z]$ is exactly 
\begin{align*}
    \underbrace{(n+r-i)}_{\text{choice of }\Supp(Z)} + \underbrace{(n+r-j-1)}_{\text{choice of the tangent}} = 2n+2r-i-j-1.
\end{align*}
Case 4. $[Y]$ of type $(d)$. As before it is enough to consider $[Z]$ reduced, $[Z] = z_1 \cup z_2$, and since the line joining $z_1$ with $z_2$ is projected onto its lowest weight nonzero coordinate, then we must have $pr_2(z_1) = pr_2(z_2) \in \mathbb{P}^{r-1}$, and the first $k-1$ coordinates of $pr_1(z_1), pr_1(z_2)$ have to be the same. Since
\[ \Supp(\underset{t\rightarrow 0}{\limi}\ t\cdot Z) = \underset{t\rightarrow 0}{\limi}\ t\cdot \Supp(Z),\]
the first $i-1$ coordinates of $pr_2(x_1), pr_2(x_2)$ have to be zero. It is easy to see that all such $z_1, z_2$ with the $k$-th coordinate of $pr_1(z_1), pr_1(z_2)$ different converge to $0 \times [e_i]$ with tangent vector $x_k^*$. Thus the dimension of all such $[Z]$ is exactly
\begin{align*}
  \underbrace{r-i}_{\text{choice of }pr_2(x_i)} + \underbrace{2\cdot (n-k+1)+ (k-1)}_{\text{choice of }pr_1(x_i)} = 2n + r -i -k+1.  
\end{align*}

While the decomposition above is interesting in its own terms (for example, it yields the number of closed points of $\Hilb_2(\mathbb{A}^n \times \mathbb{P}^{r-1})$ over finite fields), we still need to find the negative cells of the $\Gm$-action. This is because $\Hilb_2(\mathbb{A}^n \times \mathbb{P}^{r-1})$ is semiprojective and the negative cells of its Białynicki-Birula decomposition give us the cell decomposition of the core of $\Hilb_2(\mathbb{A}^n \times \mathbb{P}^{r-1})$. To achieve this we again consider the fixed points of the $4$ types listed above. First note, that for a point $[Z] \in \Hilb_2(\mathbb{P}^n \times \mathbb{P}^{r-1})$,
\begin{align*}
\underset{t\rightarrow \infty}{\limi}\ t\cdot [Z] \in \Hilb_2(\mathbb{A}^n \times \mathbb{P}^{r-1})\ \text{ if and only if }\ pr_1(\Supp(Z)) = 0, 
\end{align*}
as
\begin{align*}
[Z] \in \Hilb_2(\mathbb{A}^n \times \mathbb{P}^{r-1})\ \text{ if and only if }\ \Supp(Z) \in \mathbb{A}^n \times \mathbb{P}^{r-1}.    
\end{align*}
Let us now consider the loci of $[Z] \in \Hilb_2(\mathbb{A}^n \times \mathbb{P}^{r-1})$ converging to fixed points $[Y]$ of each of the four types when $t\rightarrow \infty$. The arguments are very similar to the case of positive cells (except with cases $(b)$ and $(c)$ reversed), but we sketch them nevetherless.\\
Case 1. $[Y]$ of type $(a)$. By the same argument as before $[Z]$ has to be reduced, $[Z] = z_1 \cup z_2$ for $pr_2(z_1)$ with the last $r-i$ coordinates zero, and $pr_2(z_2)$ with the last $r-j$ coordinates zero. Thus the dimension is
\begin{align*}
    (i-1)+(j-1) = i+j-2.
\end{align*}
Case 2. $[Y]$ of type $(b)$. By the same argument as in the case $(c)$ for positive cells we see that $[Z]$ must be non-reduced. $\Supp(Z)$ has to be of the form $0\times pr_2(\Supp(Z))$ with the last $r-i$ coordinates of $pr_2(\Supp(Z))$ zero. The image of the tangent vector under $pr_1$ must be zero, along with its last $r-i$ coordinates of its image under $pr_2$. Thus the dimension of all such $[Z]$ is
\begin{align*}
    \underbrace{(i-1)}_{\text{choice of }\Supp(Z)} + \underbrace{(j-1-1)}_{\text{choice of the tangent vector}} = i+j-3.
\end{align*}
Case 3. $[Y]$ of type $(c)$. As in the case of $(b)$ for the positive cells, for the dimension count it is sufficient to consider only reduced $[Z] = z_1 \cup z_2$. By assumption $pr_1(z_1)=pr_1(z_2)=0$ and since $\Supp(\underset{t\rightarrow 0}{\limi}\ t\cdot Z) = \underset{t\rightarrow 0}{\limi}\ t\cdot \Supp(Z)$, the last $r-i$ coordinates of $pr_2(z_1),pr_2(z_2)$ have to be zero. Furthermore, the $i$-th coordinate of $pr_2(z_1),pr_2(z_2)$ is nonzero, and thus we can consider $z_1,z_2$ in the affine chart $U_i$. Since the line connecting $z_1$ and $z_2$ is projected onto the highest weight coordinate, the coordinates $j+1,\dots,i-1$ in the affine chart $U_i$ have to be the same. All the points of this form with different $j$-th coordinate have a limit at the fixed point $[Y]$ in $(c)$. Thus the dimension of all such possible $[Z]$ is
\begin{align*}
    \underbrace{(i-1)}_{\text{choice of }z_1} + \underbrace{j}_{\text{choice of the first }j\text{ entries of }z_2} = i+j-1.
\end{align*}
Case 4. $[Y]$ of type $(d)$. Since points in $(d)$ don't satisfy all the equations $x_k=0$ for $k=1,\dots,n$, $[Z]$ has to be non-reduced. Clearly the last $r-i$ coordinates of $pr_2(\Supp(Z))$ have to be zero and the $i$-th coordinate has to be nonzero, thus it lies on the distinguished affine $U_i$. The tangent vector can be any vector in the tangent space to a point of $U_i$, whose highest weight coordinate with nonzero coefficient is $x_k^*$. Thus the dimension of the locus of such $[Z]$ is
\begin{align*}
    \underbrace{(i-1)}_{\text{choice of }pr_2(\Supp[Z])} + \underbrace{(r-1+k-1)}_{\text{choice of the tangent vector up to scaling}} = r+i+k-3.
\end{align*}

We see that the core of $\Hilb_2(\mathbb{A}^n \times \mathbb{P}^{r-1})$ is projective, as it is proper by Theorem \ref{coreproper} and is quasi-projective a closed subscheme of a quasi-projective scheme $\Hilb_2(\mathbb{A}^n \times \mathbb{P}^{r-1})$. It is in fact the Hilbert scheme of length two subschemes of a projective scheme $\mathrm{Spec}(k[x_1,\dots,x_n]/(x_1,\dots,x_n)^2) \times \mathbb{P}^{r-1}$. We can thus calculate its cohomology via cell decomposition. For a variety $X$ over $\mathbb{C}$, denote by
\[ H_X(t) := \sum\limits_{i=0}^{\infty} \mathrm{rk_{\mathbb{Z}}}(H^i(X,\mathbb{Z})  ) t^i \]
the Hilbert-Poincaré series of its cohomology ring, where $\mathrm{rk}_{\mathbb{Z}}$ denotes the rank of a $\mathbb{Z}$-module. Let $q=t^2$ be a formal variable of degree $2$. The computation of dimensions of the negative cells above yields the following: 

\begin{Theorem} \label{kohhilbb}
    Let $q=t^2$. The Hilbert-Poincaré series $H_{\Hilb}(q):=H_{\Hilb_2(\mathbb{A}^n \times \mathbb{P}^{r-1})}(q)$ of the Hilbert scheme $\Hilb_2(\mathbb{A}^n \times \mathbb{P}^{r-1})$ is equal to $H_a(q)+H_b(q)+H_c(q)+H_d(q)$, where
    \begin{enumerate}
        \item $H_a(q) = \underset{1\leq i <j \leq r}{\sum} q^{i+j-2}$,
        \item $H_b(q) = \underset{1\leq i <j \leq r}{\sum} q^{i+j-3}$,
        \item $H_b(q) = \underset{1\leq j < i \leq r}{\sum} q^{i+j-1}$,
        \item $H_d(q) = \underset{1\leq i \leq r}{\sum}\ \underset{1\leq k \leq n}{\sum} q^{r+i+k-3}$.
    \end{enumerate}
\end{Theorem}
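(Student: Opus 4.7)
The plan is to assemble the theorem directly from the cell-dimension computations already carried out for the four families of fixed points. First, I would invoke Theorem \ref{coreizo} to reduce the computation to the cohomology of the core $\mathcal{C} \subset \Hilb_2(\aff^n \times \mathbb{P}^{r-1})$. By Theorem \ref{coreproper} the core is proper, and since $\Hilb_2(\aff^n \times \mathbb{P}^{r-1})$ is quasi-projective, so is $\mathcal{C}$; by the stacks project lemma quoted in the preliminaries, $\mathcal{C}$ is therefore projective. This puts us in the classical Białynicki-Birula setting \cite[Theorem 4.2]{BCM02}, where we obtain a decomposition of $\mathcal{C}$ into the negative cells $X_i^-$ attached to the $\Gm$-fixed points.

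Since the fixed locus $(\Hilb_2(\aff^n \times \mathbb{P}^{r-1}))^{\Gm}$ was shown to consist of finitely many points (listed as types (a)--(d)), Lemma \ref{styczne} identifies each $X_i^-$ with an affine space whose dimension equals the dimension of the negative part of the tangent space at the corresponding fixed point. This is exactly what the four case analyses preceding the theorem statement compute; the outcomes are: $i+j-2$ for the fixed points of type (a) with $1\leq i<j\leq r$; $i+j-3$ for those of type (b) with $1\leq i<j\leq r$; $i+j-1$ for those of type (c) with $1\leq j<i\leq r$; and $r+i+k-3$ for those of type (d) with $1\leq i\leq r$, $1\leq k\leq n$.

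Finally, I would use the standard fact that a projective variety admitting a decomposition into affine cells has cohomology concentrated in even degrees, with a free $\mathbb{Z}$-basis indexed by the cells, each cell of complex dimension $d$ contributing one generator in degree $2d$. Summing the contributions $q^{\dim X_i^-}$ across the four families yields precisely $H_a(q)+H_b(q)+H_c(q)+H_d(q)$ as in the statement, giving $H_{\Hilb}(q)$ via the isomorphism $H^*(\Hilb_2(\aff^n\times \mathbb{P}^{r-1})) \simeq H^*(\mathcal{C})$.

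No step looks like a serious obstacle, since the heavy lifting (the four dimension counts, semiprojectivity, and properness of the core) has already been carried out in the preceding pages. The only points requiring any care are the bookkeeping in the four cell-dimension sums and verifying that the negative cells, not the positive ones, give the cell decomposition of the core---this is built into the statement of Theorem \ref{coreizo} and the definition of the core as $\bigcup_i X_i^-$.
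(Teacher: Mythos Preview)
Your proposal is correct and follows essentially the same route as the paper: reduce to the core via Theorem~\ref{coreizo}, note that the core is projective (proper plus quasi-projective), and read off the Hilbert--Poincar\'e series from the affine cell decomposition given by the negative cells, whose dimensions were computed case by case just before the theorem. One small mismatch in framing: the paper obtains those dimensions by directly parametrizing the negative cells rather than by decomposing tangent spaces as in Lemma~\ref{styczne}, but the numbers agree and your reference to ``the four case analyses preceding the theorem statement'' is exactly what the paper uses.
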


We now use the above result to compute explicitly the Hilbert-Poincaré series of $H^*(\Hilb_2(\mathbb{A}^n \times \mathbb{P}^{r-1}))$. First, we have an equality
\begin{align*}
    \underset{1\leq i <j \leq r}{\sum} q^{i+j-2} = \frac{(1+q+\ldots+q^{r-1})^2-(1+q^2+\ldots+q^{2r-2})}{2}
\end{align*}
since each term $q^{i-1} \cdot q^{j-1}$ appears twice in the first product, and there are additional terms of the form $q^{i-1} \cdot q^{i-1}$ for $i,j\in [r]$. This can be further rewritten as
\begin{align*}
    \frac{(q^r-1)^2}{2(q-1)^2} - \frac{q^{2r}-1}{2(q^2-1)} =\frac{q^{2r}-q^{r+1}-q^r+q}{(q-1)^2(q+1)}.
\end{align*}
Summing up the contribution from the first three functions we get
\begin{align*}
    H_a(q)+H_b(q)+H_c(q) = \frac{(q^{2r-1}-q^{r}-q^{r-1}+1)\cdot (1+q+q^2)}{(q-1)^2(q+1)}.
\end{align*}
Second, we have a product factorization for $H_d(q)$
\begin{align*}
    \underset{1\leq i \leq r}{\sum}\underset{1\leq k \leq n}{\sum} q^{r+i+k-3} = q^{r-1} \cdot \Big( \underset{1\leq i \leq r}{\sum}q^{i-1} \Big) \cdot \Big(\underset{1\leq k \leq n}{\sum} q^{k-1} \Big) = q^{r-1} \cdot \frac{q^{r}-1}{q-1} \cdot \frac{q^n-1}{q-1}.
\end{align*}
Combining the two formulas, we get
\begin{align*}
    H_{\Hilb}(q) = \frac{(q^{2r-1}-q^{r}-q^{r-1}+1)\cdot (1+q+q^2)+q^{r-1}\cdot(q^{r}-1)\cdot (q^n-1)}{(q-1)^2(q+1)}.
\end{align*}
We can simplify this fraction to get
\begin{align} \label{formulahilb}
    H_{\Hilb}(q) = \frac{(q^r-1)\cdot (q^{n+r}+q^{n+r-1}+q^{r+1}-q^2-q-1)}{(q-1)^2(q+1)}.
\end{align}

\subsection{\texorpdfstring{Application of the resolution of singularieties of $\qudwa$}{resolution of sing}}
We now examine more carefully the resolution of singularities of $\qudwa$ of Theorem \ref{resolution}. Recall that for any coherent sheaf $\mathcal{E}$ on $\mathbb{A}^n$ we have a morphism of schemes
\begin{align*}
    \rho^2: \Hilb_2(\mathrm{Tot}(\mathbb{P}(\mathcal{E}))) \rightarrow \mathrm{Quot}_{\mathbb{A}^n}^{2}(\mathcal{E})
\end{align*}
induced by mapping a length two subscheme $Z$ of $\mathrm{Tot}(\mathbb{P}(\mathcal{E}))$ to the quotient
\begin{align*}
    \mathcal{E} = p_*\mathcal{O}_{\mathbb{P}(\mathcal{E})}(1) \rightarrow p_*\mathcal{O}_Z(1).
\end{align*}
that is a resolution of singularities. In our case $\mathcal{E} = \mathcal{O}_{\aff^n}^{\oplus r}$, thus $\mathrm{Tot}(\mathbb{P}(\mathcal{E})) \simeq \aff^n \times \mathbb{P}^{r-1}$. We would like to show that we are able to apply the sequence in Theorem \ref{blowupseq} to the morphism above. For this we need to understand the center $Z$ and the exceptional divisor
\[ Z' = Z\times_{\qudwa} \Hilb_2(\aff^n \times \mathbb{P}^{r-1}). \]
\begin{Theorem}
    The singular locus of $\qudwa$ is given by the closed subset where each $X_i$ for $i=1,\dots,n$ acts by a scalar.
\end{Theorem}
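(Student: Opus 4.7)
The plan is to establish the two inclusions separately: points outside the scalar locus $Z := \{[M] : X_i \in \kk \cdot \mathrm{Id} \text{ for all } i\}$ are smooth (via an affine-bundle reduction to $\mathrm{Quot}$ on a smooth curve), while points of $Z$ are singular (via a direct tangent-space computation).

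First, $Z$ is closed in $\qudwa$: the conditions $X_i \in \kk \cdot \mathrm{Id}$ are closed and $GL(V)$-invariant on $\yuniv$, so Proposition \ref{pipreservesclosed} applies. To see that the complement $U := \qudwa \setminus Z$ lies in the smooth locus, I would cover $U$ by the opens $U_i := \{[M] : X_i \text{ non-scalar}\}$ for $i = 1, \ldots, n$. On $U_i$, the centralizer of the non-scalar $2 \times 2$ matrix $X_i$ in $\lendi(V)$ equals the $2$-dimensional subalgebra $\kk[X_i] = \langle \mathrm{Id}, X_i \rangle$; therefore each other $X_j$ decomposes uniquely as $X_j = \alpha_j \mathrm{Id} + \beta_j X_i$ with $\alpha_j, \beta_j \in \kk$. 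The coefficients $(\alpha_j, \beta_j)_{j \neq i}$ are invariant under $GL(V)$-conjugation of $X_i$, so recording them defines a $GL(V)$-equivariant morphism producing an isomorphism $U_i \simeq \aff^{2(n-1)} \times V_i$, where $V_i$ is the open subscheme of $\mathrm{Quot}_2(\mathcal{O}_{\aff^1}^{\oplus r})$ on which $x_i$ acts non-scalarly. Since $\mathrm{Quot}_2(\mathcal{O}_{\aff^1}^{\oplus r})$ is a Quot scheme of a locally free sheaf on a smooth curve, it is smooth, hence so is $U_i$, and therefore so is $U$.

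For the converse direction, given $[M] \in Z$, write $M \simeq \kk_\lambda^{\oplus 2}$ where $\lambda = (\lambda_1, \ldots, \lambda_n) \in \aff^n$ records the scalar values of the $X_i$. Applying Proposition \ref{stycznadoquot} together with the long exact sequence obtained by applying $\Hom_S(-, M)$ to $0 \to K \to S^{\oplus r} \to M \to 0$, one obtains
\[ \dime\, T_{[M]} = 2r - \dime\, \mathrm{End}_S(M) + \dime\, \mathrm{Ext}^1_S(M, M). \]
Since all $X_i$ act by scalars, $\mathrm{End}_S(M) = \mathrm{End}_\kk(M)$ has dimension $4$; moreover $\mathrm{Ext}^1_S(\kk_\lambda, \kk_\lambda) \simeq T_\lambda \aff^n$ has dimension $n$, giving $\dime\, \mathrm{Ext}^1_S(M, M) = 4n$. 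Thus $\dime\, T_{[M]} = 4n + 2r - 4$, which strictly exceeds $\dime\, \qudwa = 2n + 2r - 2$ whenever $n \geq 2$, so $[M]$ is singular.

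The main obstacle I anticipate is rigorously constructing the product decomposition $U_i \simeq \aff^{2(n-1)} \times V_i$ at the level of functors of points (not just closed points), by writing down mutually inverse morphisms and checking they are so functorially. A secondary check is $\dime\, \qudwa = 2n + 2r - 2$, which follows from the existence of a dense open chart identified with an open subset of the symmetric square $\mathrm{Sym}^2(\aff^n \times \mathbb{P}^{r-1})$, parametrizing reduced length-two quotients that are direct sums of two distinct $1$-dimensional quotients of $\mathcal{O}_{\aff^n}^{\oplus r}$. The cases $n = 1$ or $r = 1$ fall outside the main argument but are consistent: $Z$ is then empty (for $r = 1$ no surjection $S \twoheadrightarrow \kk_\lambda^{\oplus 2}$ exists, and for $n = 1$ the Quot scheme is smooth), so the theorem holds vacuously.
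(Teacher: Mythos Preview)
Your proof is correct for $n\geq 2$ and takes a genuinely different route from the paper. The paper argues by induction on $n$: its base case $n=2$ invokes Theorem~\ref{gladkilocusqu}, and the inductive step computes $\dim \Hom_S(K,M)$ directly by decomposing $K$ along the last variable. By contrast, you handle the smooth part in one step via the affine-bundle reduction $U_i \simeq \aff^{2(n-1)}\times V_i$ to the Quot scheme on a curve, and the singular part via the $\mathrm{Ext}$ long exact sequence. Your approach is more conceptual and avoids the induction; the paper's approach has the mild advantage of staying closer to Proposition~\ref{stycznadoquot} without invoking the smoothness of Quot on a curve as a black box. Both arguments ultimately hinge on the same fact that the centralizer of a non-scalar $2\times 2$ matrix is $2$-dimensional.

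One small correction to your final paragraph: for $n=1$ and $r\geq 2$ the locus $Z$ is \emph{not} empty (there are surjections $\kk[x_1]^{\oplus r}\twoheadrightarrow \kk_\lambda^{\oplus 2}$), but the Quot scheme is smooth, so the theorem as literally stated fails in that boundary case. The paper's proof also only treats $n\geq 2$ (its base case is $n=2$), so this is consistent with the intended scope; just rephrase your closing remark to say that the statement should be read for $n\geq 2$.
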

\begin{proof}
    Since the Quot scheme $\qudwa$ is irreducible \cite{JS22} and a simple calculation shows that it has dimension $2n+2r-2$, it is enough to show that the tangent space at a point has dimension $2n+2r-2$ if and only if this point does not lie in the locus above. We prove the claim by induction on $n$. The base case $n=2$ follows from Theorem \ref{gladkilocusqu}, since for $d=2$, $h^0(\mathbb{A}^2,\mathcal{E}nd(\mathcal{Q}_p)) \geq 2$, and a strict inequality occurs if and only if each $X_i$ for $i=1,2$ acts by a scalar. Assume $n>2$ and pick an element
    \[ \phi := [S^{\oplus r} \twoheadrightarrow M] \in \qudwa \]
    such that not all $X_i$ are multiples of the identity. By Theorem \ref{wymdwamacierze} we can assume, after a suitable change of basis, that $X_n$ acts by zero on $M$. Denote by $\tilde{\phi}$ the point $[S^{\oplus r}/(x_nS^{\oplus r}) \twoheadrightarrow M]$, considered as a point of $\quminus$. Denote $K:= \lker(\phi)$ and $\tilde{K} := \lker(\tilde{\phi})$. By induction
    \[ \dime(\Hom_{\kk[x_1,\dots,x_{n-1}]}(\tilde{K},M)) = \dime(\mathcal{T}_{\tilde{\phi}}\quminus) = 2 \cdot (n-1) + 2r -2.\]
    Let us consider $\mathcal{T}_{\phi}\qu = \Hom_S(K,M)$. Note that as a vector space $K = \tilde{K} + x_nS^{\oplus r}$. Specifying an element of $\Hom_S(K,M) = \Hom_S(K/x_nK,M)$
    %= \Hom_S(\frac{S\cdot \tilde{K}}{x_n \cdot \tilde{K}} + \frac{x_nS^{\oplus r}}{x_nS^{\oplus r}+x_n\tilde{K}},M)$
    is equivalent to specifying an element of $\Hom_{\kk[x_1,\dots,x_{n-1}]}(\tilde{K},M)$ and an element of $\Hom_{S}(x_nS^{\oplus r},M)$ that is zero on $x_nK$. The latter is equivalent to specyfing a homomorphism \[ \Hom_{\kk[x_1,\dots,x_{n-1}]}(\kk[x_1,\dots,x_{n-1}]^{\oplus r}/\tilde{K},M) = \Hom_{\kk[x_1,\dots,x_{n-1}]}(M,M). \] This space is exactly $h^0(\aff^{n-1},\mathcal{E}nd(\mathcal{Q}_{\tilde{\phi}}))$, and by Theorem \ref{wymdwamacierze} it has dimension $2$. So 
    \begin{gather*}
        \dime(\mathcal{T}_{\phi}\qu) = \dime(\Hom_S(K,M)) = \dime(\Hom_{\kk[x_1,\dots,x_{n-1}]}(\tilde{K},M)) + 2 = \\ = (2 \cdot (n-1) + 2r -2 )+ 2 = 2n+2r-2.    
    \end{gather*}
    The same inductive reasoning applied to $\phi$ for which all $X_i$ for $i=1,\dots, n$ act by scalar multiplication can be used to prove that
    \[ \dime(\mathcal{T}_{\phi}\qu) > 2n+2r-2. \]
    The distinction lies in that we have the inequality $\dime(\mathcal{T}_{\phi}\qu) > 2r+2$ in the base case $n=2$. Furthermore in the computation of the induction step we have
    \[ \dime(\Hom_{\kk[x_1,\dots,x_{n-1}]}(M,M)) = 4>2 , \]
    since all the vector space endomorphisms of $M$ commute with scalar multiplication.
\end{proof}
%invariant
\begin{Definition}
    Let $Z$ be the singular locus of $\qudwa$. More specifically, by the theorem above, $Z$ is the closed subset defined by the condition
    \[ X_i = c_i \cdot  Id \ \text{ for all } i=1,\dots,n, \text{ some } c_i \in \kk . \]
\end{Definition}
By the product decomposition of $\qu$ in Lemma \ref{rozkladbezsladowe} we clearly have
\begin{align} \label{postacz}
    Z \simeq \aff^n \times \Grass(r,2),
\end{align}
since a morphism $S^{\oplus r} \twoheadrightarrow V$ for a length-two module $V$ with $S_{\geq 1}$ acting by $0$ on $V$ is given exactly by specifying a surjection $(S^{\oplus r})_0 \twoheadrightarrow V$.
\begin{Lemma} \label{ciagdziala}
    The morphism $\rho^2$ above is an abstract blowup with center $Z$.
\end{Lemma}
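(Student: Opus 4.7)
The plan is to verify the two conditions defining an abstract blowup: properness of $\rho^2$ and the existence of an isomorphism
\[ (\Hilb_2(\aff^n \times \mathbb{P}^{r-1}) \setminus Z')_{\mathrm{red}} \simeq (\qudwa \setminus Z)_{\mathrm{red}}, \]
where $Z' = Z \times_{\qudwa} \Hilb_2(\aff^n \times \mathbb{P}^{r-1})$. Properness is immediate from Theorem~\ref{resolution}: $\rho^2$ is a resolution of singularities, hence proper and birational by convention. The substance therefore lies entirely in the isomorphism condition.

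By the theorem immediately preceding this lemma, $\qudwa \setminus Z$ coincides with the smooth locus of $\qudwa$, so in particular it is normal and irreducible. My strategy is to combine bijectivity of $\rho^2$ on $\kk$-points over this open subset with Zariski's Main Theorem: set-theoretic bijectivity forces the fibers to be zero-dimensional, hence finite by properness; a proper quasi-finite morphism is finite; and a finite birational morphism onto a normal variety is an isomorphism. This will yield the desired identification on reduced complements in one stroke.

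For the pointwise bijectivity, fix $[\phi: S^{\oplus r} \twoheadrightarrow M] \in \qudwa \setminus Z$. By Theorem~\ref{wymdwamacierze}, one of two cases occurs: either (a) $M \simeq \kk_x \oplus \kk_y$ with $x \neq y$ in $\aff^n$, in which case composing $\phi$ with each projection onto a residue field gives canonical points $(x, [v_x])$ and $(y, [v_y])$ in $\aff^n \times \mathbb{P}^{r-1}$, and the reduced length-two subscheme $\{(x,[v_x]), (y,[v_y])\}$ is the unique $\rho^2$-preimage; or (b) $M$ is indecomposable, supported at a single point $x \in \aff^n$, with a canonically defined socle $M_1 = \mathfrak{m}_x M$ of length one (nonzero precisely because $[\phi] \notin Z$). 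In case (b) the composite $\phi \to M \twoheadrightarrow M/M_1 \simeq \kk_x$ selects a unique point $(x,[v])$, while the remaining data in $\phi$, encoded via the multiplication map $\mathfrak{m}_x/\mathfrak{m}_x^2 \otimes M/M_1 \to M_1$ together with $\phi$, pins down a single tangent direction at $(x,[v])$, equivalently a unique non-reduced length-two subscheme mapping to $[\phi]$.

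The main obstacle I anticipate is the uniqueness assertion in case (b): I must verify that distinct non-reduced length-two subschemes of $\aff^n \times \mathbb{P}^{r-1}$ supported at $(x,[v])$ yield distinct pushforward modules under $\rho^2$. I plan to dispatch this by passing to an affine chart around $(x,[v])$ where $\aff^n \times \mathbb{P}^{r-1}$ is identified locally with $\aff^{n+r-1}$; length-two non-reduced subschemes at the origin are then parametrized by $\mathbb{P}(T_0 \aff^{n+r-1})$, and a direct local calculation matches this parametrization against the non-split length-two $S$-module extensions of $\kk_x$ by itself specified by $\phi$. Combining cases (a) and (b) delivers set-theoretic bijectivity of $\rho^2$ over $\qudwa \setminus Z$, which together with the Zariski Main Theorem argument above completes the proof.
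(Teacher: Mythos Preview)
Your argument is correct, but it takes a considerably longer route than the paper's. The paper's proof is essentially two lines: by the theorem immediately preceding the lemma, $Z$ is exactly the singular locus of $\qudwa$; since $\rho^{2}$ is a resolution of singularities (Theorem~\ref{resolution}, imported from \cite{S24}), it is an isomorphism away from the singular locus, i.e.\ away from $Z$, and the abstract blowup condition follows at once. No fiberwise case analysis or Zariski Main Theorem is invoked.

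You instead re-establish the isomorphism over $\qudwa\setminus Z$ from scratch: you verify pointwise bijectivity by splitting into the semisimple case $M\simeq\kk_x\oplus\kk_y$ and the local case with nontrivial nilpotent action, then feed bijectivity into the chain ``proper + quasi-finite $\Rightarrow$ finite'' and ``finite birational onto normal $\Rightarrow$ isomorphism''. This is sound and has the virtue of being self-contained---it would work even if Theorem~\ref{resolution} only gave you a proper birational map from a smooth source without the stronger statement that the exceptional locus sits over the singular locus. The trade-off is that you are reproving, in this particular instance, exactly the content that the paper is happy to import from \cite{S24}; and your sketch for uniqueness in case~(b) (matching tangent directions at $(x,[v])$ to the extension data in $\phi$) would benefit from being written out more explicitly if you keep this approach.
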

\begin{proof}
    Since the maximal dimension of a space of endomorphism of a module of length $2$ is $2$, see Theorem \ref{wymdwamacierze}, by Theorem \ref{gladkilocusqu} $Z$ is exactly the singular locus of $\qudwa$. So $\rho^{[2]}$ is an isomorphism away from $Z$ and $(\Hilba \backslash Z')_{\mathrm{red}} \simeq (\qudwa \backslash Z)_{\mathrm{red}}$ for
    \[ Z' = Z\times_{\qudwa} \Hilb_2(\aff^n \times \mathbb{P}^{r-1}). \qedhere \]
\end{proof}
To apply the sequence Theorem \ref{blowupseq} we need to understand the structure of the exceptional divisor $Z'$.
%\begin{Lemma} \label{HilbdrownePd}
%    There is an isomorphism
%     \[\Hilb_d(\mathbb{P}^1) \simeq \mathbb{P}^d. \]
%\end{Lemma}
%\begin{proof}
%    Let $U$ be a fixed two-dimensional vector space and identify $\mathbb{P}^1 \simeq \mathbb{P}(U)$. Furthermore, we can identify
%    \[ \mathbb{P}^d \simeq \mathbb{P}(\mathrm{Sym}^d(U)). \]
%    Then the isomorphism is given by mapping an element
%\begin{align*}
%    f\in \mathbb{P}^d \simeq \mathbb{P}(Sym^d(U))
%\end{align*}
%to
%\begin{align*}
%    [V(f)] \in \Hilb_d(\mathbb{P}(U)) \simeq \Hilb_d(\mathbb{P}^1).
%\end{align*}
%Every codimension one subscheme of $\mathbb{P}^1$ is a finite number of (possibly non-reduced) points, and a homogeonous two polynomial over an algebraically closed field $\kk$ is determined by the set of it zeros, so this gives a one to one correspondence. We omit the proof that it is indeed an isomorphism.
%\end{proof}
\begin{Lemma}
    The resolution $\rho^{[2]}$ restricted to the exceptional divisor
    \[ Z' = Z\times_{\qudwa} \Hilb_2(\aff^n \times \mathbb{P}^{r-1})\]
    is a projective bundle over $Z$ with fiber $\mathbb{P}^2$.
\end{Lemma}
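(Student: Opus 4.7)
The plan is to identify the fibers of $\rho^{[2]}|_{Z'}\colon Z'\to Z$ with $\mathbb{P}^2$ point by point using the classical description of $\Hilb_2(\mathbb{P}^{r-1})$, and then to globalize this picture using the tautological quotient bundle on $\Grass(r,2)$.

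First I would fix a closed point of $Z\simeq \aff^n\times \Grass(r,2)$ corresponding to a pair $(c,q)$ with $c=(c_1,\dots,c_n)\in\aff^n$ and $q\colon \kk^{\oplus r}\twoheadrightarrow U$ a two-dimensional quotient. The corresponding point of $\qudwa$ is the surjection $S^{\oplus r}\twoheadrightarrow U$ with each $x_i$ acting on $U$ by the scalar $c_i$. For any length two subscheme $W\subset \aff^n\times \mathbb{P}^{r-1}$ mapping to this point, the isomorphism $U\cong p_*\mathcal{O}_W(1)$ as $S$-modules forces each $x_i-c_i$ to annihilate $p_*\mathcal{O}_W(1)$; if $W$ had any tangent vector with nonzero $\aff^n$-component, some $x_k-c_k$ would act on $p_*\mathcal{O}_W(1)$ as a nonzero nilpotent, a contradiction. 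Hence $W$ is contained scheme-theoretically in the fiber $\{c\}\times \mathbb{P}^{r-1}$. Using the classical fact that any length two subscheme of $\mathbb{P}^{r-1}$ lies in a unique line $L\cong \mathbb{P}^1$ (the secant line in the reduced case, the line spanned by the tangent direction in the non-reduced case), I obtain a factorization
\[\kk^{\oplus r}=H^0(\mathbb{P}^{r-1},\mathcal{O}(1))\twoheadrightarrow H^0(L,\mathcal{O}_L(1))\xrightarrow{\sim} H^0(W,\mathcal{O}_W(1)),\]
in which the second arrow is an isomorphism because $W$ has length $2$ and $H^0(\mathbb{P}^1,\mathcal{O}(1))$ is two-dimensional. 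The induced two-dimensional quotient of $\kk^{\oplus r}$ is therefore exactly the one corresponding to the line $L$, so the condition $\rho^{[2]}(W)=(c,q)$ is equivalent to $W\subset L_q$ for the specific line $L_q\subset \{c\}\times\mathbb{P}^{r-1}$ determined by $q$. Since $\Hilb_2(\mathbb{P}^1)\cong\mathbb{P}^2$, each closed fiber of $Z'\to Z$ is isomorphic to $\mathbb{P}^2$.

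To obtain a global projective bundle structure, let $\mathcal{Q}$ denote the tautological rank two quotient bundle on $\Grass(r,2)$. The projective bundle $\pi\colon\mathbb{P}(\mathcal{Q})\to \Grass(r,2)$ embeds naturally in $\Grass(r,2)\times \mathbb{P}^{r-1}$ as the universal line, and the relative Hilbert scheme $\mathbb{H}:=\Hilb_2(\mathbb{P}(\mathcal{Q})/\Grass(r,2))$ is a $\mathbb{P}^2$-bundle over $\Grass(r,2)$, isomorphic to $\mathbb{P}(\pi_*\mathcal{O}_{\mathbb{P}(\mathcal{Q})}(2))$. Pulling $\mathbb{H}$ back along the projection $Z=\aff^n\times\Grass(r,2)\to \Grass(r,2)$ yields a $\mathbb{P}^2$-bundle $\widetilde{\mathbb{H}}$ over $Z$. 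The universal length two family on $\widetilde{\mathbb{H}}$ is tautologically a family of length two subschemes of $\aff^n\times\mathbb{P}^{r-1}$, producing a classifying morphism $\widetilde{\mathbb{H}}\to \Hilb_2(\aff^n\times\mathbb{P}^{r-1})$ which, by the fiberwise analysis above, factors through $Z'$ and is bijective on closed points. The main obstacle I expect is promoting this bijection to a scheme-theoretic isomorphism $\widetilde{\mathbb{H}}\cong Z'$: this should follow from the universal properties of the fiber product and the relative Hilbert scheme, but will require either constructing an explicit inverse out of the universal family on $Z'$, or invoking flatness of both sides over $Z$ together with the fiberwise identification to conclude via a Nakayama-type argument.
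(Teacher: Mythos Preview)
Your argument is correct and follows essentially the same route as the paper: both identify each fiber with $\Hilb_2$ of the line $L_q\cong\mathbb{P}^1$ determined by the point of $\Grass(r,2)$, and then invoke $\Hilb_2(\mathbb{P}^1)\cong\mathbb{P}^2$. The only difference is one of packaging: the paper trivializes locally by restricting to the standard open charts $Z_K\subset Z$ (where a fixed $2$-plane $K\subset (S^{\oplus r})_0$ surjects onto the module) and observes directly that $Z'|_{Z_K}\cong Z_K\times\Hilb_2(\mathbb{P}^1)$, whereas you build the global $\mathbb{P}^2$-bundle $\widetilde{\mathbb{H}}=\Hilb_2(\mathbb{P}(\mathcal{Q})/\Grass(r,2))\times_{\Grass(r,2)}Z$ and then match it with $Z'$. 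Your version is in fact more careful on two points the paper leaves implicit: you explain why a subscheme $W$ in the fiber can have no tangent component along $\aff^n$, and you flag the passage from a bijection on closed points to a scheme-theoretic isomorphism --- the local-chart description in the paper handles the latter automatically, and would also resolve the ``main obstacle'' you mention.
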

\begin{proof}
    Consider an open subset $Z_K \subset Z$, where a fixed two-dimensional subspace $K \subset S_0^{\oplus r}$ surjects onto the module. Identifying $\mathbb{P}^{r-1}$ with $\mathbb{P}(S_0^{\oplus r})$ we see that projecting a length two subscheme in $Z'$ (that is, mapping to $Z$ under $\rho^{[2]}$) of $\aff^n \times \mathbb{P}^{r-1}$  to $K$ yields an element of $\Hilb_2(\mathbb{P}^1)$ and the preimage of such an element is uniquely determined by a point of $\aff^n \times \Grass(r,2) \simeq Z$. Thus over the open subset $Z_k$, $Z'$ has a product structure of $Z_k\times \Hilb_2(\mathbb{P}^1)$ with $\rho^{[2]}$ given by the projection to the second coordinate. Finally, $\Hilb_2(\mathbb{P}^1) \simeq \mathbb{P}^2$.
\end{proof}

Thus by the projective bundle formula we see that
\begin{align*}
    H^*(Z') \simeq H^*(\mathbb{P}^2) \otimes H^*(Z) \simeq \mathbb{Z}[q]/(q^3) \otimes H^*(Z).
\end{align*}
Therefore, letting $H_{Z'}(q)$ be the Hilbert-Poincaré series of $H^*(Z')$, which is well defined as a power series in $q$, since $\Grass(r,2)$ has cohomology concentrated in even degrees, we see that
\begin{align*}
    H_{Z'}(q) = H_{\Grass(r,2)}(q) \cdot H_{\mathbb{P}^2}(q) = H_{\Grass(r,2)}(q) \cdot (1+q+q^2).
\end{align*}
By the Formula \ref{postacz} and the Künneth formula we have an isomorphism between the cohomology of $Z$ and the cohomology of the Grassmanian of lines
\begin{align*}
    H^*(Z) \simeq H^*(\Grass(r,2)).
\end{align*}
By Lemma \ref{ciagdziala} we can apply the sequence in Theorem \ref{blowupseq} to the abstract blowup $Y\rightarrow X$ with center $Z$, and we obtain
\begin{gather*}
    \ldots \rightarrow H^i(X) \rightarrow H^i(Y) \oplus H^{i}(Z) \rightarrow H^i(Z') \rightarrow \\
    \rightarrow H^{i+1}(X) \rightarrow H^{i+1}(Y) \oplus H^{i+1}(Z) \rightarrow H^{i+1}(Z') \rightarrow \dots
\end{gather*}
By the above isomorphisms this sequence for $\rho^{[2]}$ is of the form
\begin{gather*}
    \ldots \rightarrow H^i(\qudwa) \rightarrow H^i(\Hilba) \oplus H^{i}(\Grass(r,2)) \rightarrow H^i(\Grass(r,2)) \times \mathbb{P}^2) \rightarrow \\
    \rightarrow H^{i+1}(\qudwa) \rightarrow H^{i+1}(\Hilba) \oplus H^{i+1}(\Grass(r,2)) \rightarrow H^{i+1}(\Grass(r,2)\times \mathbb{P}^2) \rightarrow \dots
\end{gather*}
By Theorem \ref{kohhilbb}, the cohomology of $\Hilba$ is concentrated in even degrees, and the same is true for the Grassmanian and $\mathbb{P}^2$. Thus for the sequence to be exact, $\qudwa$ also has to have cohomology concentrated in even degrees. Furthermore, this implies that the long exact sequence splits into short exact sequences
\begin{align*}
    0 \rightarrow H^i(\qudwa) \rightarrow H^i(\Hilba) \oplus H^{i}(\Grass(r,2)) \rightarrow H^i(\Grass(r,2)) \times \mathbb{P}^2) \rightarrow 0
\end{align*}
Let
\[ H_{\qudwa}(q) = H_{\qudwa}(t^2) \]
be the Hilbert-Poincaré series of $\qudwa$. Combined, the sequences above imply that
\begin{gather}
    H_{\qudwa}(q) = H_{\Hilb}(q) + H_{Z}(q) - H_{Z'}(q)
    = H_{\Hilb}(q) + H_{\Grass(r,2)}(q) - H_{\Grass(r,2)}(q) \cdot (1+q+q^2). \label{formulaqu}
\end{gather}
It is well know that the Hilbert-Poincaré series of $\Grass(r,2)$ is of the form
%= {\ r\  \brack 2}_q
\begin{align*}
    H_{\Grass(r,2)}(q) = \frac{(1+q+\ldots+q^{r-1}) \cdot (1+q+\ldots+q^{r-2})}{1+q} = \frac{(q^r-1)\cdot (q^{r-1}-1)}{(q-1)^2\cdot (q+1)}
\end{align*}
Combining the above with Equation \ref{formulaqu} and Equation \ref{formulahilb} we get
\begin{align*}
    H_{\qudwa}(q) = \frac{(q^r-1)\cdot (q^{n+r}+q^{n+r-1}+q^{r+1}-q^2-q-1)}{(q-1)^2(q+1)} + \\ +\frac{(q^r-1)\cdot (q^{r-1}-1)}{(q-1)^2\cdot (q+1)} - \frac{(q^r-1)\cdot (q^{r-1}-1)}{(q-1)^2\cdot (q+1)} \cdot (1+q+q^2),
\end{align*}
that is
\begin{align*}
    H_{\qudwa}(q) = \frac{(q^r-1)\cdot (q^{n+r}+q^{n+r-1}+q^{r+1}-q^2-q-1) - (q+q^2) \cdot (q^r-1)\cdot (q^{r-1}-1)}{(q-1)^2(q+1)}.
\end{align*}
Reducing the terms on the right-hand side we get
\begin{align*}
    H_{\qudwa}(q) = \frac{(q^r-1)\cdot (q^{n+r}+q^{+r-1}-q^r-1)}{(q-1)^2(q+1)}.
\end{align*}
Grouping together the terms divisible by $q^n$ we get
\begin{align*}
    H_{\qudwa}(q) = \frac{1-q^{2r}}{(1-q^2)(1-q)} + q^{n+r-1} \cdot \frac{q^r-1}{(1-q^2)(1-q)}.
\end{align*}
%Note that
%\begin{align*}
   % H_{\Grass(2,\infty)}(q) = \frac{1}{(1-q^2)(1-q)},
%\end{align*}
%thus in particular for $k < n+r-1$ we have
%\begin{align*}
   % h^k(\qudwa) = h^k(\Grass(2,\infty)).
%\end{align*}
Since the cohomology groups $H^k$ commute with filtered colimits, this implies that
\begin{align*}
    H_{\mathrm{Quot}_2(\mathcal{O}_{\aff^{\infty}}^{\oplus r})} = \frac{1-q^{2r}}{(1-q^2)(1-q)}.
\end{align*}
%It would be interesting to see, if this can be upgraded to an isomorphism of rings, or even an $\aff^1$-homotopy equivalence of $\mathrm{Quot}_2(\mathcal{O}_{\aff^{\infty}}^{\oplus r})$ with an infinite Grassmanian of lines.\\
The above answers in the affirmative the Question \ref{panda} of Pandharipande, since the Hilbert series of the ring $\kk[x_1,x_2]/(x_2^r)$, where the variable $x_i$ lies in degree $2i$ for $i=1,2$ is equal to
\begin{align*}
    (1+q+q^2+\ldots) \cdot (1+q^2+ \dots + q^{2 (r-1)}) = \frac{1-q^{2r}}{(1-q)^2(1+q)}.
\end{align*}
In fact, we have proved the following about the cohomology of $\qudwa$.
\begin{Theorem}
    The Hilbert-Poincaré series of $H^*(\qudwa)$ agrees with the Hilbert-Poincaré series of $\mathbb{Z}[x_1,x_2]/(x_2^r)$ in degrees $\leq 2\cdot (n+r-2)$.
\end{Theorem}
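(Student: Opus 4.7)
The plan is to reduce the theorem to a direct comparison of two Hilbert--Poincar\'e series, both of which have already (essentially) been written down just above the statement. First I would record the series for $\mathbb{Z}[x_1,x_2]/(x_2^r)$ with $\deg x_i = 2i$: since $x_1$ contributes a factor $\tfrac{1}{1-q}$ and $x_2$ (truncated at the $r$-th power) contributes $\tfrac{1-q^{2r}}{1-q^2}$, the series equals
\[
\frac{1-q^{2r}}{(1-q)(1-q^2)}.
\]
This is exactly the first summand appearing in the formula just derived for $H_{\qudwa}(q)$.

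Next I would analyze the ``error term''
\[
E(q) := H_{\qudwa}(q) - \frac{1-q^{2r}}{(1-q)(1-q^2)} = q^{n+r-1}\cdot \frac{q^r-1}{(1-q^2)(1-q)}.
\]
The key observation is that the rational function $\tfrac{q^r-1}{(1-q^2)(1-q)}$, expanded as a power series in $q$, has a non-zero constant term (equal to $-1$). Consequently $E(q)$, viewed as a power series in $q$, begins in degree exactly $n+r-1$. Translating back via $q = t^2$, the series $E(t^2)$ begins in $t$-degree $2(n+r-1)$, and hence vanishes in every $t$-degree strictly smaller than $2(n+r-1)$, i.e.\ in every $t$-degree $\leq 2(n+r-2)$.

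Putting these two steps together, the $t$-coefficients of $H_{\qudwa}(t^2)$ and of the Hilbert--Poincar\'e series of $\mathbb{Z}[x_1,x_2]/(x_2^r)$ coincide in every degree $\leq 2(n+r-2)$, which is precisely the claim. There is no real obstacle here beyond the routine bookkeeping: the substantive input, namely the closed-form expression for $H_{\qudwa}(q)$, was already obtained from the long exact sequence for the abstract blowup $\rho^2$ together with the B{\l}a{\l}ynicki-Birula cell count on $\Hilba$. If I wanted a cleaner argument I could instead verify the agreement by rewriting the difference as
\[
E(q) = -\,q^{n+r-1}\cdot \frac{1-q^r}{(1-q)(1-q^2)},
\]
noting that $\tfrac{1-q^r}{(1-q)(1-q^2)}$ is a genuine polynomial-times-power-series expression with integer coefficients, so that every coefficient of $E(q)$ with $q$-degree less than $n+r-1$ is manifestly zero.
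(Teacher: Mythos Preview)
Your proposal is correct and follows exactly the same route as the paper: the theorem is stated there as an immediate consequence of the already-derived closed form for $H_{\qudwa}(q)$, and you have simply made explicit the observation that the second summand $q^{n+r-1}\cdot\frac{q^r-1}{(1-q^2)(1-q)}$ contributes nothing in $q$-degrees below $n+r-1$. One tiny imprecision: ``strictly smaller than $2(n+r-1)$'' literally means $\leq 2n+2r-3$, not $\leq 2(n+r-2)$; the equivalence holds only because both series are concentrated in even $t$-degrees, which you might note explicitly.
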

Note that this bound actually agrees with the dimension of the Quot scheme $\qudwa$. It would be interesting to see if the equality of Hilbert-Poincaré series of $H^*\big(\mathrm{Quot}_2(\mathcal{O}_{\aff^{\infty}}^{\oplus r})\big)$ and $\mathbb{Z}[x_1,x_2]/(x_2^r)$ can be upgraded to an isomorphism of rings.
\bibliographystyle{alpha}
\bibliography{bibquot2}

\end{document}